\tikzset{
blockR/.style={
  draw, 
  rectangle, 
  minimum height=1.5cm, 
  minimum width=3cm, align=center
  },
blockC/.style={
  draw, 
  circle, 
  minimum height=1.2cm, 
  align=center
  }
}
\renewcommand{\thefigure}{\ifnum \c@section>\z@ \thesection.\fi
 \@arabic\c@figure}
\newcounter{count}[section]\numberwithin{count}{section}
\numberwithin{equation}{section}
\newtheorem{theorem}[count]{Theorem}
\newtheorem{proposition}[count]{Proposition}
\newtheorem{corollary}[count]{Corollary}
\newtheorem{lemma}[count]{Lemma}
\newtheorem{assumption}[count]{Assumption}
\newenvironment{remark}[1][]{
	\par\noindent\refstepcounter{count}\textbf{Remark}
	\ifx\newenvironment#1\newenvironment
		\textbf{\arabic{section}.\arabic{count}.}
	\else
		\textbf{\arabic{section}.\arabic{count}} (#1)\textbf{.}
	\fi
}{
	\leavevmode\unskip\penalty9999 \hbox{}\nobreak\hfill\quad\hbox{{$\diamondsuit$}}
}
\newenvironment{acknowledgements}{\noindent\textbf{Acknowledgements: }}{}
\renewenvironment{abstract}[3][]{
	\def\abstracttoc{#1}
	\def\abstractkeywords{#2}
	\def\abstractmsc{#3}
	\begin{center}\begin{minipage}[c]{.8\textwidth}
	\textbf{Abstract: }
}{
	\abstracttoc\vspace{2ex}
	
	\noindent\textbf{Keywords: }\abstractkeywords
	\\\noindent\textbf{MSC 2010: }\abstractmsc
	\end{minipage}\end{center}
}
\renewenvironment{proof}[1][\proofname]{\par
  \pushQED{\qed}%
  \normalfont \topsep6\p@\@plus6\p@\relax
  \trivlist
  \item[\hskip\labelsep
        \bfseries
    #1\@addpunct{\scantokens{:}}]\ignorespaces
}{%
  \popQED\endtrivlist\@endpefalse
}
\definecolor{darkred}{rgb}{0.9,0.1,0.1}
\renewcommand{\tilde}[1]{\widetilde{#1}}
\newcommand*{\B}{\mathbb{B}}
\DeclareMathOperator{\e}{e}
\newcommand*{\E}{\mathbb{E}}
\newcommand*{\Leb}{\mathbb{L}}
\renewcommand*{\P}{\mathbb{P}}
\newcommand*{\R}{\mathbb{R}}
\DeclareMathOperator{\Id}{Id}
\DeclareMathOperator{\Vect}{Vect}
\newcommand*{\indic}{\mathds{1}}
\DeclareMathOperator{\diag}{diag}
\newcommand*{\I}{\mathbf{I}}
\newcommand*{\X}{\mathbf{X}}
\newcommand*{\Y}{\mathbf{Y}}
\newcommand*{\Z}{\mathbf{Z}}
\newcommand*{\Dir}{\mathscr{D}}
\begin{document}
\title{Fluctuations of the Empirical Measure of\\Freezing Markov Chains}
\def\runtitle{Fluctuations of the Empirical Measure of Freezing Markov Chains}

\author{
	Florian \textsc{Bouguet}
		\and
	Bertrand \textsc{Cloez}
}
\def\runauthor{
	Florian \textsc{Bouguet}, Bertrand \textsc{Cloez}
}

\date{
    \emph{Inria Nancy -- Grand Est, BIGS, IECL}\\
    \emph{MISTEA, INRA, Montpellier SupAgro, Univ. Montpellier}\\[2ex]
}

\pagestyle{main}
\maketitle
\begin{abstract}[
	\tableofcontents
]{
	Markov chain; Long-time behavior; Piecewise-deterministic Markov process; Ornstein-Uhlenbeck process; Asymptotic pseudotrajectory
}{
	60J10; 60J25; 60F05
}
	In this work, we consider a finite-state inhomogeneous-time Markov chain whose probabilities of transition from one state to another tend to decrease over time. This can be seen as a cooling of the dynamics of an underlying Markov chain. We are interested in the long time behavior of the empirical measure of this freezing Markov chain. Some recent papers provide almost sure convergence and convergence in distribution in the case of the freezing speed $1/n^\theta$, with different limits depending on $\theta<1,\theta=1$ or $\theta>1$. Using stochastic approximation techniques, we generalize these results for any freezing speed, and we obtain a better characterization of the limit distribution as well as rates of convergence as well as functional convergence.
\end{abstract}

\section{Introduction}
\label{sec:Intro}
Let $(i_n)_{n \geq 1}$ be an inhomogeneous-time Markov chain with state space $\{1,\dots,D\}$ with the following transitions when $i\neq j$:
\[\P\left(i_{n+1}=j|i_n=i\right)=q_n(i,j),\quad q_n(i,j)= p_n (q(i,j) + r_n(i,j)),\]
where $(p_n)_{n\geq 1}$ is a decreasing sequence converging toward some $p\in [0,1]$, the remainders $r_n(i,j)$ tend to $0$ (fast enough) and $q$ is the discrete generator of some $\{1,\dots,D\}$-valued ergodic Markov chain. This model is related to the simulated annealing algorithm, and the sequence $(p_n)_{n\geq 1}$ can be interpreted as the cooling scheme of an underlying Markov chain generated by $q$. If $p<1$, since $\lim_{n\to+\infty}q_n(i,j)=pq(i,j)$, the probability of $(i_n)_{n\geq1}$ to move decreases over time, from which the appellation \emph{freezing Markov chain}.

The behavior of $(i_n)_{n \geq 1}$ is simple enough to understand, and depends on the summability of the sequence $(p_n)_{n\geq1}$. The chain $(i_n)_{n \geq 1}$ shall converge in distribution to the unique invariant probability $\nu^\top$ associated to $q$ if $\sum_{n=1}^\infty p_n = + \infty$ (see Theorem~\ref{thm:CVMarkovChain} below). On the other hand, if $\sum_{n= 1}^\infty p_n <+\infty$, the Markov chain shall freeze along the way, as a consequence of the Borel-Cantelli Lemma. Then, we shall assume that $\sum_{n=1}^\infty p_n = + \infty$, so that we can investigate the convergence of the empirical distribution $x_n^\top=\frac{1}{n} \sum_{k=1}^n \delta_{i_k}$.

The problem of the convergence of this empirical measure can be traced back to the thesis of Dobru\v{s}in \cite{Dob53}, and several questions are still open, as pointed out in the recent article \cite{EV16}. Some results can be obtained from the general theory developed in \cite{SV05,Pel12}, and \cite{DS07,EV16} study the present model. In particular, convergence results are only obtained for a freezing rate of the form $p_n=a/n^\theta$ (and $r_n(i,j)=0$). More precisely,
\begin{itemize}
\item if $\theta<1$ then $(x_n)_{n\geq1}$ converges to $\nu$ in probability; see \cite[Theorem~1.2]{DS07}.
\item if $\theta<1/2$, then $(x_n)_{n\geq1}$ converges to $\nu$ a.s. This can be extended to $1/2\leq\theta<1$ when the state space contains only two points; see \cite[Theorem~1.2]{DS07} and \cite[Corollary~2]{EV16}.
\item if $\theta<1$ and $D=2$, then, up to an appropriate scaling, the empirical measure $(x_n)_{n\geq1}$ converges in distribution to a Gaussian distribution; see \cite[Theorem~2]{EV16}.
\item if $\theta=1$ then $(x_n)_{n\geq1}$ converges in distribution, and the moments of the limit probability are explicit. If $q$ corresponds to the complete graph (see Section~\ref{sec:Applications}) then this limit probability is the Dirichlet distribution. When $D=2$, this covers classical distribution such as Beta, uniform, Arcsine or Wigner distributions; see \cite[Theorems~1.3 and 1.4]{DS07} and \cite[Theorem~1]{EV16}.
	\item when $D=2$, some convergence results are established for $(x_n)_{n\geq1}$ for general sequences $(p_n)_{n\geq 1}$, under technical conditions that we find hard to check in practice; see \cite[Theorem~3]{EV16}.
 \end{itemize}

We shall refer to the case $\theta<1$ as \emph{standard}, since it is related to classic laws of large numbers and central limit theorems. This case was called \emph{subcritical} in \cite{EV16}, in comparison with the \emph{critical} case $\theta=1$. Since we can slightly generalize this critical case here, the term \emph{non-standard} will be preferred from now on. In the present article, we generalize the aforementioned results by proving that, in the standard case, if $\sum_{n=1}^\infty (p_n n^2)^{-1}<+ \infty$ then $(x_n)_{n\geq1}$ converges to $\nu$ a.s., and we also give weaker conditions for convergence in probability; this is the purpose of Theorem~\ref{thm:StandardAS}. Under slightly stronger assumptions and up to a rescaling, we obtain convergence of $(x_n)_{n\geq1}$ to a Gaussian distribution with explicit variance in Theorem~\ref{thm:Standard}. Finally, if $p_n \sim a/n$, then $(x_n)_{n\geq1}$ converges in distribution exponentially fast to a limit probability (see Theorem~\ref{thm:NonStandard}). This distribution is characterized as the stationary measure of a piecewise-deterministic Markov process (PDMP), possesses a density with respect to the Lebesgue measure and satisfies a system of transport equations; see Propositions~\ref{proposition:ErgodicityEZZ} and \ref{prop:Pi_PDE}. Furthermore, Corollary~\ref{coro:PitoNor} links the standard and non-standard setting by providing a convergence of the rescaled stationary measure of the PDMP to a Gaussian distribution as the switching accelerates. We also investigate the complete graph dynamics in Section~\ref{sec:Applications} and are able to derive explicit results in Propositions~\ref{prop:DirichletEZZ} and \ref{prop:DirichletOU}. Most of our convergence results are also provided with quantitative speeds and functional convergences.

In contrast with the P\'{o}lya Urns model (see for instance \cite{Gou97}), all these results of convergences in distribution are not almost sure. However, note that, by letting $p_n=1$ for all $n\geq 1$, we can recover classical limit theorems for homogeneous-time Markov chains (see \cite{Jon04}). Furthermore, the remainder term $r_n(i,j)$ enables us to deal with different freezing schemes (see Remark~\ref{rk:remainder}). In particular, the proofs in \cite{DS07} and \cite{EV16} are mainly based on the method of moments, which is why more stringent assumptions are considered there. Our approach is completely different, and is based on the theory of asymptotic pseudotrajectories detailed in \cite{Ben99} and revisited in \cite{BBC17}.

Briefly, a sequence is an asymptotic pseudotrajectory of a flow if, for any given time window, the sequence and the flow starting from the same point evolve close to each other (see for instance \cite{BH96,Ben99}). This definition can be formalized for dynamical systems and be extended to discrete sequences of probabilities and continuous Markov semi-groups. This theory allows us to derive the behavior of the sequence of empirical measures $(x_n)_{n\geq1}$ from the one of auxiliary continuous-time Markov processes. The interested reader may find illustrations of this phenomenon in \cite[Figures~3.1, 3.2 and 3.3]{BBC17}, see also Figure~\ref{fig:interpolation}. In the present paper, depending on whether we work in a standard or non-standard setting, these processes are either a diffusive process or a switching PDMP. The careful study of these limit processes is of interest \emph{per se}, and is done in Section~\ref{sec:MarkovProcesses}. More precisely, Gaussian distributions appear naturally since we deal with an Ornstein-Uhlenbeck process generated by
\begin{equation}
\mathcal L_\text{O}f(y)=-y\cdot\nabla f(y)+\nabla f(y)^\top\Sigma^{(p,\Upsilon)}\nabla f(y),
\label{eq:GeneratorOU}
\end{equation}
where $\Sigma^{(p,\Upsilon)}$ is a $D\times D$ real-valued matrix such that
\begin{equation}
\Sigma_{k,l}^{(p,\Upsilon)}= \frac1{1+\Upsilon}\sum_{i=1}^D \nu_i \left[\sum_{j=1}^D q(i,j) (h_{l,j} - h_{l,i})(h_{k,j} - h_{k,i)} ) - p \left( \nu_k-\indic_{i=k} \right) \left(\nu_l-\indic_{i=l}\right)\right] ,\label{eq:DefSigma}
\end{equation}
with $p$ and $h$ respectively defined in Assumption~\ref{assumption:FreezingSpeed}, and in \eqref{eq:DefH}.
On the contrary, we shall see that, in a non-standard framework, the empirical measure is linked to a PDMP, called \emph{exponential zig-zag process}, generated by
\begin{equation}
\mathcal L_\text{Z}f(x,i)=(e_i-x)\cdot\nabla_xf(x,i)+\sum_{j\neq i}aq(i,j)[f(x,j)-f(x,i)].
\label{eq:GeneratorEZZ}
\end{equation}
These Markov processes shall be defined and studied more rigorously in Section~\ref{sec:MarkovProcesses}. In particular, besides some classic long-time properties (regularity, invariant measure, rate of convergence\dots), we prove in Theorem~\ref{sec:EZZtoOU} the convergence of the exponential zig-zag process to the Ornstein-Uhlenbeck process when the frequency of jumps accelerates, i.e. when $a\to+\infty$.

The rest of this paper is organized as follows. In Section~\ref{sec:MainResults}, we specifiy the notation and assumptions mentioned earlier, that will be used in the whole paper. We also state convergence results for $(x_n)_{n\geq1}$, which are Theorems~\ref{thm:NonStandard}, \ref{thm:StandardAS} and \ref{thm:Standard}. We study the long-time behavior of the two auxiliary Markov processes in Section~\ref{sec:MarkovProcesses} and investigate the case of the complete graph in Section~\ref{sec:Applications}, for which it is possible to get explicit formulas. The paper is then concluded with the proofs of the main theorems in Section~\ref{sec:proofs}.

\clearpage
\section{Freezing Markov chains}
\label{sec:MainResults}

\subsection{Notation}
\label{sec:Notation}
We shall use the following notation throughout the paper:
\begin{itemize}
	\item If $d$ is a positive integer, a multi-index is a $d$-tuple $N=(N_1,\dots,N_d)\in (\{0,1,\dots\}\cup\{+\infty\})^d$; the set of multi-indices is endowed with the order $N\leq\widetilde N$ if, for all $1\leq i\leq d,N_i\leq\widetilde N_i$. We define $|N|=\sum_{i=1}^dN_i$ and and we identify an integer $N$ with the multi-index $(N,\dots,N)$. Likewise, for any $x\in\R^d$, let $|x|=\sum_{i=1}^d|x_i|.$
	\item For some multi-index $N$ and an open set $U\subseteq\R^d, \mathscr C^N(U)$ is the set of functions $f:U\to\R$ which are $N_i$ times continuously differentiable in the direction $i$. For any $f\in\mathscr C^N(U),$ we define
	\[f^{(N)}=\partial_{1}^{N_1}\dots\partial_{d}^{N_d}f,\quad\|f^{(N)}\|_\infty=\sup_{x\in\R^d}|f^{(N)}(x)|.\]
	When there is no ambiguity, we write $\mathscr C^N$ instead of $\mathscr C^N(U)$, and denote by $\mathscr C^N_b$ and $\mathscr C^N_c$ the respective sets of bounded $\mathscr C^N$ functions and of compactly supported $\mathscr C^N$ functions.
	\item Let $\triangle$ be the simplex of $\R^D$ defined by
	\[\triangle=\left\{(x_1,\dots,x_D)\in\R^D:x_i\in[0,1],\sum_{i=1}^Dx_i=1\right\},\]
	and $E=\triangle\times\{1,\dots,D\}$.
	\item We denote by $\mathscr L(X)$ the probability distribution of a random vector $X$, and we identify the measures over $\{1,\dots,D\}$ with the $1\times D$ real-valued matrices. Let $\Leb$ be the Lebesgue measure over $\R^D$.
	\item If $\mu,\nu$ are probability measures and $f$ is a function, we write $\mu(f)=\int f(x)\mu(dx)$. For a class of functions $\mathscr F$, we define
	\[d_\mathscr{F}(\mu,\nu)=\sup_{f\in\mathscr F}|\mu(f)-\nu(f)|.\]
	Note that, for every class of functions $\mathscr F$ considered in this paper, convergence in $d_{\mathscr F}$ implies (and is often equivalent to) convergence in distribution (see \cite[Lemma~5.1]{BBC17}). In particular, let
	\[W(\mu,\nu)=\sup_{|f(x)-f(y)|\leq|x-y|}|\mu(f)-\nu(f)|,\quad d_{\text{TV}}(\mu,\nu)=\sup_{\|f\|_\infty\leq1}|\mu(f)-\nu(f)|\]
	be respectively the Wasserstein distance and the total variation distance.
	\item For $\theta\in(0,+\infty)^D,$ let $\Dir(\theta)$ be Dirichlet distribution over $\R^D$, i.e. the probability distribution with probability density function
	\[x\mapsto\frac{\Gamma\left(\sum_{k=1}^D\theta_k\right)}{\prod_{k=1}^D\Gamma(\theta_k)}\prod_{k=1}^Dx_k^{\theta_k-1}\indic_{\{x\in\triangle\}}.\]
	For $\theta_1,\theta_2>0,$ let $\beta(\theta_1,\theta_2)$ be the Beta distribution over $\R$, i.e. the probability distribution with probability density function
	\[x\mapsto\frac{\Gamma(\theta_1+\theta_2)}{\Gamma(\theta_1)\Gamma(\theta_2)}x^{\theta_1}(1-x)^{\theta_2}\indic_{0<x<1}.\]
	\item Let $x\wedge y:=\min(x,y)$ and $x\vee y:=\max(x,y)$ for any $x,y\in\R$.
	\item We write, for $n\geq1,u_n=O(v_n)$ if there exists some bounded sequence $(h_n)_{n\geq1}$ such that $u_n=h_nv_n$. Moreover, if $\lim_{n\to+\infty}h_n=0$, then we write $u_n=o(v_n)$.
\end{itemize}

\subsection{Assumptions and main results}
\label{sec:AssumptionsMainResults}
Let $D$ be a positive integer and $(i_n)_{n\geq1}$ be a $\{1,\dots,D\}$-valued inhomogeneous-time Markov chain such that, $\forall i\neq j$,
\[\P\left(i_{n+1}=j|i_n=i\right)=q_n(i,j).\]
The following assumption, which will be in force in the rest of the paper, describes the behavior of the transitions $q_n$ as time goes by.

\begin{assumption}[Freezing speed]
\label{assumption:FreezingSpeed}
Assume that that the matrix $\Id+q$ is irreducible and, for $n\geq1$ and $i\neq j$,
\begin{equation}
q_n(i,j)=p_n\left(q(i,j)+r_n(i,j)\right),
\label{eq:FreezingSpeed}
\end{equation}
where $(p_n)$ is a sequence decreasing to $p\in[0,1]$ such that $\sum_{n=1}^\infty p_n = + \infty$, and $\lim_{n\to+\infty}r_n(i,j)=0$. For $i\neq j$, assume $q(i,j)\geq0,q_n(i,j)\geq0$ and
\[q(i,i)=-\sum_{j\neq i}q(i,j),\quad q_n(i,i)=-\sum_{j\neq i}q_n(i,j).\]
\end{assumption}

Note that we do not require $(p_n)_{n\geq1}$ to converge to 0. Of course, if $p>0$, then the series $\sum_np_n$ trivially diverges; as pointed out in the introduction, if this series converge then the problem is trivial. In fact, if $p_n=1$ and $r_n(i,j)=0$ for any integers $i,j,n$, then the freezing Markov chain $(i_n)_{n\geq1}$ is a classic Markov chain. When $p=0$, the dynamics of Assumption~\ref{assumption:FreezingSpeed} corresponds to the lazier and lazier random walk introduced in \cite{BBC17}.

\begin{remark}[Irreducibility or indecomposability]
\label{rk:indecomposability}
The irreducibility of the transition matrix $\Id+q$ associated to $q$ is a classic hypothesis when it comes to Markov chains, since otherwise we can split their state space into different recurrent classes. However, the result of the present article can be extended to \emph{indecomposable}\footnote{The algebric term \emph{indecomposable} also exists for matrices, and is sometimes mistaken for irreducibility. Throughout this paper, a Markov chain (or its associated transistion matrix) is said \emph{indecomposable} if it admits a unique recurrent class.} Markov chains, which is a weaker concept. For instance, the transition matrix
\[\left[\begin{array}{ccc}
0&1&0\\
1&0&0\\
1/3&1/3&1/3
\end{array}\right]\]
is indecomposable but not irreducible. Namely, $\Id+q$ is irreducible if it cannot be written as
\[P^\top(\Id+q)P=\left[\begin{array}{ll}
A&0\\B&A'
\end{array}\right],\] 
where $A,A'$ are square matrices and $P$ is a permutation matrix. We could allow such a decomposition, as long as $B$ has a nonzero entry.

In any case, $\Id+q$ possesses a unique absorbing class of states on which it is irreducible. Using Perron-Frobenius Theorem (see \cite[Theorem~2p.53]{Gan59}), the matrix $\Id+q$ possesses a unique invariant measure $\nu^\top$, and the associated chain converges toward it under aperiodicity assumptions (see also Remark~\ref{re:ErgodicityIndecomposability}). Note that aperiodicity hypotheses are not relevant for the freezing Markov chain whenever $p<1$, since the freezing scheme automatically provides aperiodicity to the Markov chain.
\end{remark}

Under Assumption~\ref{assumption:FreezingSpeed}, $\Id+q$ possesses a unique invariant distribution $\nu^\top$, which writes $\nu^\top q=0$; let $\nu\in\triangle$ be its associated vector.

\begin{remark}[Interpretation of the term $r_n(i,j)$]
\label{rk:remainder}
The remainder $r_n(i,j)$ in \eqref{eq:FreezingSpeed} can either model small perturbations of the main freezing speed $p_nq(i,j)$, or a multiscale freezing scheme with $p_n$ being the slowest freezing speed. For instance, the case
\[q_n=\left[
\begin{array}{cc}
-n^{-\theta}&n^{-\theta}\\
n^{-(\theta+\tilde\theta)}&-n^{-(\theta+\tilde\theta)}
\end{array}
\right],\quad \theta,\tilde\theta>0\]
is covered by Assumption~\ref{assumption:FreezingSpeed}, with
\[q=\left[
\begin{array}{cc}
-1&1\\
0&0
\end{array}
\right],\quad p_n=n^{-\theta}.\]
\end{remark}

The following result characterizes the long-time behavior of the inhomogeneous Markov chain $(i_n)_{n\geq1}$.

\begin{theorem}[Convergence of the freezing Markov chain]
\label{thm:CVMarkovChain}
Under Assumption~\ref{assumption:FreezingSpeed}, if either $p<1$, or $p=1$ and $\Id+q$ is aperiodic,
\[\lim_{n\to+\infty}i_n=\nu^\top\text{ in distribution}.\]
\end{theorem}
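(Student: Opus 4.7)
The plan is to analyze directly the law $\mu_n := \mathscr{L}(i_n)$, viewed as a row vector in $\R^D$, which by the Markov property satisfies $\mu_{n+1} = \mu_n(\Id + q_n)$. Writing $\varepsilon_n := \mu_n - \nu^\top$, which belongs to the hyperplane $V := \{v \in \R^D : v\mathbf{1} = 0\}$ (and is preserved by the right-action of $\Id+p_n q$ and $r_n$ since both have zero row sums), and using $\nu^\top q = 0$, one derives the error recursion
\[\varepsilon_{n+1} = \varepsilon_n(\Id + p_n q) + p_n \mu_n r_n.\]

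The core step is to establish that $\Id + p q$ is a strict contraction on $V$. By irreducibility of $\Id+q$ and Perron-Frobenius, the eigenvalue $0$ of $q$ is simple with right eigenvector $\mathbf{1}$, so every eigenvalue $\lambda$ of $q$ restricted to $V$ is non-zero, and $|1+\lambda|^2 \leq 1$ (since $\Id+q$ is stochastic), equivalently $|\lambda|^2 \leq -2\,\mathrm{Re}\,\lambda$. A direct expansion gives
\[|1 + p\lambda|^2 = 1 + 2p\,\mathrm{Re}\,\lambda + p^2|\lambda|^2 \leq 1 + 2p(1-p)\,\mathrm{Re}\,\lambda,\]
which is strictly less than $1$ whenever $p < 1$ (since then $\mathrm{Re}\,\lambda < 0$, as $\mathrm{Re}\,\lambda=0$ would force $\lambda=0$), and also when $p=1$ with $\Id+q$ aperiodic (where $|1+\lambda|<1$ directly). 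Solving a discrete Lyapunov equation on $V$ furnishes an adapted norm $\|\cdot\|_\varphi$ in which $\|\varepsilon(\Id + p_n q)\|_\varphi \leq (1 - c\, p_n)\|\varepsilon\|_\varphi$ holds uniformly for all $n$ large enough, combining the spectral bound with continuity of the spectrum under $p_n \to p$ and the fact that $p_n$ is eventually bounded away from $1$ in the case $p<1$.

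Plugging this into the error recursion yields $\|\varepsilon_{n+1}\|_\varphi \leq (1 - c\, p_n)\|\varepsilon_n\|_\varphi + p_n \eta_n$, where $\eta_n := \|\mu_n r_n\|_\varphi \to 0$ because $\mu_n$ stays in the simplex and $r_n \to 0$. Since $\sum_n p_n = +\infty$, the classical comparison lemma for recursions of the form $a_{n+1} \leq (1 - c_n)a_n + b_n$ with $\sum c_n = \infty$ and $b_n/c_n \to 0$ forces $\|\varepsilon_n\|_\varphi \to 0$, which on the finite state space $\{1,\dots,D\}$ is equivalent to convergence in distribution.

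I expect the main obstacle to be producing the uniform contraction rate $1 - c\, p_n$ in a single norm across all admissible $p_n$: when $p = 0$ the contraction degenerates and one must track the precise $p_n$-dependence, whereas when $p$ is close to (but strictly less than) $1$ the non-trivial eigenvalues of $\Id + p q$ may approach the unit circle, so one must quantify how far from the unit circle they stay. The Lyapunov norm construction on $V$ handles both regimes uniformly, sidestepping any non-diagonalizability of $q$ and converting the pointwise spectral estimate into the required norm-level bound that is stable under the perturbation $r_n \to 0$.
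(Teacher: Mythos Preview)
Your proof is correct and takes a genuinely different route from the paper's. The paper splits into two cases: for $p>0$ it shows that $(\mathscr L(i_n))_{n\geq1}$ is an asymptotic pseudotrajectory of the discrete dynamical system $\mu\mapsto\mu(\Id+pq)$ via the one-step estimate $d_{\text{TV}}\big(\delta_{i_n}(\Id+pq),\delta_{i_n}(\Id+q_n)\big)\leq|p_n-p|+\sum_{j\neq i_n}|r_n(i_n,j)|\to0$, and then invokes Bena\"im's general theory \cite{Ben99} to conclude convergence to the global attractor $\{\nu^\top\}$; for $p=0$ it simply cites \cite[Proposition~3.13]{BBC17}.

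Your argument is more elementary and self-contained: you track the error $\varepsilon_n=\mu_n-\nu^\top$ directly and close with a discrete comparison lemma using $\sum p_n=+\infty$. The key quantitative input is your spectral computation $|1+p\lambda|^2\leq1+2p(1-p)\,\mathrm{Re}\,\lambda$, which shows the contraction rate degrades linearly in $p$; the paper hides this inside Perron--Frobenius. The point you flag as the main obstacle---producing a \emph{single} norm on $V$ with $\|\,\cdot\,(\Id+p_nq)\|_\varphi\leq(1-cp_n)\|\cdot\|_\varphi$ uniformly in $n$---is genuine but tractable precisely because the operators $(\Id+p'q)_{p'}$ all commute: passing to a weighted Jordan basis for $q|_V$ makes the off-diagonal entries $O(p'\epsilon)$ while the diagonal entries satisfy $|1+p'\lambda|\leq1-cp'$ uniformly on $p'\in[0,1-\eta]$, so one norm handles the whole range. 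Your approach treats the cases $p=0$ and $p>0$ uniformly, whereas the paper's is shorter on the page but leans on the pseudotrajectory machinery it reuses for the harder theorems later.
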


Now, let us define $(e_1,\dots,e_D)$ the natural basis of $\R^D$ and introduce two different scaling rates
\begin{equation}
\gamma_n=\frac1n,\quad\alpha_n=\sqrt{\frac{p_n}{\gamma_n}},
\label{eq:DefGammaAlpha}
\end{equation}
and the associated rescaled vectors
\begin{equation}
x_n=\gamma_n\sum_{k=1}^ne_{i_k},\quad y_n=\alpha_n(x_n-\nu).
\label{eq:DefXY}
\end{equation}
It is clear that \eqref{eq:DefXY} writes
\begin{equation}
x_{n+1}=\frac{\gamma_{n+1}}{\gamma_n}x_n+\gamma_{n+1}e_{i_{n+1}},
\label{eq:DefXAlgoSto}
\end{equation}
that the vector $x_n$ belongs to the simplex $\triangle$ and that $(x_n,i_n)\in E=\triangle\times\{1,\dots,D\}$. We highlight the fact that, in general, the sequence $(x_n)_{n\geq1}$ is not a Markov chain by itself, but $(x_n,i_n)_{n\geq1}$ is.

\begin{remark}[Interpretation of $\triangle$]
The transpose $x\mapsto x^\top$ is a natural bijection between $\triangle$ and the set of probability measures over $\{1,\dots,D\}$. Then, the sequence $(x_n^\top)_{n\geq1}$ can be viewed as the sequence of empirical measures of the Markov chain $(i_n)_{n\geq1}$. From that viewpoint, we highlight the fact that the $L^1$ norm over $\triangle$ can be interpreted (up to a multiplicative constant) as the total variation distance: indeed, for any $x,\tilde x\in\triangle,$ 
\[|x-\tilde x|=\frac12d_\text{TV}\left(x^\top,\tilde x^\top\right)=\frac12d_\text{TV}\left(\sum_{i=1}^D x_i\delta_i,\sum_{i=1}^D \tilde x_i\delta_i\right).\]
\end{remark}

\begin{remark}[Weighted means]
Note that one could consider weighted means of the form
\[x_n=\frac{1}{\sum_{k=1}^n \omega_k }\sum_{k=1}^n\omega_k e_{i_k},\]
for any sequence of positive weights $(\omega_n)_{n\geq1}$, as in \cite[Remark~1.1]{BC15} or \cite[Section~3.1]{BBC17}. Then, we define $\gamma_n=\sum_{k=1}^n\omega_k$, and Theorem~\ref{thm:StandardAS} below still holds with the bound
\[\left| x_n- \nu \right|\leq C\exp\left(-v\sum_{k=1}^n\gamma_k\right).\]
\end{remark}

Following \cite{Ben99,BBC17}, and given sequences $(\gamma_n)_{n\geq1},(\epsilon_n)_{n\geq1}$, we define the following parameter which rules the speed of convergence in the context of standard fluctuations:
\begin{equation}
\lambda(\gamma,\epsilon)=-\limsup_{n\to+\infty}\frac{\log(\gamma_n\vee \epsilon_n)}{\sum_{k=1}^n \gamma_k}.
\label{eq:DefLambdaGammaEpsilon}
\end{equation}

Finally, we need to introduce a fundamental tool in the study of the standard fluctuations: the matrix $h$, which is solution of the multidimensional Poisson equation
\begin{equation}
\sum_{j\neq i}q(i,j)(h_j-h_i)=\nu-e_i,\quad\text{or equivalently }\sum_{j\neq i}q(i,j)(h_{k,j}-h_{k,i})=\nu_k-\indic_{i=k}
\label{eq:DefH}
\end{equation}
for all $1\leq i,k\leq D$, where we denoted by $h_i$ the $i$-th column vector of the matrix $h$. This solution is classically defined by
\[h_i= -\int_0^{+\infty} \left(e_i^\top \e^{t(\Id+q)} - \nu^\top \right) dt.\]
With the help of Perron-Frobenius Theorem (see \cite[Theorem~2p.53]{Gan59}), it is easy to see that $h$ is well-defined.

Throughout the paper, we shall treat two different cases, which entail different limit behaviors for the fluctuations of $(x_n)_{n\geq1}$ or $(y_n)_{n\geq1}$. Each of these cases corresponds to one of the two following assumptions.

\begin{assumption}[Non-standard behavior]
\label{assumption:NonStandard}
Assume that
\[p_n\underset{n\to+\infty}\sim\frac an.\]
\end{assumption}

Note that, under Assumption~\ref{assumption:NonStandard}, the sequences $(\gamma_n)_{n\geq1}$ and $(p_n)_{n\geq1}$ are equivalent up to a multiplicative constant and the scaling $(\alpha_n)_{n\geq1}$ is trivial, hence we are not interested in the behavior of $(y_n)_{n\geq1}$.

\begin{assumption}[Standard behavior]
\label{assumption:Standard}
\begin{enumerate}[i)]
	\item Assume that
	\[\limsup_{n\to+\infty}\frac{\gamma_n}{p_n}=0.\]
	\item Assume that
\[\frac{p_{n+1}}{p_n} = 1 + \frac{\Upsilon}{n}+ o\left(\frac{1}{n}\right), \quad \lim_{n \to+\infty} \frac{R_n}{ \sqrt{p_n \gamma_n} } =0,\]
with $R_n= \sup_i \sum_{j\neq i} |r_n(i,j)|$.
\end{enumerate}
\end{assumption}

Now, we have all the tools needed to study the behavior of the empirical measure $(x_n)_{n\geq1}$.

\begin{theorem}[Non-standard fluctuations]
\label{thm:NonStandard}
Under Assumptions~\ref{assumption:FreezingSpeed} and \ref{assumption:NonStandard},
\[\lim_{n\to+\infty}(x_n,i_n)=\pi\text{ in distribution},\]
where $\pi$ is characterized in Propositions~\ref{proposition:ErgodicityEZZ} and \ref{prop:Pi_PDE}.

Moreover, if there exist positive constants $A\geq1,\theta\leq1$ such that
\[\max_{j\neq i}(|r_n(i,j)|)\leq \frac A{n^{\theta}},\]
then, denoting by $\rho$ the spectral gap of $\Id+q$, for any
\[u<\frac\theta{A+\theta\left(1+\frac1{a\rho}\right)},\]
there exist a class of functions $\mathscr F$ defined in \eqref{eq:defF} and a positive constant $C$ such that
\[d_{\mathscr F}\left(\mathscr L(x_n,i_n),\pi\right)\leq Cn^{-v}.\]
\end{theorem}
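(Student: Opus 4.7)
The plan is to apply the asymptotic pseudotrajectory (APT) framework revisited in \cite{BBC17}. Rewriting \eqref{eq:DefXAlgoSto} as
\[x_{n+1}-x_n=\gamma_{n+1}(e_{i_{n+1}}-x_n),\]
one recognizes a Robbins--Monro-type recursion. Under the rescaled clock $\tau_n=\sum_{k=1}^n\gamma_k\sim\log n$, the jump rate of $i_n$ per unit rescaled time is $p_n/\gamma_n\to a$ under Assumption~\ref{assumption:NonStandard}, while $x$ relaxes towards $e_i$ at unit rate. The natural candidate limit is therefore the exponential zig-zag process with generator $\mathcal L_\text{Z}$ from \eqref{eq:GeneratorEZZ}.

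To implement this, for a smooth test function $f:E\to\R$ I would Taylor-expand $f(x_{n+1},i_{n+1})-f(x_n,i_n)$ around $(x_n,i_n)$ and decompose it as
\[\gamma_{n+1}\mathcal L_\text{Z}f(x_n,i_n)+\Delta M_{n+1}(f)+\eta_{n+1}(f),\]
where $\Delta M_{n+1}(f)$ is a martingale increment and $\eta_{n+1}(f)$ is a deterministic remainder bounded, up to derivatives of $f$, by $\gamma_{n+1}(\gamma_{n+1}+R_{n+1}+|p_{n+1}/\gamma_{n+1}-a|)$. This matches the framework of \cite[Section~5]{BBC17} and lets me conclude that $(\mathscr L(x_n,i_n))_{n\geq1}$ is an APT of the EZZ Markov semigroup. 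Combined with the exponential ergodicity of $\mathcal L_\text{Z}$ established in Proposition~\ref{proposition:ErgodicityEZZ}, this yields the qualitative convergence $\mathscr L(x_n,i_n)\to\pi$.

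For the quantitative rate, I would iterate the one-step estimate between indices $m<n$ to compare $\mathscr L(x_n,i_n)$ with the law obtained by running the EZZ semigroup for the rescaled time $\tau_n-\tau_m$ from $\mathscr L(x_m,i_m)$. Under the polynomial assumption $R_n\leq An^{-\theta}$, the APT error accumulates as $\sum_{k=m+1}^n\gamma_k\cdot O(k^{-\theta})$, while the EZZ semigroup contracts at rate $a\rho$, contributing $\e^{-a\rho(\tau_n-\tau_m)}\approx (m/n)^{a\rho}$. Picking $m$ of the form $n^{1-\beta}$ for a suitable $\beta$ and balancing the two contributions yields the stated exponent
\[v<\frac{\theta}{A+\theta(1+1/(a\rho))}.\]
The delicate point I expect to be the main obstacle is the choice of the class $\mathscr F$ in \eqref{eq:defF}: the semigroup of the EZZ process does not preserve all smoothness, so one has to restrict to test functions whose derivatives are controlled along the flow, and it is this restriction---together with the presence of the constant $A\geq1$ in the remainder bound---that produces the particular form of the exponent rather than the naive rate $n^{-\theta}$.
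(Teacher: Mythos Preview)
Your approach is essentially the paper's: both apply the asymptotic-pseudotrajectory machinery of \cite{BBC17} by comparing the discrete generator of $(x_n,i_n)$ to $\mathcal L_\text{Z}$ and then invoking the ergodicity of Proposition~\ref{proposition:ErgodicityEZZ}. A few points of execution differ. First, the paper works directly with the conditional-expectation operator $\mathcal L_n f(x,i)=\gamma_{n+1}^{-1}\E[f(x_{n+1},i_{n+1})-f(x,i)\mid x_n=x,i_n=i]$ and bounds $|\mathcal L_n f-\mathcal L_\text{Z}f|$ by $\chi_{N_1}\|f^{(N_1)}\|_\infty\,O(\epsilon_{n+1})$ with $\epsilon_n=n^{-1}\vee\max_{j\neq i}|r_n(i,j)|$; no martingale increment is needed, since the APT in \cite{BBC17} is formulated at the level of laws. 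Second, the ``delicate point'' you flag---stability of the class $\mathscr F$ under the semigroup---is not left as an obstacle but handled explicitly by differentiating the SDE \eqref{eq:SdeEZZ} in its initial condition, which gives $\partial_k\X^{x,i}_t=\e^{-t}e_k$, $\partial_k\partial_l\X^{x,i}_t=0$, and hence $\|(P_tf)^{(j)}\|_\infty\leq\|f^{(j)}\|_\infty$ for $j\leq N_1$. Third, the contraction rate fed into the balancing is $v<a\rho/(1+a\rho)$ from Proposition~\ref{proposition:ErgodicityEZZ}, not $a\rho$ as you wrote; combined with $\lambda(\gamma,\epsilon)=\theta/A$ via \cite[Theorem~2.8.ii)]{BBC17} this produces exactly the stated exponent $u<(A/\theta+1/v)^{-1}=\theta/(A+\theta(1+1/(a\rho)))$, without redoing the $m=n^{1-\beta}$ optimization by hand.
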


It should be noted that our approach for the study of the long-time behavior of $(x_n,i_n)_{n\geq 0}$ also provides functional convergence for some interpolated process $(X_t,I_t)_{t\geq0}$ defined in \eqref{eq:interpolatedprocess} (see Lemma~\ref{lem:APTNonStandard}, from which Theorem~\ref{thm:NonStandard} is a straightforward consequence). Moreover, note that the speed of convergence provided by Theorem~\ref{thm:NonStandard} writes, for any function $f:\triangle\times\{1,\dots,D\}\to\R$, two times differentiable in the first variable, there exists a constant $C_f$ such that
\[|\E[f(x_n,i_n)]-\pi(f)|\leq C_f n^{-v}.\]

\begin{remark}[Is it possible to generalize Assumption~\ref{assumption:NonStandard}?]
This remarks leans heavily on the proof of Theorem~\ref{thm:NonStandard} and may be omitted at first reading. It is interesting to wonder whether it is possible to obtain non-standard fluctuations for a more general freezing speed $(p_n)_{n\geq1}$. To that end, let us try to mimic the computations of the proof of Lemma~\ref{lem:APTNonStandard} with $(\tilde x_n,i_n)_{n\geq1}$ with
\[\tilde x_n=\tilde\gamma_n\sum_{k=1}^ne_{i_k},\]
for any vanishing sequences $(\gamma_n)_{n\geq1}$ and $(\tilde\gamma_n)_{n\geq1}$. Our method being based on asymptotic pseudotrajectories, the limit of the rescaled process of $(x_n,i_n)_{n\geq1}$ belongs to a certain class of PDMPs which can be attained if, and only if,
\begin{equation}
\lim_{n\to+\infty}\frac{p_n}{\gamma_n}=C_1,\quad \lim_{n\to+\infty}\frac{\tilde\gamma_n}{\gamma_n}=C_2,\quad \lim_{n\to+\infty}\frac{1}{\gamma_n}\left(\frac{\tilde\gamma_{n+1}}{\tilde\gamma_n}-1\right)=-C_3,
\label{eq:ConditionNonStandard}
\end{equation}
with $C_1,C_2,C_3>0$. Without loss of generality, one can choose $\gamma_n=\tilde\gamma_n$ and $C_2=C_3=1$. Then, the third term of \eqref{eq:ConditionNonStandard} entails $\gamma_n=(n+o(1))^{-1}$ as $n\to+\infty$, which in turn implies $p_n=C_1n^{-1}+o(n^{-1})$ when injected in the first term of \eqref{eq:ConditionNonStandard}.

Also, note that assuming $A<1$ or $\theta>1$ in Theorem~\ref{thm:NonStandard} would not provide better speeds of convergence, since one would obtain a speed of the form
\[u<\frac{\theta\wedge1}{A\vee1+\theta\wedge1\left(1+\frac1{a\rho}\right)}.\]
\label{rk:ConditionNonStandard}
\end{remark}

\begin{theorem}[Standard convergence of the empirical measure]
\label{thm:StandardAS}
Under Assumptions~\ref{assumption:FreezingSpeed} and \ref{assumption:Standard}.i),
\[\lim_{n\to+\infty}x_n=\nu\text{ in probability},\]
or equivalently in $L^1$.

Moreover, if $\sum_{n=1}^\infty\gamma_n^2p_n^{-1} <+ \infty,$ then $\lim_{n\to+\infty}x_n=\nu$ a.s.

Moreover, if $\ell= \lambda(\gamma, \gamma/p)\wedge\lambda(\gamma, R) >0,$ then, for any $v<\ell$ there exists a (random) constant $C>0$ such that
\[\left| x_n- \nu \right|\leq \frac{C}{n^v}\text{ a.s.}\]
\end{theorem}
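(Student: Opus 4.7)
The plan is to treat $(x_n)_{n\geq 1}$ as a Robbins--Monro stochastic approximation scheme for the ODE $\dot z = \nu - z$, and apply the asymptotic pseudotrajectory results of \cite{BBC17}. Since $\gamma_n = 1/n$ satisfies $\gamma_{n+1}/\gamma_n = 1-\gamma_{n+1}$, the recursion~\eqref{eq:DefXAlgoSto} rewrites as
\[
x_{n+1} - x_n = \gamma_{n+1}(\nu - x_n) + \gamma_{n+1}(e_{i_{n+1}} - \nu),
\]
so the mean field is $F(x) = \nu - x$, which generates the linear flow $\Phi_t(x) = \nu + e^{-t}(x-\nu)$, a strict contraction onto the equilibrium $\nu$ at Lyapunov rate $1$.

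The key step is a Poisson-type decomposition of the noise. Using the solution $h$ of~\eqref{eq:DefH} together with the transitions $\P(i_{n+1}=j \mid i_n=i) = p_n(q(i,j)+r_n(i,j))$, a direct computation gives
\[
e_{i_n} - \nu \;=\; \frac{1}{p_n}\bigl(h_{i_n} - h_{i_{n+1}}\bigr) + \frac{1}{p_n}\Delta M_{n+1} + r_n^{(h)},
\]
where $\Delta M_{n+1} := h_{i_{n+1}} - \E[h_{i_{n+1}} \mid \mathcal F_n]$ is a bounded vector-valued martingale increment whose conditional second moment is $O(p_n)$ (since $i_{n+1}$ differs from $i_n$ only with probability $O(p_n)$), and the deterministic term $r_n^{(h)} = \sum_{j \neq i_n} r_n(i_n,j)(h_j - h_{i_n})$ has size $O(R_n)$. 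Substituted into the recursion and unrolled using the explicit contraction $\prod_{k=2}^{n}(1-\gamma_k) = 1/n$, the error $\delta_n := x_n - \nu$ appears as $(1/n)$ times a sum of three pieces of respective typical sizes $\gamma_n/p_n$ (telescoping, after Abel summation), $\gamma_n/p_n$ in quadratic mean (martingale, since $(\gamma_n/p_n)^2 \cdot p_n = \gamma_n^2/p_n$), and $\gamma_n R_n$ (deterministic).

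Each statement of the theorem then follows from a general result in the spirit of \cite[Theorem~2.6]{BBC17}. Convergence in probability (equivalent to $L^1$ convergence since $\triangle$ is compact) holds because every noise contribution above vanishes under Assumption~\ref{assumption:Standard}.i). The almost sure convergence under $\sum_n \gamma_n^2/p_n < +\infty$ follows from Doob's theorem applied to the correctly rescaled noise martingale, whose predictable quadratic variation is exactly of order $\sum_n \gamma_n^2/p_n$; the telescoping piece is handled by Abel summation (using $\gamma_n/p_n \to 0$) and the remainder by boundedness. Finally, the quantitative bound $|x_n - \nu| \leq C n^{-v}$ for $v < \ell$ is obtained by matching the flow's discrete contraction $1/n$ against the polynomial decay of the two noise scales $\gamma_n/p_n$ and $R_n$, which are precisely what define $\ell = \lambda(\gamma, \gamma/p) \wedge \lambda(\gamma, R)$. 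The main technical obstacle is upgrading the $L^2$ control of the martingale contribution to the stated almost sure polynomial rate, which requires a maximal inequality (e.g.\ Burkholder--Davis--Gundy) combined with the exponential contraction of the flow---exactly what the abstract theorem of \cite{BBC17} is designed to package.
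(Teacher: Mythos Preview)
Your approach is essentially the one the paper takes: write the recursion as a Robbins--Monro scheme for the linear ODE $\dot z=\nu-z$, decompose the noise $e_{i_{n+1}}-\nu$ via the Poisson solution $h$ of \eqref{eq:DefH} into a telescoping piece, a martingale increment with conditional second moment of order $p_n$, and an $O(R_n)$ remainder, then invoke asymptotic pseudotrajectory theory.

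Two points of bookkeeping deserve attention. First, the noise in the recursion is $e_{i_{n+1}}-\nu$, not $e_{i_n}-\nu$; your decomposition is written for the latter, and the paper's decomposition \eqref{eq:proofOccupation1} is correspondingly shifted (this costs only an extra telescoping term). Second, and more substantively, you oscillate between two different packagings of the argument. In the ``direct unrolling'' you describe, $\delta_n=\frac1n\sum_{k\leq n}(e_{i_k}-\nu)$ and the relevant martingale is $\sum_{k\leq n}p_k^{-1}\Delta M_{k+1}$, whose bracket is $\sum_k 1/p_k$, \emph{not} $\sum_k\gamma_k^2/p_k$. The bracket $\sum_k\gamma_k^2/p_k$ that you (correctly) identify as controlling the a.s.\ convergence arises only in the \emph{windowed} pseudotrajectory formulation, where one bounds $\Delta(t,T)=\sup_{0\leq h\leq T}\bigl|\sum_{k=m(t)}^{m(t+h)}\gamma_{k+1}(e_{i_{k+1}}-\nu)\bigr|$ and the martingale carries the factor $\gamma_{k+1}/p_k$. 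The paper works entirely in this second formulation, using \cite[Proposition~4.1]{Ben99} for the pseudotrajectory criterion, Doob's maximal inequality window by window, Borel--Cantelli for the a.s.\ rate, and finally \cite[Theorem~6.9 and Lemma~8.7]{Ben99} to transfer the flow's exponential rate $1$ into the bound $|x_n-\nu|\leq Cn^{-v}$ for $v<\ell$. The reference you cite, \cite[Theorem~2.6]{BBC17}, concerns convergence in distribution toward Markov semigroups and is not the right black box here; the relevant abstract machinery is \cite{Ben99}.
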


\begin{theorem}[Standard fluctuations]
\label{thm:Standard}
Under Assumptions~\ref{assumption:FreezingSpeed} and \ref{assumption:Standard}, $(y_n)_{n\geq1}$ converges in distribution to the Gaussian distribution $\mathscr N\left(0,\Sigma^{(p,\Upsilon)}\right)$
\end{theorem}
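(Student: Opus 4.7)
The plan is to apply the asymptotic pseudotrajectory (APT) machinery of \cite{Ben99,BBC17}, following the same template as for the non-standard Theorem~\ref{thm:NonStandard}, but with a diffusive (Ornstein-Uhlenbeck) limit in place of a PDMP one: identify a suitable continuous-time interpolation of $(y_n)_{n\geq 1}$ as an APT of the semigroup generated by $\mathcal{L}_\text{O}$ in \eqref{eq:GeneratorOU}, and then exploit the exponential ergodicity of this OU semigroup toward its unique invariant Gaussian $\mathscr{N}(0,\Sigma^{(p,\Upsilon)})$ to transfer the convergence onto the discrete sequence.

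The first step is to recast \eqref{eq:DefXAlgoSto} as a stochastic approximation recursion for $y_n$. Since $\gamma_n=1/n$ gives $\gamma_{n+1}/\gamma_n=1-\gamma_{n+1}$, the ``mean-drift'' boundary term vanishes, leaving
$$y_{n+1} = \frac{\alpha_{n+1}\gamma_{n+1}}{\alpha_n\gamma_n}\,y_n + \alpha_{n+1}\gamma_{n+1}\bigl(e_{i_{n+1}}-\nu\bigr),$$
and a Taylor expansion using Assumption~\ref{assumption:Standard}.ii) yields $\alpha_{n+1}\gamma_{n+1}/(\alpha_n\gamma_n) = 1+\gamma_{n+1}(\Upsilon-1)/2+o(\gamma_{n+1})$. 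To expose the martingale structure hidden in $e_{i_{n+1}}-\nu$, I would invoke the Poisson equation \eqref{eq:DefH}: writing $H(i):=h_i$ and $\Delta M_{n+1}:=H(i_{n+1})-\E[H(i_{n+1})\,|\,\mathcal{F}_n]$, one obtains
$$e_{i_n} - \nu = \frac{H(i_n)-H(i_{n+1})}{p_n} + \frac{\Delta M_{n+1}}{p_n} + \varepsilon_n, \qquad |\varepsilon_n|=O(R_n).$$

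Substituting this decomposition and performing an Abel summation on the telescoping term $H(i_n)-H(i_{n+1})$ converts the formally diverging factor $1/p_n$ into a bounded linear-in-$y_n$ drift (arising from the summation-by-parts boundary contributions), while the remainder is negligible thanks to Assumption~\ref{assumption:Standard} (the crucial bounds $\gamma_n/p_n\to0$ and $R_n/\sqrt{p_n\gamma_n}\to0$). Combining the resulting drift with the contribution $(\Upsilon-1)/2$ from the first display, the recursion can be put into an Euler-type form
$$y_{n+1} = y_n + \gamma_{n+1}\,L\,y_n + \sqrt{\gamma_{n+1}}\,\eta_{n+1} + o\bigl(\sqrt{\gamma_{n+1}}\bigr),$$
where $L$ and the conditional covariance of the martingale increments $\eta_{n+1}$ match the drift and diffusion coefficients of $\mathcal{L}_\text{O}$. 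The hard part will be this bookkeeping: the prefactor $1/(1+\Upsilon)$ and the correction $-p(\nu_k-\indic_{i=k})(\nu_l-\indic_{i=l})$ in \eqref{eq:DefSigma} both emerge from the Abel boundary contributions combined with the fact that $p$ may be positive, so recovering exactly the matrix $\Sigma^{(p,\Upsilon)}$ requires careful tracking of all $o(\gamma_n)$ terms and a consistent reconciliation of the effective drift rate with the time scale of the OU limit.

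Once the stochastic approximation recursion is in this form, standard second-moment estimates on $\eta_{n+1}$ and on the remainder (in the spirit of the APT criteria of \cite[Theorem~2.6]{BBC17}, adapted from the PDMP to the diffusive setting) show that the suitable continuous-time interpolation of $(y_n)_{n\geq 1}$ is an asymptotic pseudotrajectory of the semigroup $(\e^{t\mathcal{L}_\text{O}})_{t\geq 0}$. Since this OU semigroup converges exponentially fast to its unique invariant Gaussian $\mathscr{N}(0,\Sigma^{(p,\Upsilon)})$, the APT property transfers this convergence onto $(y_n)$, yielding the claimed distributional convergence.
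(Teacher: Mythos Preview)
Your proposal takes a genuinely different route from the paper. The paper does \emph{not} reuse the asymptotic-pseudotrajectory machinery of \cite{BBC17} here; it explicitly switches to the classical SDE method for stochastic approximations, invoking \cite[Th\'eor\`eme~4.II.4]{Duf96}. After the same Poisson-equation decomposition \eqref{eq:proofOccupation1} that you describe, the paper casts the recursion in Duflo's canonical form $y_{n+1}=y_n+\hat\gamma_{n+1}(h(y_n)+\hat r_{n+1})+\sqrt{\hat\gamma_{n+1}}\,\epsilon_{n+1}$ with a rescaled step $\hat\gamma_{n+1}=\tfrac{1+\Upsilon}{2}\,\gamma_{n+1}^2\alpha_{n+1}^2/p_n$, linear drift $h(z)=-z$, and martingale increments whose conditional covariance is computed directly in \eqref{eq:thebrackets} to recover $\Sigma^{(p,\Upsilon)}$ (this is where the $-p(\nu_k-\indic_{i=k})(\nu_l-\indic_{i=l})$ correction and the $(1+\Upsilon)^{-1}$ prefactor arise, not from any Abel boundary contributions). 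The remainder $\hat r_n$ is not driven to zero pointwise; it is split as $\hat r^{(1)}_n+\hat r^{(2)}_n$ with $\hat r^{(1)}_n\to0$ and the telescoping piece $\hat r^{(2)}_n$ controlled only in the weaker averaged sense needed for Duflo's diffusion-approximation argument. What the SDE method buys over your plan is that tightness of $(y_n)$ and the functional diffusion approximation come packaged together in the black box.

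Your APT-based plan could in principle be made to work, but two points need repair. First, the sentence claiming that ``Abel summation on the telescoping term \ldots\ converts the formally diverging factor $1/p_n$ into a bounded linear-in-$y_n$ drift'' is not correct: the entire linear drift on $y_n$ already comes from the ratio $\alpha_{n+1}\gamma_{n+1}/(\alpha_n\gamma_n)$ that you computed, while the telescoping/Abel manipulation only renders the $H(i_n)-H(i_{n+1})$ contribution asymptotically negligible (this is exactly the paper's $\hat r^{(2)}$). Second, to run the \cite{BBC17} criteria in the diffusive regime you need uniform moment bounds on $(y_n)_{n\ge1}$; these were free in the non-standard case by compactness of $E$, but here $y_n\in\R^D$ is unbounded and tightness is part of what must be established before the APT comparison can even be set up.
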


The precise proofs of the main results are deferred to Section~\ref{sec:proofs}. As pointed out in the introduction, our proofs of Theorems~\ref{thm:NonStandard} and \ref{thm:Standard} rely on comparing $(x_n)_{n\geq1}$ and $(y_n)_{n\geq1}$ with auxiliary continuous-time Markov processes, using the theory of asymptotic pseudotrajectories and the SDE method. Then, these discrete Markov chains will inherit some properties of the Markov processes that we shall prove in Section~\ref{sec:MarkovProcesses}. In particular, the results we use provide functional convergence of the rescaled interpolating processes to the auxiliary Markov processes (see \cite[Theorem~2.12]{BBC17} and \cite[Th\'{e}or\`{e}me~4.II.4]{Duf96}).

\begin{remark}[Examples of freezing rates]
\label{rk:StandardFreezingRates}
For the sake of simplicity, consider $r_n(i,j)=0$ for all $i,j,n$.  Assumption~\ref{assumption:Standard} covers sequences $(p_n)_{n\geq1}$ of the form $p_n=n^{-\theta}$ for any $0<\theta<1$, since $\gamma_n^2p_n^{-1}=n^{\theta-2}$. In this case, $\ell=\lambda(n^{-1},n^{\theta-1})= 1 - \theta>0$.

But we can also consider more exotic freezing rates, for instance $p_n =\log(n)^\zeta n^{-1},$ for some $\zeta\geq1$. Then, $\gamma_n^2p_n^{-1}=n^{-1} \log(n)^{-\zeta}$. If $\zeta>1$, then the series converges and $\ell=1$. Our results do not provide almost sure convergence in the case $\zeta=1$, however, but only convergence in probability.

It should be noted that assuming that $(p_n)_{n\geq1}$ is decreasing, $\lim_{n\to+\infty}p_n =0$ and $\sum_{n=1}^\infty p_n=+\infty$ do not imply in general that $p_{n+1}\sim p_n$. A slight modification of the proof shows that, if $p_{n+1}$ is not equivalent to $p_n$, we have to assume the existence of a sequence $(\beta_n)_{n\geq1}$ such that
\[\lim_{n\to+\infty} \frac{p_n}{\gamma_n^2 \beta_{n}^2} \left( \frac{\beta_{n}}{\beta_{n-1}}(1-\gamma_n) - 1 \right) =-1,\quad \sum_{n =1}^\infty \frac{\gamma_n^2 \beta_{n}^2}{p_n}= + \infty,\quad\lim_{n\to+\infty} \beta_n\gamma_n =-1\]
and such that the sequences $(\gamma_n^2 \beta_{n}^2p_n^{-1})_{n \geq 1}$ and $(\beta_{n} \gamma_n)_{n\geq1}$ are decreasing; then the conclusion of Theorem~\ref{thm:Standard} holds.
\end{remark}

\section{The auxiliary Markov processes}
\label{sec:MarkovProcesses}
In this section, we study the ergodicity of the processes arising as limits of the freezing Markov from Section~\ref{sec:MainResults}. We also study their invariant measure, and provide explicit formulas when it is possible.

\subsection{The exponential zig-zag process}
\label{sec:EZZprocess}
In this section, we investigate the asymptotic properties of the exponential zig-zag process, which arise from the non-standard scaling of the Markov chain $(i_n)_{n\geq1}$. To this end, let $(\X_t,\I_t)_{t\geq0}$ be the strong solution of the following SDE (see \cite{IW89}), with values in $E$:
\begin{equation}
(\X_t,\I_t)=(\X_0,\I_0)+\int_0^t \left(A(\X_{s^-},\I_{s^-})+e_{\I_{s^-}}\right)ds+\sum_{j=1}^D\int_0^t B_{\I_{s^-},j}(\X_{s^-},\I_{s^-})N_{\I_{s^-},j}(ds),
\label{eq:SdeEZZ}
\end{equation}
where the $N_{i,j}$ are independent Poisson processes of intensity $aq(i,j)\indic_{\{i\neq j\}}$ and
\begin{equation}
A=\left[
\begin{array}{cccc}
-1&0&\cdots&0\\
0&\ddots&\ddots&\vdots\\
\vdots&\ddots&-1&0\\
0&\cdots&0&0
\end{array}
\right],\qquad B_{i,j}\left[
\begin{array}{cccc}
&&&0\\
&(0)&&\vdots\\
&&&0\\
0&\cdots&0&i-j
\end{array}
\right].
\label{eq:SdeEzz2}
\end{equation}
Thus, the infinitesimal generator of this process is $\mathcal L_\text{Z}$ defined in \eqref{eq:GeneratorEZZ} (see e.g. \cite{EK86,Dav93,Kol11}). Actually, the exponential zig-zag process is a PDMP; the interested reader can consult \cite{Dav93,BLBMZ15} for a detailed construction of the process $(\X,\I)$. Let us describe briefly its dynamics: setting $\I_0=i$, the process possesses a continuous component $\X$ which is exponentially attracted to the vector $e_i$. The discrete component $\I_t$  is piecewise-constant, and jumps from $i$ to $j$ following the epochs of the processes $N_{i,j}$, which in turn leads the continuous component to be attracted to $e_j$ (see Figure~\ref{fig:EZZ} for sample paths of the exponential zig-zag process, and Figure~\ref{fig:EZZTurnover} for a typical path in the framework of Section~\ref{sec:Turnover}).

The following result might be seen as a direct consequence of \cite[Theorem~1.10]{BLBMZ12} or \cite[Theorem~1.4]{CH15}, although these articles do not provide explicit rates of convergence, which are useful for instance in the proof of Corollary~\ref{coro:PitoNor}. 

\begin{figure}[htbp!]
\begin{center}
\includegraphics[width=0.329\textwidth]{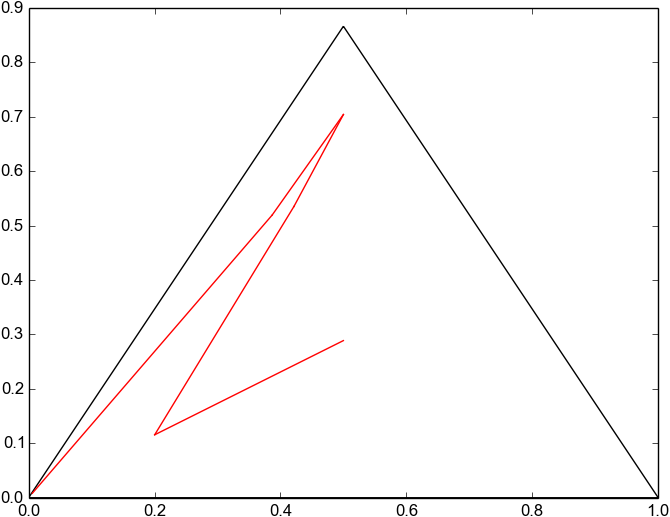}
\includegraphics[width=0.329\textwidth]{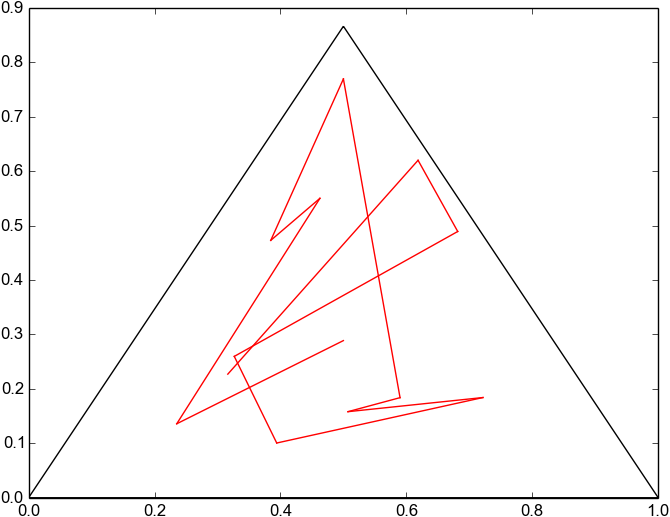}
\includegraphics[width=0.329\textwidth]{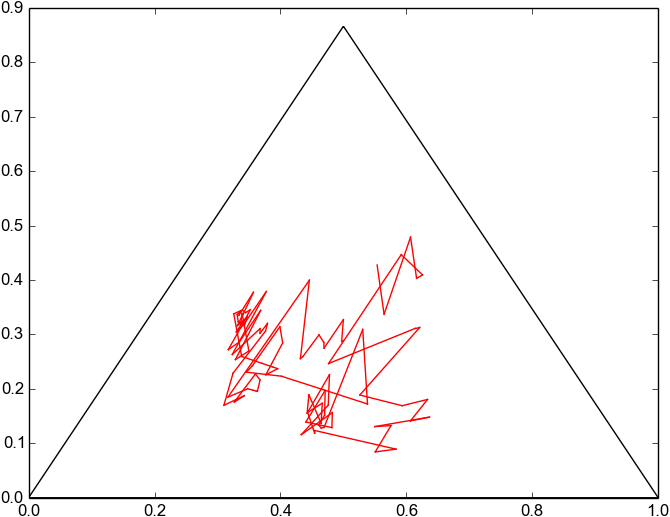}
\end{center}
\caption{Sample paths on $[0,5]$ of the exponential zig-zag process for $X_0=(1/3,1/3,1/3),q(i,j)=1$ and $a=0.5,a=2,a=20$ (from left to right).}
\label{fig:EZZ}\end{figure}

\begin{proposition}[Ergodicity]\label{proposition:ErgodicityEZZ}
The exponential zig-zag process $(\X_t,\I_t)_{t\geq0}$  admits a unique stationary distribution $\pi$. If $\rho$ is the spectral gap of $q$, then for any for any $v<a\rho (1+a\rho)^{-1}$, there exists a constant $C>0$ such that
\[\forall t\geq 0, \quad W\left((\X_t,\I_t),\pi\right)\leq C\e^{-vt}.\]

Moreover, if $\mathscr L(\I_0)=\nu^\top$, then
\[\forall t\geq 0, \quad  W\left((\X_t,\I_t),\pi\right)\leq W\left((\X_0,\I_0),\pi\right)\e^{-t}.\]
\end{proposition}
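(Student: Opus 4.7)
The plan is to use a pathwise coupling argument tailored to the product structure of $\mathcal L_\text{Z}$. Since the discrete component $\I$ evolves autonomously as a continuous-time Markov chain with generator $aq$, independently of $\X$, I would construct two copies $(\X,\I)$ and $(\tilde\X,\tilde\I)$ of the exponential zig-zag process on the same probability space by coupling the discrete parts via a classical Markov coupling on $\{1,\dots,D\}$. The spectral gap of $aq$ being $a\rho$, standard arguments for finite irreducible chains yield a coupling time $\tau=\inf\{t\geq0:\I_t=\tilde\I_t\}$ with exponential tails $\prob(\tau>s)\leq C_0\e^{-a\rho s}$ for some constant $C_0>0$; after $\tau$, the driving Poisson families are chosen identical, so that $\I_t\equiv\tilde\I_t$ for every $t\geq\tau$.

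The continuous components satisfy $d\X_t=(e_{\I_t}-\X_t)\,dt$ and likewise for $\tilde\X$, which integrates by variation of constants as $\X_t=\e^{-t}\X_0+\int_0^t\e^{-(t-s)}e_{\I_s}\,ds$. Subtracting and using $|e_{\I_s}-e_{\tilde\I_s}|=2\indic_{\I_s\neq\tilde\I_s}\leq 2\indic_{s<\tau}$ gives
\[|\X_t-\tilde\X_t|\leq\e^{-t}|\X_0-\tilde\X_0|+2\int_0^t\e^{-(t-s)}\indic_{s<\tau}\,ds\leq\e^{-t}|\X_0-\tilde\X_0|+2\e^{-(t-\tau)^+}.\]
Taking expectations and splitting the event $\{\tau\leq\alpha t\}$ from its complement for an arbitrary $\alpha\in(0,1)$ yields $\E[\e^{-(t-\tau)^+}]\leq\e^{-(1-\alpha)t}+2C_0\e^{-a\rho\alpha t}$, and the choice $\alpha=1/(1+a\rho)$ produces the announced rate $a\rho/(1+a\rho)$. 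Combined with $\prob(\I_t\neq\tilde\I_t)\leq\prob(\tau>t)\leq C_0\e^{-a\rho t}$, this yields $W((\X_t,\I_t),(\tilde\X_t,\tilde\I_t))\leq C\e^{-vt}$ for any $v<a\rho/(1+a\rho)$, from which the existence and uniqueness of $\pi$ follow by a Cauchy argument in the complete space of probability measures on the compact set $E$ endowed with $W$.

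For the improved bound when $\mathscr L(\I_0)=\nu^\top$, I would use that the $\I$-marginal of $\pi$ must equal $\nu^\top$, since the autonomous chain $\I$ has unique invariant distribution $\nu^\top$. An optimal coupling of $(\X_0,\I_0)$ with a random variable of law $\pi$ can then be chosen with $\tilde\I_0=\I_0$ almost surely; driving both copies by the same Poisson processes on $[0,+\infty)$ enforces $\I_t\equiv\tilde\I_t$ for every $t$, and the continuous components satisfy exactly $|\X_t-\tilde\X_t|=\e^{-t}|\X_0-\tilde\X_0|$. The main technical difficulty is constructing the discrete Markov coupling realising the spectral-gap rate with an explicit constant $C_0$; this is classical for finite irreducible chains but must be carried out carefully to track the constant, which in turn propagates to the constant in the final Wasserstein bound.
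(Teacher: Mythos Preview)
Your proposal is correct and follows essentially the same strategy as the paper: couple the autonomous discrete components $\I,\tilde\I$ via the spectral gap of $aq$, then exploit the exponential contraction of the flow $x\mapsto e_i-x$ once they coincide, optimizing a time-split at $\alpha=(1+a\rho)^{-1}$. The only cosmetic difference is that the paper couples $\I,\tilde\I$ at the deterministic time $\alpha t$ through a total-variation bound (with an $\varepsilon$-slack coming from Perron--Frobenius) rather than via the random coupling time $\tau$, and the second assertion is handled identically by taking $\I_0=\tilde\I_0$.
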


Note that the speed of convergence provided in Proposition~\ref{proposition:ErgodicityEZZ} can be improved when $D=2$, since we are able to use more refined couplings (see Proposition~\ref{proposition:ErgodicityEZZ2}).

\begin{proof}[Proof of Proposition~\ref{proposition:ErgodicityEZZ}]
The pattern of this proof follows \cite{BLBMZ12}. Let $(\X_t,\I_t, \tilde{\X}_t, \tilde{\I}_t)_{t\geq0}$ be the coupling for which the discrete components $\I$ and $\tilde \I$ are equal forever once they are equal once. Let $t>0$ and $\alpha\in(0,1)$. Firstly, note that, if $\I_{\alpha_t}=\tilde \I_{\alpha_t}$, then the processes always have common jumps and
\begin{equation}
|\X_t-\tilde \X_t|=|\X_{\alpha_t}-\tilde \X_{\alpha_t}|\e^{-t}\leq 2\e^{-t}.
\label{eq:proofErgodicityEZZD1}
\end{equation}
From the Perron-Frobenius theorem (see \cite{Gan59,Sal97}), for any $\varepsilon>0$, there exists $\tilde C>0$ such that
\[d_\text{TV}(\I_t,\tilde \I_t)\leq \tilde C\e^{-(a\rho-\varepsilon) t}.\]
Then there exists a coupling of the random variables $\I_{\alpha t}$ and $\tilde \I_{\alpha t}$ such that
\begin{equation}
\P(\I_{\alpha t}\neq\tilde \I_{\alpha t})\leq \tilde C\e^{-(a\rho-\varepsilon)\alpha t}.
\label{eq:proofErgodicityEZZD2}
\end{equation}
Now, combining \eqref{eq:proofErgodicityEZZD1} and \eqref{eq:proofErgodicityEZZD2},
\begin{align*}
\E\left[|(\X_t,\I_t)-(\tilde \X_t,\tilde \I_t)|\right]&\leq\E\left[\left.|(\X_t,\I_t)-(\tilde \X_t,\tilde \I_t)|\right|\I_{\alpha t}\neq\tilde \I_{\alpha t}\right]\P(\I_{\alpha t}\neq\tilde \I_{\alpha t})\\
&\quad+\E\left[\left.|(\X_t,\I_t)-(\tilde \X_t,\tilde \I_t)|\right|\I_{\alpha t}\neq\tilde \I_{\alpha t}\right]\P(\I_{\alpha t}\neq\tilde \I_{\alpha t})\\
&\leq 2\P(\I_{\alpha t}\neq\tilde \I_{\alpha t})+2\e^{-(1-\alpha)t}\\
&\leq 2\tilde C\e^{-(a\rho-\varepsilon)\alpha t}+2\e^{-(1-\alpha)t}.
\end{align*}
One can optimize this speed of convergence by taking $\alpha=(1+a\rho-\varepsilon)^{-1}$, and get
\begin{equation}
W\left((\X_t,\I_t),(\tilde\X_t,\tilde\I_t)\right)\leq C\e^{-vt}
\label{eq:proofErgodicityEZZD3}
\end{equation}
with $C=2\tilde C+2$ and $v=(a\rho-\varepsilon)(1+a\rho-\varepsilon)^{-1}$. Then, $(\mathscr L((\X_t,\I_t))$ is a Cauchy sequence and converges to a (stationary) distribution $\pi$. Letting $\mathscr L(\tilde\X_0,\tilde\I_0)=\pi$ in \eqref{eq:proofErgodicityEZZD3}, achieves the proof in the general case.

Now, if $\mathscr L(\I_0)=\nu^\top$, then $\mathscr L(\I_0)=\mathscr L(\tilde \I_0)$; we can let $\I_0=\tilde \I_0$, and then it suffices to use \eqref{eq:proofErgodicityEZZD1} with $\alpha=0$.
\end{proof}

If Assumption~\ref{assumption:FreezingSpeed} is in force, there exists a unique invariant measure $\pi$, which satisfies
$$
\int_E \mathcal{L}_\text{Z} f(x,i) \pi(dx,di)=0,
$$
for any function $f$ smooth enough. Now, let us establish the absolute continuity of this invariant distribution with respect to the Lebesgue measure $\Leb$.

\begin{lemma}[Absolute continuity of the exponential zig-zag process]
\label{lemma:DensityEZZ}
Let $K\subset \mathring{\triangle}$ be a compact set. There exist constants $t_0,c_0>0$ and a neighborhood $V$ of $K$ such that, for any $(x,i)\in E$ and for all $t\geq t_0$,
\begin{equation}
\P(\X_t\in \cdot,\I_t=j|\X_0=x,\I_0=i) \geq c_0 \Leb (\cdot \cap V(y)).
\label{eq:abscont-traj}
\end{equation}
\end{lemma}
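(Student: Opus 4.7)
The plan is to prove the lemma via a controllability plus change-of-variables argument, which is standard for PDMPs and follows the approach of \cite{BLBMZ15}. The key point is that the drift fields $F_i(x)=e_i-x$ are affine with contractive flow
\[\phi^i_s(x)=e^{-s}x+(1-e^{-s})e_i,\]
so after a sequence of jumps $i=i_0\to i_1\to\dots\to i_N$ with holding times $s_1,\dots,s_{N+1}$ (total duration $t_0=\sum_\ell s_\ell$), the position is the explicit affine map
\[\Psi(x;s_1,\dots,s_{N+1})=\phi^{i_N}_{s_{N+1}}\circ\cdots\circ\phi^{i_0}_{s_1}(x)=e^{-t_0}x+\sum_{\ell=0}^N\lambda_\ell(s)\,e_{i_\ell},\]
with explicit nonnegative weights $\lambda_\ell$ summing to $1-e^{-t_0}$.

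First I would fix a reference point $y^*\in\mathring{\triangle}$ (close to $K$) and a starting index $i\in\{1,\dots,D\}$. Using irreducibility of $\Id+q$, I pick a directed path $(i_0=i,i_1,\dots,i_N)$ with $q(i_\ell,i_{\ell+1})>0$ for every $\ell$ and such that $\{i_0,\dots,i_N\}=\{1,\dots,D\}$ (so all vertices are visited). For this path and a generic interior choice $(s^*_1,\dots,s^*_{N+1})$ with $\sum s^*_\ell=t_0$ large enough, I would compute the Jacobian of $(s_1,\dots,s_N)\mapsto\Psi(x;s_1,\dots,s_N,t_0-\sum_{\ell\leq N}s_\ell)$ and verify that it has rank $D-1$ on the affine hull of $\triangle$. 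The Jacobian reduces to linear combinations of the differences $\{e_{i_\ell}-e_{i_m}\}$, which span the tangent hyperplane of $\triangle$ precisely because the path visits every vertex; this is the affine version of the Lie-bracket condition $[F_i,F_j]=e_j-e_i$. Hence $\Psi$ is a local submersion onto the affine hull, and its image contains an open neighborhood of $y^*+e^{-t_0}x-y^*_0$ for suitable $y^*_0$.

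Next I would use the explicit PDMP transition formula: for any Borel box $B\subset\R^{N+1}_+$ around $(s^*_1,\dots,s^*_{N+1})$, the event that the chain of jumps $i_0\to\dots\to i_N$ occurs with switching times in $B$ has probability at least
\[\prod_{\ell=0}^{N-1}\bigl(aq(i_\ell,i_{\ell+1})\bigr)\cdot e^{-at_0\|q\|_\infty}\cdot\Leb(B),\]
coming from the Poisson clock densities and the survival factor, both bounded below on $B$. Conditional on this event, $\X_{t_0}$ equals $\Psi$, so the pushforward under $\Psi$ (whose Jacobian is bounded below on $B$ by the rank condition) produces an absolutely continuous component of $\mathscr L(\X_{t_0},\I_{t_0})$ with density at least $c_0>0$ on an open set $V_{x,i}$. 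The dependence on $x$ enters only via the contractive term $e^{-t_0}x$, so choosing $t_0$ large uniformly shrinks its influence, and the finite cardinality of $\{1,\dots,D\}$ (finitely many initial paths to select) allows me to take a common $V$. Finally, a compactness argument on $K$ (covering $K$ by finitely many such $V_{y^*}$ obtained by varying the reference point and the holding times $s^*$) produces a single neighborhood $V$ of $K$ on which the uniform lower bound holds. The extension from $t=t_0$ to $t\geq t_0$ follows from the Markov property:
\[\P_{(x,i)}(\X_t\in A,\I_t=j)\geq\int\P_{(x,i)}(\X_{t-t_0}\in dx',\I_{t-t_0}=i')\,\P_{(x',i')}(\X_{t_0}\in A,\I_{t_0}=j),\]
and the inner bound is uniform in $(x',i')$.

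The main obstacle is the Jacobian computation in the first step: although conceptually it is just the affine Hörmander condition, one has to verify non-degeneracy of an explicit $(D-1)\times N$ matrix whose entries are products of $e^{-s_\ell}$ and differences of vertex vectors, and then argue that it remains bounded below on a neighborhood of the chosen configuration. Secondary technical care is needed to make $V$ independent of $(x,i)$: this relies on choosing $t_0$ large so that the contracting prefix $e^{-t_0}x$ is negligible in the image of $\Psi$, combined with a finite selection of controlled paths for the $D$ possible initial indices.
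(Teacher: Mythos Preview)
Your proposal is correct and follows essentially the same route as the paper: both verify the H\"ormander-type bracket condition for the switching fields $F^i(x)=e_i-x$ (the differences $e_i-e_j$ span the tangent hyperplane of $\triangle$) and then combine this with compactness to obtain the uniform minorization. The paper invokes \cite[Theorems~4.2 and 4.4]{BLBMZ15} as a black box for the local submersion/pushforward step and uses compactness of $\triangle$ to get uniformity in the starting point, whereas you unpack that machinery explicitly and exploit the contractive prefactor $e^{-t_0}x$ for the same uniformity---a minor variation rather than a different argument.
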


\begin{remark}[When $\Id+q$ is only indecomposable]
\label{re:ErgodicityIndecomposability}
This remark echoes Remark~\ref{rk:indecomposability} and describes the behavior of the Markov chain $(x_n,i_n)_{n\geq1}$ when $\Id+q$ is reducible but indecomposable. In that case, Proposition~\ref{proposition:ErgodicityEZZ} holds as well. However, $\Id+q$ possesses a unique recurrent class which is strictly contained in $\{1,\dots,D\}$, the vector $\nu$ possesses at least one zero and belongs to the frontier of the simplex $\triangle$, and $\pi(\mathring\triangle)=0$. It is then impossible to obtain an equivalent to Proposition~\ref{proposition:ErgodicityEZZ} with a convergence in total variation; when $\Id+q$ is irreducible, this is possible using techniques inspired from \cite[Proposition~2.5]{BMPPR15}.

If $\Id+q$ is indecomposable, one can obtain equivalents of Lemma ~\ref{lemma:DensityEZZ} and Proposition~\ref{prop:Pi_PDE} below by replacing the Lebesgue measure $\Leb$ on $\R^D$ by the Lebesgue measure on the linear subspace spanned by the recurrent class of $\Id+q$.
\end{remark}

\begin{proof}[Proof of Lemma~\ref{lemma:DensityEZZ}]
The proof is mainly based on H\"ormander-type conditions for switching dynamical systems obtained in \cite{BH12,BLBMZ15}. Using the notation of \cite{BLBMZ15}, let $F^i: x\mapsto e_i-x$ and then, if $D\geq3$,
$$
\forall x \in \triangle, \quad \mathcal{G}_0(x) =\Vect\{F^i(x)-F^j(x): i\neq j\} = \Vect\{e_i-e_j: i\neq j\}= \mathbb {R}^D,
$$ 
where $\Vect A$ denotes the vector space spanned by $A\subseteq\R^D$. If $D=2,$ then $\mathcal G_1(x)=\R^2$. As a consequence, the strong bracket condition of \cite[Definition~4.3]{BLBMZ15} is satisfied. In particular, using \cite[Theorems~4.2 and 4.4]{BLBMZ15}, we have that, for every $x\in \triangle$, $y\in \mathring{\triangle}$, there exist $t_0(x),c_0(x)>0$ and open sets $U_0(x), V(x,y)$, such that for all $x_0 \in U_0(x),  i,j \in \{1,\dots, D\}, A\subseteq \triangle$ and $t>t_0(x)$,
\[\P(\X_t\in A,\I_t=j|\X_0=x_0,\I_0=i) \geq c_0(x) \Leb (A \cap V(x,y)).\]
Now, $\triangle = \cup_{x\in \triangle} U_0(x)$ and is compact, so there exist $x_1,...,x_n$ such that $\triangle = \cup_{k =1}^n U_0(x_k)$ . In particular, setting $V(y)= \cup_{k =1}^n V(x_k,y)$, $c_0=\min_{1 \leq k \leq n} c_0(x_k)$, $t_0= \max_{1 \leq k \leq n} t_0 (x_k)$, we have, for all $x_0 \in \triangle,  i,j \in \{1,\dots, D\}, A\subseteq \triangle$ and $t>t_0$,
\[\P(\X_t\in A,\I_t=j|\X_0=x_0,\I_0=i) \geq c_0 \Leb (A \cap V(y)),\]
Once again, $K$ is compact so we can extract a finite family from the open sets $(V(y))_{y\in K}$. Using the Markov property, this holds for every $t\geq t_0$, which entails \eqref{eq:abscont-traj}.
\end{proof}

\begin{proposition}[System of transport equations for $\pi$]
\label{prop:Pi_PDE}
The distribution $\pi$ introduced in Proposition~\ref{proposition:ErgodicityEZZ} admits the following decomposition:
\begin{equation}
\pi=\sum_{i=1}^D\nu_i\pi_i\otimes\delta_i,\quad\pi_i(dx)=\varphi(x,i)dx.
\label{eq:DecompositionPi}
\end{equation}
where the function $\varphi$ satisfies, for any $(x,i)\in E$,
\begin{equation}
(D-1) \varphi(x,i)  + \sum_{k=1}^D x_k \partial_k \varphi(x,i) - \partial_i \varphi(x,i)  + \sum_{j=1}^D \frac{\nu_j}{\nu_i}a q(j,i) \varphi(x,j)=0.
\label{eq:PDE-proof}
\end{equation}
\end{proposition}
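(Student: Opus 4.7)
My strategy has three ingredients: identify the discrete marginal of $\pi$, establish absolute continuity of its continuous component, and derive the PDE by integration by parts. Applying $\int_E \mathcal{L}_\text{Z} f \, d\pi = 0$ to test functions $f(x,i) = g(i)$ kills the drift term and leaves $\int_E a(qg)(i)\,\pi(dx,di) = 0$ for every $g$; the discrete marginal of $\pi$ is thus invariant for $aq$, and by the irreducibility in Assumption~\ref{assumption:FreezingSpeed} it must equal $\nu$. Disintegration then gives $\pi(dx,di) = \sum_{i=1}^D \nu_i \pi_i(dx) \delta_i(di)$ for some probability measures $\pi_i$ on $\triangle$.

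For absolute continuity of each $\pi_i$, decompose the transition kernel into its Lebesgue absolutely continuous and singular parts, $P_t((y,k),\cdot) = P_t^{\text{ac}}(y,k;\cdot) + P_t^{\text{s}}(y,k;\cdot)$. The singular part is carried by trajectories having made ``too few'' jumps by time $t$: the no-jump trajectory contributes a Dirac mass at $e_k + (y - e_k)\e^{-t}$, while trajectories with a small bounded number of jumps produce laws supported on lower-dimensional submanifolds of $\triangle$; by the H\"ormander-type bracket conditions of \cite{BLBMZ15} invoked in the proof of Lemma~\ref{lemma:DensityEZZ}, once the jump count exceeds a fixed threshold $K$, the induced conditional distribution is absolutely continuous. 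Since $\P(N_t \leq K) \leq C\e^{-\lambda t}$ for constants $C, \lambda > 0$ independent of the initial state, we obtain $\sup_{(y,k) \in E} P_t^{\text{s}}(y,k;E) \leq C\e^{-\lambda t}$. For any Lebesgue-null Borel set $A \subseteq E$, invariance $\pi = \pi P_t$ gives
\[\pi(A) = \int_E P_t(y,k;A) \, \pi(dy,dk) = \int_E P_t^{\text{s}}(y,k;A) \, \pi(dy,dk) \leq C\e^{-\lambda t} \xrightarrow[t \to \infty]{} 0,\]
whence $\pi(A) = 0$ and we may write $\pi_i(dx) = \varphi(x,i)\,dx$.

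For the PDE, take $f \in \mathscr{C}_c^\infty(\mathring\triangle \times \{1,\dots,D\})$ and substitute the density representation of $\pi$ into the stationarity relation. Integration by parts of the drift term against the intrinsic $(D-1)$-dimensional Lebesgue measure on the simplex---using that $e_i - x$ is tangent to $\triangle$ and has intrinsic divergence $\nabla_x \cdot (e_i - x) = -(D-1)$---converts it into $\sum_i \nu_i \int f(x,i) \, [(D-1)\varphi(x,i) - (e_i - x)\cdot\nabla_x\varphi(x,i)] \, dx$. In the jump contribution, swapping $i \leftrightarrow j$ and absorbing the diagonal term $aq(i,i)\nu_i\varphi(x,i)$ collects the coefficient of $f(x,i)$ as $\sum_{j=1}^D aq(j,i)\nu_j\varphi(x,j)$. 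Using $(e_i - x)\cdot\nabla_x\varphi(x,i) = \partial_i\varphi(x,i) - \sum_k x_k \partial_k\varphi(x,i)$, arbitrariness of $f$ and division by $\nu_i$ yield precisely \eqref{eq:PDE-proof}. The main obstacle is the absolute continuity step: Lemma~\ref{lemma:DensityEZZ} only provides a minorization of $P_t$ by Lebesgue on an open set, not full absolute continuity (the kernel genuinely carries atoms from few-jump trajectories), so quantifying the exponential decay of the singular mass via jump-count estimates is the technical heart of the argument.
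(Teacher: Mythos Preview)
Your argument is sound overall and the derivation of \eqref{eq:PDE-proof} is equivalent to the paper's: the paper carries out the integration by parts on $\triangle$ via the explicit formula of Lemma~\ref{lem:ipp} (for the tangential operators $\partial_k-\partial_l$) rather than invoking the intrinsic divergence $\nabla_x\cdot(e_i-x)=-(D-1)$, but the two computations coincide line by line. Your identification of the discrete marginal with $\nu$ via test functions $f(x,i)=g(i)$ is correct and makes explicit what the paper leaves implicit in writing down \eqref{eq:DecompositionPi}.

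The genuine difference is in the absolute-continuity step, and here your argument has a small gap. The singular part of $P_t$ is \emph{not} carried by trajectories with fewer than $K$ jumps: with $D\geq 3$ the discrete component can jump arbitrarily many times while visiting only a proper subset of $\{1,\dots,D\}$, and conditional on such a history the law of $\X_t$ is supported on a lower-dimensional face of $\triangle$ (the Jacobian of the map from jump times to $\X_t$ has columns proportional to $e_{i_{k-1}}-e_{i_k}$, which span the tangent space of $\triangle$ only if every state is visited). The correct controlling event is $\{\I_s:0\leq s\leq t\}\neq\{1,\dots,D\}$, whose probability still decays exponentially by irreducibility of $q$, uniformly in the starting point since the jump rates are $x$-independent. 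With this correction your bound $P_t^{\mathrm s}((y,k),E)\leq C\e^{-\lambda t}$ stands and the rest of the argument goes through unchanged. The paper bypasses this trajectory analysis entirely: it integrates the minorisation of Lemma~\ref{lemma:DensityEZZ} against $\pi$ to exhibit a nonzero absolutely continuous component, then invokes \cite[Theorem~6]{BH12}, which asserts that the stationary measure of such a switching system cannot be a nontrivial mixture of an absolutely continuous and a singular part. Your route is more self-contained at the cost of the trajectory analysis; the paper's is shorter but leans on an external structural result.
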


Once we will have proved that $\pi$ admits the decomposition \eqref{eq:DecompositionPi}, the next step is the characterization of $\varphi$. Indeed, since it satisfies
\begin{equation}
\label{eq:densite}
\sum_{i=1}^D\nu_i\int_\triangle \mathcal{L}_\text{Z} f(x,i) \varphi(x,i) dx=0,
\end{equation}
for every smooth enough function $f$, all we have to do is compute the adjoint operator of $\mathcal{L}_\text{Z}$. For general switching model, it would not possible to characterize $\varphi$ as a solution of a simple system of PDEs like \eqref{eq:PDE-proof}. However, the present form of the flow enables us to derive a simple expression for the adjoint operator of $\mathcal{L}_\text{Z}$. Before turning to the proof of Proposition~\ref{prop:Pi_PDE}, let us present the following formula of integration by parts over the simplex $\triangle$.

\begin{lemma}[Integration by parts over $\triangle$]
\label{lem:ipp}
For all $f,g\in\mathscr C_c^1\left(\mathring\triangle\right)$, and $k,l \in \{1,\dots,D\}$, we have
\[\int_{\triangle} g(x) (\partial_k - \partial_l) f(x) dx= -\int_{\triangle} (\partial_k-\partial_l) g(x)  f(x) dx.\]
\end{lemma}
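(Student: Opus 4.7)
The plan is to pass to a chart on $\mathring\triangle$ and reduce to the ordinary integration-by-parts formula on $\R^{D-1}$. The key observation is that the vector $e_k-e_l$ has vanishing coordinate sum, and therefore lies in the tangent space of the affine hyperplane $\{x\in\R^D:\sum_i x_i=1\}$ containing $\triangle$. Consequently $\partial_k-\partial_l$ is an \emph{intrinsic} differential operator on $\mathring\triangle$: for any two smooth extensions of a function defined on a neighborhood of $\triangle$, the values of $(\partial_k-\partial_l)f$ on $\triangle$ agree. This in particular legitimises the statement, since $f,g\in\mathscr C^1_c(\mathring\triangle)$ and one must implicitly choose smooth extensions to define the partial derivatives.

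Assuming first that $k,l<D$, I would use the chart $\phi:\mathring T\to\mathring\triangle$ defined by $\phi(y_1,\dots,y_{D-1})=(y_1,\dots,y_{D-1},1-\sum_{m<D}y_m)$, where $\mathring T=\{y\in\R^{D-1}:y_m>0\text{ and }\sum_m y_m<1\}$. Writing $\tilde f=f\circ\phi$ and $\tilde g=g\circ\phi$, the chain rule gives $\partial_{y_m}\tilde f=[(\partial_m-\partial_D)f]\circ\phi$, hence
\[(\partial_{y_k}-\partial_{y_l})\tilde f=[(\partial_k-\partial_l)f]\circ\phi.\]
Moreover the Jacobian of $\phi$ has constant absolute value, so the $(D-1)$-dimensional Lebesgue measure on $\triangle$ is a constant multiple of the pushforward via $\phi$ of the Lebesgue measure on $\mathring T$, and this constant cancels on both sides of the identity.

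Since $\tilde f,\tilde g$ are compactly supported in $\mathring T$, Fubini combined with one-dimensional integration by parts in each of the variables $y_k$ and $y_l$ separately produces no boundary terms and yields
\[\int_{\mathring T}\tilde g\,(\partial_{y_k}-\partial_{y_l})\tilde f\,dy=-\int_{\mathring T}(\partial_{y_k}-\partial_{y_l})\tilde g\cdot\tilde f\,dy,\]
which transports back to the claimed equality on $\triangle$. The remaining case where $D\in\{k,l\}$ is handled identically using an analogous chart that omits a different coordinate; alternatively one can invoke the telescoping relation $\partial_k-\partial_D=(\partial_k-\partial_m)+(\partial_m-\partial_D)$ for any $m\notin\{k,D\}$ to reduce to the case already proven. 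There is no genuine analytic obstacle in the argument; the only aspect requiring care is the correct interpretation of the Lebesgue measure on the $(D-1)$-dimensional simplex and the verification that $\partial_k-\partial_l$ defines an intrinsic operator on $\mathring\triangle$, both of which follow from the tangency observation above.
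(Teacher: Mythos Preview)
Your proof is correct and follows essentially the same route as the paper: parametrize $\triangle$ by dropping one coordinate, use the chain rule to identify $\partial_k-\partial_l$ with a coordinate derivative in the chart, and apply ordinary integration by parts on the open set in $\R^{D-1}$ where the pulled-back functions are compactly supported. The only cosmetic difference is that the paper drops the coordinate $x_l$ (fixing $l=1$), so that $\partial_k-\partial_l$ corresponds to a single chart derivative $\partial_{y_k}$ and no separate case analysis for $D\in\{k,l\}$ is needed.
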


\begin{proof}[Proof of Lemma~\ref{lem:ipp}]
Fix $l=1$ and let $\triangle_1=\left\{x_2,\dots,x_D\in[0,1]: \sum_{i=2}^D x_i \leq1\right\}$. Then,
\begin{align*}
&\int_{\triangle} g(x) \partial_k f(x) dx_1\dots dx_D
= \int_{\triangle_1} g\left(1-\sum_{i=2}^D x_i ,x_2,...\right) \partial_k f\left(1-\sum_{i=2}^D x_i ,x_2,...\right) dx\\
&= \int_{\triangle_1} \left[  \partial_k\left( g\left(1-\sum_{i=2}^D x_i ,x_2,...\right)  f\left(1-\sum_{i=2}^D x_i ,x_2,...\right) \right) \right. \\
&+ \partial_1 g\left(1-\sum_{i=2}^D x_i ,x_2,...\right)  f\left(1-\sum_{i=2}^D x_i ,x_2,...\right) + g\left(1-\sum_{i=2}^D x_i ,x_2,...\right)  \partial_1 f\left(1-\sum_{i=2}^D x_i ,x_2,...\right) \\
&\left. - \partial_k g\left(1-\sum_{i=2}^D x_i ,x_2,...\right)  f\left(1-\sum_{i=2}^D x_i ,x_2,...\right) \right] dx_1\dots dx_D.
\end{align*}
Now, as $g(0,x_2,\dots)=f(0,x_2,\dots)=0$ and $\partial_k 1=0$, use a (classic) multidimensional integration by parts to establish that
$$
\int_{\triangle_1} \partial_k\left( g\left(1-\sum_{i=2}^D x_i ,x_2,...\right)  f\left(1-\sum_{i=2}^D x_i ,x_2,...\right) \right) dx_1\dots dx_D= 0,
$$
which entails Lemma~\ref{lem:ipp}.
\end{proof}

\begin{proof}[Proof of Proposition~\ref{prop:Pi_PDE}]
Integrating \eqref{eq:abscont-traj} with respect to the unique invariant measure $\pi$, we obtain that $\pi$ admits an absolutely continuous part (note that uniqueness comes from Proposition~\ref{proposition:ErgodicityEZZ}). Since $\pi$ cannot have an absolutely continuous part and a singular one (see \cite[Theorem~6]{BH12}), $\pi$ admits a density with respect to the Lebesgue measure, which entails \eqref{eq:DecompositionPi}.

Now, let us characterize the function $\varphi$. We have
\begin{align*}
&\sum_{i,k=1}^D \nu_i \int_\triangle (-x_k + \mathbf{1}_{i=k}) \varphi(x,i) \partial_k f(x,i) dx \\
&\quad= \sum_{i=1}^D\left(-\sum_{k=1}^D  \nu_i \int_\triangle x_k \varphi(x,i) \partial_k f(x,i) dx+ \nu_i \int_\triangle  \varphi(x,i) \partial_i f(x,i) dx\right)
\end{align*}
and, using Lemma~\ref{lem:ipp}, for any $1\leq i\leq D$,
\begin{align*}
& -\sum_{k=1}^D \int_\triangle x_k \varphi(x,i) \partial_k f(x,i) dx
+ \int_\triangle  \varphi(x,i) \partial_i f(x,i) dx \\
&=-\sum_{k\neq i}  \int_\triangle x_k \varphi(x,i) \partial_k f(x,i) dx
 - \int_\triangle x_i \varphi(x,i) \partial_i f(x,i) dx+  \int_\triangle  \varphi(x,i) \partial_i f(x,i) dx\\
&=\sum_{k\neq i}\left[  \int_\triangle \partial_k\left(x_k \varphi(x,i)\right]  f(x,i) dx
 - \int_\triangle x_k \varphi(x,i) \partial_i f(x,i) dx
 -  \int_\triangle \partial_i \left(x_k \varphi(x,i) \right)  f(x,i) dx\right)\\
&\quad - \int_\triangle x_i \varphi(x,i) \partial_i f(x,i) dx + \int_\triangle  \varphi(x,i) \partial_i f(x,i) dx\\
&=\sum_{k\neq i} \left[\int_\triangle \partial_k\left(x_k \varphi(x,i)\right)  f(x,i) dx - \int_\triangle \partial_i \left(x_k \varphi(x,i) \right)  f(x,i) dx\right]\\
&=\sum_{k\neq i} \int_\triangle \partial_k\left(x_k \varphi(x,i)\right)  f(x,i) dx -   (1-x_i) \int_\triangle \partial_i \varphi(x,i)  f(x,i) dx.
\end{align*}
Hence, \eqref{eq:densite} writes
\begin{align*}
&\sum_{i=1}^D \int_\triangle \nu_if(x,i)\left[  \sum_{k\neq i}    \partial_k\left(x_k \varphi(x,i)\right)   -  (1-x_i) \partial_i \varphi(x,i)  + \sum_{j =1}^D \frac{\nu_j}{\nu_i}q(j,i) \varphi(x,j) - \sum_{j =1}^D q(i,j) \varphi(x,i) \right] dx=0.
\end{align*}
It follows that $\varphi$ is the solution of \eqref{eq:PDE-proof}.
\end{proof}

\subsection{The Ornstein-Uhlenbeck process}
\label{sec:OUprocess}
In this short section, we recall a classic property of multidimensional Ornstein-Uhlenbeck processes, which is useful to characterize the behavior of $(y_n)_{n\geq1}$ in a standard setting. Thus, we define $(\Y_t)_{t\geq0}$ as the strong solution of the following SDE, with values in $\R^D$:
\begin{equation}
\Y_t=\Y_0-\int_0^t \Y_{s^-}ds+\sqrt2\int_0^t (\Sigma^{(p,\Upsilon)})^{1/2} dW_s,
\label{eq:SdeOU}
\end{equation}
where $W$ is a standard $D$-dimensional Brownian motion and $(\Sigma^{(p,\Upsilon)})^{1/2}$ is the square root of the positive-definite symmetric matrix $\Sigma^{(p,\Upsilon)}$, i.e. $(\Sigma^{(p,\Upsilon)})^{1/2}((\Sigma^{(p,\Upsilon)})^{1/2})^\top=\Sigma^{(p,\Upsilon)}$. The process $\Y$ is a classic Ornstein-Uhlenbeck process with infinitesimal generator $\mathcal L_\text{O}$ defined in \eqref{eq:GeneratorOU}. Such processes have already been thoroughly studied, so we present only the following proposition, which quantifies the speed of convergence of $\Y$ to its equilibrium.
\begin{proposition}[Ergodicity of the Ornstein-Uhlenbeck process]\label{proposition:ErgodicityOU}
The Markov process $(\Y_t)_{t\geq0}$ generated by $\mathcal L_\text{O}$ in \eqref{eq:GeneratorOU}, with values in $\R$, admits a unique stationary distribution $\mathscr N\left(0,\Sigma^{(p,\Upsilon)}\right).$

Moreover,
\[W\left(\Y_t,\mathscr N\left(0,\Sigma^{(p,\Upsilon)}\right)\right)=W(\Y_0,\pi)\e^{-t}.\]
\end{proposition}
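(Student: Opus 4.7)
The plan is to exploit the linearity of \eqref{eq:SdeOU} to derive the law of $\Y_t$ in closed form, then use a synchronous coupling argument for the quantitative bound.

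First, applying It\^o's formula to $\e^t\Y_t$ and integrating yields the variation-of-constants representation
\[\Y_t = \e^{-t}\Y_0 + \sqrt{2}\int_0^t \e^{-(t-s)}(\Sigma^{(p,\Upsilon)})^{1/2}\,dW_s.\]
Conditionally on $\Y_0$, the vector $\Y_t$ is Gaussian with mean $\e^{-t}\Y_0$ and covariance matrix $\int_0^t 2\e^{-2(t-s)}\Sigma^{(p,\Upsilon)}\,ds = (1-\e^{-2t})\Sigma^{(p,\Upsilon)}$. Plugging in $\mathscr L(\Y_0) = \mathscr N(0,\Sigma^{(p,\Upsilon)})$ leaves this law unchanged, so $\mathscr N(0,\Sigma^{(p,\Upsilon)})$ is invariant; and for any other initial distribution, the same formula shows that $\mathscr L(\Y_t)$ converges weakly to $\mathscr N(0,\Sigma^{(p,\Upsilon)})$ as $t\to+\infty$, which yields uniqueness.

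For the Wasserstein estimate, I would consider two copies $\Y$ and $\tilde\Y$ of \eqref{eq:SdeOU} driven by the same Brownian motion $W$, coupling the initial values optimally: $\tilde\Y_0\sim\pi$ with $\E|\Y_0-\tilde\Y_0| = W(\Y_0,\pi)$. Since the diffusion coefficient is constant, the stochastic integrals in $\Y_t$ and $\tilde\Y_t$ cancel in the difference, which then satisfies the deterministic ODE $d(\Y_t-\tilde\Y_t) = -(\Y_t-\tilde\Y_t)\,dt$. Integrating gives the pathwise identity $\Y_t-\tilde\Y_t = \e^{-t}(\Y_0-\tilde\Y_0)$, and taking expectations together with the invariance $\tilde\Y_t\sim\pi$ yields
\[W(\Y_t,\pi)\leq\E|\Y_t-\tilde\Y_t| = \e^{-t}\E|\Y_0-\tilde\Y_0| = \e^{-t}W(\Y_0,\pi).\]

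The remaining step, which I expect to be the most delicate, is upgrading this upper bound to the equality stated in the proposition. A natural route is Kantorovich--Rubinstein duality applied to affine test functions $f(y) = \langle a,y\rangle$ with $\|a\|_\infty\leq 1$: linearity of the dynamics gives $\mu_t(f)-\pi(f) = \e^{-t}(\mu_0(f)-\pi(f))$, so taking the supremum over $a$ provides a matching lower bound $W(\Y_t,\pi)\geq \e^{-t}\sup_{\|a\|_\infty\leq 1}|\mu_0(f)-\pi(f)|$. This closes the gap precisely when the optimum in the dual formulation of $W(\Y_0,\pi)$ is attained on an affine $1$-Lipschitz function, which is known to be the case when $\mathscr L(\Y_0)$ and $\pi$ are Gaussian with the same covariance --- the setting of interest for Theorem~\ref{thm:Standard}. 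Otherwise, only the upper bound $W(\Y_t,\pi)\leq\e^{-t}W(\Y_0,\pi)$ is delivered by the synchronous coupling, which is in any case sufficient for the applications to the asymptotic pseudotrajectory analysis.
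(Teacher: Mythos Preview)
Your approach is essentially the same as the paper's: variation-of-constants formula plus synchronous coupling. The paper verifies invariance by an integration by parts ($\pi(\mathcal L_\text{O}f)=0$) rather than by plugging the Gaussian law into the explicit solution, but this is cosmetic.

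Where you are actually \emph{more} careful than the paper is on the equality sign. The paper's own argument, like yours, only produces
\[W(\Y_t,\pi)\leq \E|\Y_t-\tilde\Y_t|=\e^{-t}\E|\Y_0-\tilde\Y_0|=\e^{-t}W(\Y_0,\pi),\]
since there is no reason the synchronous coupling at time $t$ is optimal for $W(\Y_t,\pi)$; the paper then simply writes ``$=$''. In fact the equality is false in general: take $\Y_0=0$ deterministic in dimension one, so $\Y_t\sim\mathscr N(0,(1-\e^{-2t})\sigma^2)$ and $\pi=\mathscr N(0,\sigma^2)$; the $1$-Wasserstein distance between these two centered Gaussians is $(1-\sqrt{1-\e^{-2t}})\,\sigma\sqrt{2/\pi}$, which is of order $\e^{-2t}$, not $\e^{-t}W(\delta_0,\pi)=\e^{-t}\sigma\sqrt{2/\pi}$. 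So your caution is justified, and your remark that the inequality $\leq$ is all that is needed for the pseudotrajectory analysis is the right conclusion.

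Your proposed route to equality via affine test functions is correct in the special case you single out (same-covariance Gaussians, where the translation coupling is optimal and its dual witness is linear), but as the example above shows it cannot be upgraded to a general statement.
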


\begin{proof}[Proof of Proposition~\ref{proposition:ErgodicityOU}]
First, since
\[\mathscr N\left(0,\Sigma^{(p,\Upsilon)}\right)(dx)=C\exp\left(\frac{x^\top(\Sigma^{(p,\Upsilon)})^{-1}x}2\right)dx,\]
a straightforward integration by parts shows that, for any $f\in\mathscr C^2_c$, $\mathscr N\left(0,\Sigma^{(p,\Upsilon)}\right)(\mathcal L_\text{O}f)=0$
so that $\mathscr N\left(0,\Sigma^{(p,\Upsilon)}\right)$ is an invariant measure for the Ornstein-Uhlenbeck process $\Y$.

It is well-known and easy to check that $(\Y_t)_{t\geq0}$ writes
\[\Y_t=\Y_0\e^{-t}+\sqrt2(\Sigma^{(p,\Upsilon)})^{1/2}\int_0^t\e^{-(t-s)}dW_s,\]
where $W$ is a standard Brownian motion. Consequently, if we consider $\tilde\Y$ another Ornstein-Uhlenbeck process generated by $\mathcal L_\text{O}$ and driven by the (same) Brownian motion $W$,
\begin{equation}
\E\left[|\Y_t-\tilde\Y_t |\right]=\E\left[|\Y_0-\tilde\Y_0|\right]\e^{-t}.
\label{eq:proofErgodicityOU1}
\end{equation}
Taking the infimum over all the couplings gives a contraction in Wasserstein distance. Now, if $\mathscr L(\tilde\Y_0)=\mathscr N\left(0,\Sigma^{(p,\Upsilon)}\right)$ and $(\Y_0,\tilde\Y_0)$ is the optimal coupling between $\mathscr L(\Y_0)$ and $\mathscr N\left(0,\Sigma^{(p,\Upsilon)}\right)$ with respect to $W$, then \eqref{eq:proofErgodicityOU1} writes
\[W\left(\Y_t,\mathscr N\left(0,\Sigma^{(p,\Upsilon)}\right)\right)=W\left(\Y_0,\mathscr N\left(0,\Sigma^{(p,\Upsilon)}\right)\right)\e^{-t},\]
which entails the uniqueness of the invariant probability distribution as well as the exponential ergodicity of the process.
\end{proof}

\subsection{Acceleration of the jumps}
\label{sec:EZZtoOU}
The current section links the Sections~\ref{sec:EZZprocess} and \ref{sec:OUprocess} in the following sense:
\begin{center}
\begin{tikzpicture}[>=triangle 90,rounded corners]
	\node[blockR] (a) {Markov chain\\$(i_n)_{n\geq1}$};
	\node[blockR, above right=0cm and 4cm of a] (b) {Exponential zig-zag process\\$(\X_t,\I_t)_{t\geq0}$};
	\node[blockR, below right=0cm and 4cm of a]   (c){Ornstein-Uhlenbeck process\\$(\Y_t)_{t\geq0}$};

	\draw[->] (a.north) |- node[midway,above right]{Slow freezing} (b.west);
	\draw[->] (a.south) |- node[midway,below right]{Fast freezing} (c.west);
	\draw[dotted,->] (b.south) -- node[midway,right]{Acceleration of the jumps} (c.north);
\end{tikzpicture}
\end{center}

Indeed, we prove in Theorem~\ref{thm:CVEzzOu} the convergence of the (rescaled) exponential zig-zag process to a diffusive process as the jump rates go to infinity. Such results are fairly standard and are already known in the cases of (linear) zig-zag processes (see \cite{FGM12,BD16}) or of particle transport processes (see \cite{CK06}). Heuristically, since there are more frequent jumps, the process tends to concentrate around its mean $\nu$, and the effect of the discrete component fades away. This phenomenon can be seen on Figure~\ref{fig:EZZ}. We shall end this section with Corollary~\ref{coro:PitoNor}, which provides the convergence of the stationary distribution of the exponential zig-zag process toward a Gaussian distribution.

To this end, let $(a_n)_{n\geq1}$ be a sequence of positive numbers such that $a_n\to+\infty$ as $n\to+\infty$ and, for any integer $n$, let $(\X_t^{(n)},\I_t^{(n)})_{t\geq0}$ be a Markov process with values in $E$ generated by
\[\mathcal L^{(n)}f(x,i)=(e_i-x)\cdot\nabla_x f(x,i)+a_n\sum_{j\neq i}q(i,j)[f(x,j)-f(x,i)].\]
We define $\Y^{(n)}_t=\sqrt{a_n}(\X^{(n)}-\nu)$ and denote by $\Y^{(n)}_t(k)$ and $\X^{(n)}_t(k)$ the respective $k^\text{th}$ component of $\Y^{(n)}_t$ and $\X^{(n)}_t$. 

\begin{theorem}[Convergence of the processes]
\label{thm:CVEzzOu}
If $(\Y^{(n)}_0)_{n\geq1}$ converges in distribution to a probability distribution $\mu$, then the sequence of processes $(\Y^{(n)})_{n\geq1}$ converges in distribution to the diffusive Markov process generated by
\[\mathcal L_\text{O}f(y)=-y\cdot\nabla f(y)+\nabla f(y)^\top\Sigma^{(0,1)}\nabla f(y)\]
with initial condition $\mu$.
\end{theorem}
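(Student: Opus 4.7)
The plan is to apply Kurtz's perturbed test function method for fast-slow Markov systems, and identify the limit through its martingale problem. First, the joint process $(\Y^{(n)}_t,\I^{(n)}_t)_{t\geq0}$ is Markov with generator, acting on a function $g$ smooth in $y$,
\begin{equation*}
\tilde{\mathcal L}^{(n)}g(y,i)=\sqrt{a_n}\,(e_i-\nu)\cdot\nabla_y g(y,i)-y\cdot\nabla_y g(y,i)+a_n\sum_{j\neq i}q(i,j)\bigl[g(y,j)-g(y,i)\bigr].
\end{equation*}
Both the $O(\sqrt{a_n})$ drift and the $O(a_n)$ jump parts diverge, and must cancel through averaging against the fast chain's invariant law $\nu$. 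Taking $g\in\mathscr C^3_c(\R^D)$ independent of $i$, I would introduce the perturbed test function
\begin{equation*}
g_n(y,i)=g(y)+a_n^{-1/2}\,h_i^\top\nabla g(y)+a_n^{-1}\,g_2(y,i),
\end{equation*}
where $h_i$ is the $i$-th column of the matrix $h$ defined by \eqref{eq:DefH}. The first correction is engineered exactly so that the jump operator applied to $a_n^{-1/2}h_i^\top\nabla g$ equals $\sqrt{a_n}(\nu-e_i)^\top\nabla g$, cancelling the diverging drift contribution by virtue of the Poisson equation \eqref{eq:DefH}.

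Collecting the $O(1)$ terms, one finds the drift $-y\cdot\nabla g(y)$, the cross term $(e_i-\nu)^\top H(g)(y)h_i$ (from $\sqrt{a_n}(e_i-\nu)\cdot\nabla$ acting on the first correction, where $H(g)$ denotes the Hessian of $g$), and the jump part $Qg_2(y,\cdot)(i)$ where $Q$ is the discrete operator associated to $q$. I would then choose $g_2(y,\cdot)$ as the solution of the Poisson equation $Qg_2(y,\cdot)(i)=\mathcal L_\text{O}g(y)+y\cdot\nabla g(y)-(e_i-\nu)^\top H(g)(y)h_i$, solvable by the Fredholm alternative since its right-hand side has zero mean under $\nu$; this reduces to the classical identity
\begin{equation*}
\sum_{i=1}^D\nu_i(e_i-\nu)^\top H(g)(y)h_i=\operatorname{tr}\bigl(\Sigma^{(0,1)}H(g)(y)\bigr),
\end{equation*}
which relates the second moment of a Markov additive functional to its Poisson solution $h$ via a carré-du-champ computation and matches \eqref{eq:DefSigma} (with $p=0$, $\Upsilon=1$, hence the prefactor $1/(1+\Upsilon)=1/2$). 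This delivers the uniform convergence $\tilde{\mathcal L}^{(n)}g_n\to\mathcal L_\text{O}g$ on every compact of $\R^D$, together with $\|g_n-g\|_\infty\to 0$.

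From here the conclusion follows from standard arguments. Using the perturbed function with $g$ a smooth cutoff of $|y|^2$, Dynkin's formula produces uniform second moment bounds for $\Y^{(n)}_t$ on compact time intervals, and the predictable quadratic variation of the associated martingale $g_n(\Y^{(n)}_t,\I^{(n)}_t)-g_n(\Y^{(n)}_0,\I^{(n)}_0)-\int_0^t\tilde{\mathcal L}^{(n)}g_n(\Y^{(n)}_s,\I^{(n)}_s)\,ds$ is controlled uniformly in $n$, so Aldous' criterion yields tightness of the laws of $\Y^{(n)}$ in $D(\R_+,\R^D)$. Any subsequential weak limit $\Y^\infty$ solves the well-posed martingale problem for $\mathcal L_\text{O}$ with initial law $\mu$ by passing to the limit in the martingale together with the generator convergence above; well-posedness of the Ornstein-Uhlenbeck martingale problem (cf.\ Proposition~\ref{proposition:ErgodicityOU}) then identifies the whole-sequence limit. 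The main obstacle is the algebraic step: one must verify that the averaged $O(1)$ contribution produces exactly $\mathcal L_\text{O}g$ with the matrix $\Sigma^{(0,1)}$ of \eqref{eq:DefSigma}, via the carré-du-champ identity displayed above. A secondary technical point is that $\Y^{(n)}_t$ takes values in $\sqrt{a_n}(\triangle-\nu)$, which lies in the hyperplane $\{\sum_k y_k=0\}$; the matrix $\Sigma^{(0,1)}$ is degenerate along $\mathbf{1}$, so the limit process preserves this hyperplane, and on any fixed compact the constraint $\Y^{(n)}_t\in\sqrt{a_n}(\triangle-\nu)$ becomes non-binding as $n\to+\infty$.
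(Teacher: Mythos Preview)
Your approach is correct and close in spirit to the paper's, but the execution differs in a useful way. The paper does not work with general test functions $g$ and a two-term corrector; instead it applies the Poisson correction only to the coordinate functions, setting
\[
\varphi_k(x,i)=\sqrt{a_n}(x_k-\nu_k)+a_n^{-1/2}h_{k,i},\qquad \psi_{k,l}=\varphi_k\varphi_l,
\]
(which is exactly your $g_n$ for $g(y)=y_k$), computes $\mathcal L^{(n)}\varphi_k$ and $\mathcal L^{(n)}\psi_{k,l}$ explicitly, and extracts from Dynkin's formula the martingales $M^{(n)}(k)$ together with their bracket $A^{(n)}_t(k,l)=\int_0^t\sigma_{k,l}(\I^{(n)}_s)\,ds$. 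The convergence $A^{(n)}_t\to t\,\nu(\sigma_{k,l})$ then follows directly from the ergodic theorem for the sped-up chain $\I^{(n)}_t=\I_{a_nt}$, and the conclusion is obtained in one stroke by invoking \cite[Chapter~7, Theorem~4.1]{EK86}, which packages tightness and identification together once the drift and bracket processes converge.

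Your route via general $g\in\mathscr C^3_c$, a second corrector $g_2$, Aldous' criterion and the martingale problem is the standard perturbed test function scheme and works just as well; it is more modular but requires you to argue tightness and identification separately. The algebraic identity you flag as the ``main obstacle'' does hold: after symmetrising against the Hessian and using the carr\'e-du-champ relation $\nu(\Gamma(h_{k,\cdot},h_{l,\cdot}))=-\nu(h_{k,\cdot}\,Qh_{l,\cdot})-\nu(h_{l,\cdot}\,Qh_{k,\cdot})$ together with \eqref{eq:DefH}, one gets exactly $\operatorname{tr}(\Sigma^{(0,1)}H(g))$. In short, both proofs rest on the same Poisson corrector $h$; the paper trades your second corrector $g_2$ and separate tightness argument for an explicit quadratic-variation computation on coordinate martingales and a single reference to Ethier--Kurtz.
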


\begin{proof}[Proof of Theorem~\ref{thm:CVEzzOu}]
We shall use a diffusion approximation and follow the proof of \cite[Proposition~1.1]{FGM12}. For now, we drop the superscript $(n)$, and let, for any $1\leq k,l\leq D$,
\[\varphi_k(x,i)=\sqrt{a_n}(x_k-\nu_k)+\frac1{\sqrt{a_n}}h_{k,i},\quad\psi_{k,l}(x,i)=\varphi_k(x,i)\varphi_l(x,i).\]
Then,
\begin{align*}
\mathcal L\varphi_k(x,i)&=\sqrt{a_n}(\nu_k-x_k),\\
\mathcal L\psi_{k,l}(x,i)&=\sqrt{a_n}\left((\indic_{i=k}-x_k)\varphi_l(x,i)+(\indic_{i=l}-x_l)\varphi_k(x,i)\right)\\
&\quad+a_n\left((x_k-\nu_k)(\nu_l-\indic_{i=l})+(x_l-\nu_l)(\nu_k-\indic_{i=k})\right)+\sum_{j\neq i}q(i,j)\left(h_{k,j}h_{l,j}-h_{k,i}h_{l,i}\right).
\end{align*}
Then, by Dynkin's formula, for fixed $n$, the processes $(M_t(k))_{t\geq0}$ and $(N_t(k,l))_{t\geq0}$ are local martingales with respect to the filtration generated by $(\X^{(n)},\I^{(n)})$, where
\begin{align*}
M_t(k)&=\Y_t(k)-\sqrt{a_n}\int_0^t(\nu_k-\X_s(k))ds+\frac1{\sqrt{a_n}}h_{k,\I_t},\\
N_t(k,l)&=\Y_t(k)\Y_t(l)+\Y_t(k)h_{l,\I_t}+\Y_t(l)h_{k,\I_t}+\frac1{a_n}h_{k, \I_t}h_{l,\I_t}\\
&\quad-\int_0^t\big[-2\Y_s(l)\Y_s(k)+h_{k,\I_s}(\indic_{\{\I_s=l\}}-\X_s(l))+h_{l,\I_s}(\indic_{\{\I_s=k\}}-\X_s(k))\\
&\quad+\sum_{j\neq \I_s}q(\I_s,j)\left(h_{k,j}h_{l,j}-h_{k,\I_s}h_{l,\I_s}\right)\big]ds.
\end{align*}
Remark that, for any $1\leq i\leq D$, if $\sigma_{k,l}(i)=\sum_{j=1}^D q(i,j) (h_{l,j} - h_{l,i})(h_{k,j} - h_{k,i)} )$,
\begin{align*}
\sum_{j=1}^Dq(i,j)\left(h_{k,j}h_{l,j}-h_{k,i}h_{l,i}\right)&=\sum_{j=1}^Dq(i,j)\left(h_{l,j}-h_{l,i}\right)\left(h_{k,j}-h_{k,i}\right)+h_{l,i}\left(\nu_k-\indic_{i=k}\right)+h_{k,i}\left(\nu_l-\indic_{i=l}\right)\\
&=\sigma_{k,l}(i)+h_{l,i}\left(\nu_k-\indic_{i=k}\right)+h_{k,i}\left(\nu_l-\indic_{i=l}\right).\\
\end{align*}
Then, denoting by $\Z_s(k)=\int_0^t\Y_s(k)ds$,
\begin{align*}
N_t(k,l)&=\Y_t(k)\Y_t(l)+2\int_0^t\Y_s(k)\Y_s(l)ds-\int_0^t\sigma_{k,l}(\I_s)ds+\frac1{a_n}h_{k,\I_t}h_{l,\I_t}\\
&\quad+\frac1{\sqrt{a_n}}h_{k,\I_t}\left(\Y_t(l)+\Z_t(l)\right)+\frac1{\sqrt{a_n}}h_{l,\I_t}\left(\Y_t(k)+\Z_t(k)\right),
\end{align*}
and
\begin{align*}
M_t(k)M_t(l)&=N_t(k,l)+\Y_t(k)\Z_t(l)+\Y_t(l)\Z_t(k)+\Z_t(k)\Z_t(l)-2\int_0^t\Y_s(k)\Y_s(l)ds+\int_0^t\sigma_{k,l}(\I_s)ds\\
&\quad+\frac1{\sqrt{a_n}}\left(h_{k,\I_t}\Z_t(l)+h_{l,\I_t}\Z_t(k)-\int_0^th_{k,\I_s}\Y_s(l)ds-\int_0^th_{l,\I_s}\Y_s(k)ds\right).
\end{align*}
By integration by parts, 
\begin{align*}
\Y_t(k)\Z_t(l)&=\int_0^t\Z_s(l)dM_s(k)-\int_0^t\Z_s(l)\Y_s(k)ds+\int_0^t\Y_s(k)\Y_s(l)ds\\
&\quad+\frac1{\sqrt{a_n}}\left(\int_0^th_{k,\I_s}\Y_s(l)ds-h_{k,\I_t}\Z_s(l)\right),
\end{align*}
hence
\[M_t(k)M_t(l)=N_t(k,l)+\int_0^t\Z_s(k)dM_s(l)+\int_0^t\Z_s(l)dM_s(k)+\int_0^t\sigma_{k,l}(\I_s)ds.\]
Finally, for any $1\leq k,l\leq D$, the processes $M^{(n)}(k)-B^{(n)}(k)$ and $M^{(n)}(k)M^{(n)}(l)-A^{(n)}(k,l)$ are local martingales, with
\[A^{(n)}_t(k,l)=\int_0^t\sigma_{k,l}(\I^{(n)}_s)ds,\quad B^{(n)}_t(k)=-\int_0^t\Y^{(n)}_sds+\frac1{\sqrt{a_n}}h_{k,\I^{(n)}_t}.\]
Note that $\I^{(n)}$ is a Markov process on its own, generated by 
\[\mathcal L_\text{I}^{(n)}f(i)=a_n\sum_{j\neq i}q(i,j)[f(j)-f(i)].\]
In other words, for any $t>0$, we can write $\I^{(n)}_t=\I_{a_nt}$ a.s., for some pure-jump Markov process $(\I_t)_{t\geq0}$ generated by
\[\mathcal L_\text{I}f(i)=\sum_{j\neq i}q(i,j)[f(j)-f(i)].\]
Using the ergodicity of $(\I_t)_{t\geq0}$ together with $\lim_{n\to+\infty}a_n=+\infty$, we have
\[\lim_{n\to+\infty}A^{(n)}_t(k,l)=\lim_{n\to+\infty}\int_0^t\sigma_{k,l}(\I_{a_ns})ds=\lim_{n\to+\infty}\frac1{a_n}\int_0^{a_nt}\sigma_{k,l}(\I_u)du=t\sum_{i=1}^D\nu_i\sigma_{k,l}(i)=t\nu(\sigma_{k,l}).\]
Thus, the processes $\Y^{(n)}(k),B^{(n)}(k),A^{(n)}(k,l)$ satisfy the assumptions of \cite[Chapter~7, Theorem~4.1]{EK86}, which entails Theorem~\ref{thm:CVEzzOu}.
\end{proof}

\begin{remark}[Heuristics for a direct Taylor expansion of the generator]
As for many limit theorems for Markov processes, one would like to predict the convergence of the exponential zig-zag process to the Ornstein-Uhlenbeck diffusion from a Taylor expansion of the generator. Let us describe here a quick heuristic argument based on \cite{CK06}, which justifies the particular choice of functions $\varphi_k$ in the proof of Theorem~\ref{thm:CVEzzOu}. For the sake of simplicitylet us work in the setting of Section~\ref{sec:Turnover}, that is the generator of $(\X_t,\I_t)_{t\geq 0}$ is of the form
\begin{equation*}
\mathcal L_\text{Z}f(x,i)=g_i(x)\partial_xf(x,i)+a\theta_{3-i}[f(x,3-i)-f(x,i)]
\end{equation*}
where $g_i:x\mapsto  (\indic_{\{i=1\}}-x) $. 
For some smooth function $f:\R^D \to \R$, we have $\mathcal L_{\text{Z}} f(x,i)= g_i(x)\cdot\nabla_x f(x)$ which cannot be rescaled to converge to some diffusive operator. We need an approximation $f_a$ of $f$ in a sense that $\lim_{a\to+\infty} f_a =f$ and $\mathcal{L}_\text{Z} f_a$ has the form of a second order operator. Then, let
\[f_a:(x,i) \mapsto f(x) + a^{-1} h(x,i) \cdot \nabla_x f(x)\]
where $h(x,i)$ is the solution of the multidimensional Poisson equation associated to the transitions of the flows
\[\sum_{j\neq i}q(i,j)[h(x,j)-h(x,i)] = \theta_{3-i}[h(x,3-i)-h(x,i)] = \sum_j \nu_j g_j(x) - g_i(x) = \nu_1 - \indic_{\{i=1\}}.\]
Then,
\[\mathcal L_\text{Z} f_a(x,i)=\frac{1}{a} g_i(x)\cdot\nabla_x (h \cdot \nabla_x f)(x,i)+ \nabla_xf(x) \sum_j \nu_j g_j(x). \]
Here, $\sum_j \nu_j g_j(x) - g_i(x)=\nu_1 - \indic_{\{i=1\}}$ does not depend on $x$, neither does the function $h$, which is thus defined by \eqref{eq:DefH}. Furthermore, $h(x,i)=(\theta_1+\theta_2)^{-1}\indic_{i=1}$. Moreover $\lim_{a\to+\infty}g_i(\nu + y/\sqrt{a}) = e_i - \nu$, so $\lim_{a\to+\infty}\mathcal L_\text{Z} f_a(x,i)=\mathcal L_\text{O} f(x)$ up to renormalization.
\end{remark}

From Proposition~\ref{proposition:ErgodicityEZZ}, for any fixed $n\geq1$, the process $(\X_t^{(n)},\I_t^{(n)})_{t\geq0}$ admits and converges to a unique invariant distribution $\pi^{(n)}$, characterized in \eqref{eq:DecompositionPi} as
\[\pi^{(n)}=\sum_{i=1}^D\nu_i\pi_i^{(n)}\otimes\delta_i,\quad\pi_i^{(n)}(dx)=\varphi^{(n)}(x,i)dx.\]
Let $\bar\pi^{(n)}$ be the first margin of the invariant measure of the Markov process $(\Y_t^{(n)},\I_t^{(n)})_{t\geq0}$, i.e. the probability distribution over $\R^D$ defined by
\[\bar\pi^{(n)}(dy)=\sum_{i=1}^D\frac{\nu_i}{\sqrt{a_n}}\varphi^{(n)}\left(\frac y{\sqrt{a_n}}+\nu,i\right)dy.\]

\begin{corollary}[Convergence of the stationary distributions]
\label{coro:PitoNor}
The sequence of probability measures $(\bar\pi^{(n)})_{n\geq1}$ converges to $\mathscr N\left(0,\Sigma^{(0,1)}\right)$.
\end{corollary}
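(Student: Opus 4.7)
The plan is to show that every subsequential weak limit of $(\bar\pi^{(n)})_{n\geq 1}$ coincides with $\mathscr N(0,\Sigma^{(0,1)})$, combining a tightness estimate with the process convergence from Theorem~\ref{thm:CVEzzOu} and uniqueness of the Ornstein-Uhlenbeck invariant measure from Proposition~\ref{proposition:ErgodicityOU}.

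The main preparatory step is a uniform-in-$n$ second moment bound for $\bar\pi^{(n)}$. For this I would revisit the perturbed observable $\psi_{k,k}=\varphi_k^2$ introduced in the proof of Theorem~\ref{thm:CVEzzOu}, with $\varphi_k(x,i)=\sqrt{a_n}(x_k-\nu_k)+a_n^{-1/2}h_{k,i}$. A direct computation of $\mathcal{L}^{(n)}\psi_{k,k}$ shows that the Poisson equation~\eqref{eq:DefH} makes the two $O(a_n)$ contributions (one from the drift, one from the jumps) combine into $-2a_n(x_k-\nu_k)^2$, leaving only bounded correction terms of the form $2(\indic_{i=k}-x_k)h_{k,i}+\sum_{j\neq i}q(i,j)(h_{k,j}^2-h_{k,i}^2)$. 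Since $\pi^{(n)}$ is $\mathcal L^{(n)}$-invariant, the identity $\pi^{(n)}(\mathcal{L}^{(n)}\psi_{k,k})=0$ rearranges into
\[
\bar\pi^{(n)}(y_k^2)\;=\;a_n\,\pi^{(n)}\!\left((x_k-\nu_k)^2\right)\;\leq\;C
\]
for a constant $C$ independent of $n$, and Markov's inequality then yields tightness of $(\bar\pi^{(n)})_{n\geq 1}$ on $\R^D$.

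Extract now any weakly convergent subsequence $\bar\pi^{(n')}\to\bar\pi^\infty$ and initialise $(\Y^{(n')},\I^{(n')})$ from its joint invariant distribution, so that by stationarity $\mathscr{L}(\Y^{(n')}_t)=\bar\pi^{(n')}$ for every $t\geq 0$. Since the initial distributions $\bar\pi^{(n')}$ converge to $\bar\pi^\infty$, Theorem~\ref{thm:CVEzzOu} ensures that $\Y^{(n')}$ converges in distribution to the Ornstein-Uhlenbeck process $\Y^\infty$ generated by $\mathcal{L}_\text{O}$ and started from $\bar\pi^\infty$. Since the limit is continuous, reading off the one-dimensional marginal at a fixed $t$ forces $\mathscr{L}(\Y^\infty_t)=\bar\pi^\infty$ for every $t\geq 0$, so that $\bar\pi^\infty$ is invariant for the Ornstein-Uhlenbeck semigroup; Proposition~\ref{proposition:ErgodicityOU} then identifies $\bar\pi^\infty=\mathscr N(0,\Sigma^{(0,1)})$, and uniqueness of the subsequential limit gives convergence of the full sequence.

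The only genuine obstacle is the uniform second-moment estimate: without it, the $\sqrt{a_n}$-rescaling in the definition of $\bar\pi^{(n)}$ leaves no room for a naive compactness argument. Once that bound is in hand, the subsequent identification is essentially formal, the key observation being that stationarity passes to the weak limit of processes supplied by Theorem~\ref{thm:CVEzzOu}, and that the hypothesis of that theorem on convergence of initial distributions is precisely what the extraction of a subsequence delivers.
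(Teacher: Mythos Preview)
Your argument is correct and follows a genuinely different route from the paper's. Both approaches rest on the same uniform second-moment estimate $a_n\,\pi^{(n)}((x_k-\nu_k)^2)\leq C$, obtained by applying the generator to a quadratic test function corrected by the Poisson solution $h$; the paper uses $f^{(n)}(x,i)=x_k^2+2a_n^{-1}h_{k,i}x_k+2\nu_k x_k$, which is affinely equivalent to your $\psi_{k,k}$, so the computations are essentially identical.

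The strategies diverge after that point. The paper fixes a deterministic initial condition $\X_0^{(n)}=\nu$, $\mathscr L(\I_0^{(n)})=\nu^\top$, and exploits the Wasserstein contraction of Proposition~\ref{proposition:ErgodicityEZZ} with rate $\e^{-t}$ \emph{uniform in $n$} (this uniformity is the reason for choosing $\I_0^{(n)}\sim\nu^\top$). A triangle inequality in the Fortet--Mourier distance then sandwiches $\bar\pi^{(n)}$ between $\Y_t^{(n)}$ and $\mathscr N(0,\Sigma^{(0,1)})$, and one lets $n\to\infty$ followed by $t\to\infty$; the moment bound enters only to control $W(\delta_0,\bar\pi^{(n)})$ in the contraction estimate. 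Your approach is the classical soft one: the moment bound gives tightness, you start the process from stationarity, pass to the limit via Theorem~\ref{thm:CVEzzOu}, and identify the subsequential limit as the unique OU invariant law. Your route is arguably cleaner in that it does not rely on the uniform-in-$n$ exponential rate of Proposition~\ref{proposition:ErgodicityEZZ}, only on uniqueness from Proposition~\ref{proposition:ErgodicityOU}; the paper's route, on the other hand, is more quantitative and could in principle be pushed to yield a rate of convergence for $d_{\mathscr F}(\bar\pi^{(n)},\mathscr N(0,\Sigma^{(0,1)}))$.
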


\begin{proof}[Proof of Corollary~\ref{coro:PitoNor}]
Let  $n\geq1,t\geq0$ and
$$
\mathscr{F} = \left\{ f\in \mathscr C_c^2(\R^D) \ : \ \Vert f \Vert_\infty \leq  1 , \left|f(x) - f(y)\right| \leq |x-y|  \right\}.
$$
Up to a constant, $d_\mathscr{F}$ is the Fortet-Mourier distance and metrizes the weak convergence. Fix $t\geq 0$ and let $\X_0^{(n)}=\nu$ and $\mathscr L(\I^{(n)}_0)= \nu^\top$. From Theorem~\ref{thm:CVEzzOu},
$$
\lim_{n \to+\infty} d_{\mathscr{F}} \left(\Y_t^{(n)},\Y_t\right)=0,
$$
where $\Y$ is an Ornstein-Uhlenbeck process with generator $\mathcal L_\text{O}$ and initial condition $0$. Using the definition of $d_{\mathscr{F}}$ and Proposition~\ref{proposition:ErgodicityEZZ},
$$
d_{\mathscr{F}}\left(\Y_t^{(n)}, \bar{\pi}^{(n)}\right) \leq W\left((\Y^{(n)}_t,\I^{(n)}_t), \pi^{(n)}\right) \leq W\left(\delta_0 \otimes \nu, \pi^{(n)}\right) e^{-t}=W\left(\delta_0, \bar\pi^{(n)}\right) e^{-t}.
$$
Let us check that the term $W\left(\delta_0, \bar\pi^{(n)}\right)$ is uniformly bounded. To that end, let 
\[f^{(n)}(x,i) = x^2_k +  \frac{2}{a_n}  h_{k,i} x_k + 2 \nu_k x_k,\]
so that
\[\mathcal{L}^{(n)} f^{(n)}(x,i)=  - 2 x_k^2 +  \frac{2}{a_n} \left(\indic_{\{i=k\}} -x_k\right) h_{k,i} + 2 \nu_k \indic_{\{i=k\}}.\]
Since $\pi^{(n)}\left(\mathcal L^{(n)}f^{(n)}\right)=0$,
\[\int_E x_k^2 \pi^{(n)}(dx,di)- \nu_k^2 =\frac{1}{a_n} \left(h_{k,k} \nu_k - \int_E x_k h_{k,i} \pi^{(n)}(dx,di)\right).\]
Hence, with $C=\sum_{k=1}^D h_{k,k} \nu_k - \min_{i,j} h_{i,j}$, and since $\int_E x_k \pi^{(n)}(dx,di)=\nu_k$,
\begin{align*}
\int_E \|x-\nu\|^2_2 \pi^{(n)}(dx,di)
&= \sum_{k=1}^D \int_E (x_k- \nu_k)^2 \pi^{(n)}(dx,di)= \sum_{k=1}^D \int_E \left(x_k^2 - 2 \nu_k x_k + \nu_k^2 \right)\pi^{(n)}(dx,di) \\
&= \sum_{k=1}^D \int_E x_k^2 \pi^{(n)}(dx,di) -  \nu_k^2 = \frac1{a_n} \left( \sum_{k=1}^D h_{k,k} \nu_k - \int_E x_k h_{k,i} \pi^{(n)}(dx,di)\right) \\
&\leq \frac{1}{a_n} \left(\sum_{k=1}^D h_{k,k} \nu_k - \min_{i,j} h_{i,j}\right)\leq\frac C{a_n}.
\end{align*}
By H\"older's inequality,
\[W\left(\delta_0, \bar\pi^{(n)}\right)=\int_{\R}|y|\bar\pi^{(n)}(dy)=\int_E \sqrt{a_n}|x-\nu| \pi^{(n)}(dx,di)\leq\sqrt C.\]
Consequently to Proposition~\ref{proposition:ErgodicityOU},
\begin{align*}
d_{\mathscr{F}} \left(\bar{\pi}^{(n)}, \mathscr N\left(0,\Sigma^{(0,1)}\right)\right)
&\leq d_{\mathscr{F}} \left(\bar{\pi}^{(n)},\Y_t^{(n)} \right) + d_{\mathscr{F}} \left(\Y_t^{(n)},\Y_t \right)+  d_{\mathscr{F}} \left(\Y_t, \mathscr N\left(0,\Sigma^{(0,1)}\right) \right)\\
&\leq  2 \sqrt C e^{-t} + d_\mathscr{F} \left(\Y_t^{(n)},\Y_t\right).
\end{align*}
Then,
\[\limsup_{t\to+\infty}d_{\mathscr{F}} \left(\bar{\pi}^{(n)}, \mathscr N\left(0,\Sigma^{(0,1)}\right)\right)\leq 2 \sqrt C e^{-t},\]
which goes to 0 as $t\to+\infty$.
\end{proof}

\section{Complete graph}
\label{sec:Applications}
In this section, we consider a particular case of freezing Markov chain, where all the states are connected, and the jump rate to a state does not depend on the position of the chain. This example of Markov chain has already been studied in the literature, for instance in \cite{DS07}. Section~\ref{sec:Dirichlet} deals with the general $D$-dimensional case, for which most of the results of Section~\ref{sec:MarkovProcesses} can be written explicitly, notably the invariant measure of the exponential zig-zag process, which is a mixture of Dirichlet distributions (see Figure~\ref{fig:Dirichlet}). Section~\ref{sec:Turnover} studies more deeply the case $D=2$, where we can refine the speed of convergence provided in Proposition~\ref{proposition:ErgodicityEZZ}.

\subsection{General case}
\label{sec:Dirichlet}
Throughout this section, following \cite{DS07}, we assume that there exists a positive vector $\theta\in(0,+\infty)^D$ such that, for any $1\leq i,j\leq D$,
\begin{equation}
q(i,j)=\theta_j-|\theta|\indic_{i=j}, \quad|\theta|=\sum_{j=1}^D\theta_j,
\label{eq:qDirichlet}
\end{equation}
and we will recover \cite[Theorem~1.4]{DS07}. If $D=2$, let us highlight that an irreducible matrix $\Id+q$ automatically satisfies \eqref{eq:qDirichlet} (if $\Id+q$ is indecomposable then this is true as soon as $q(1,2)q(2,1)\neq0$).

\begin{figure}[htbp!]
\begin{center}
\hspace*{\fill}
\includegraphics[width=0.29\textwidth]{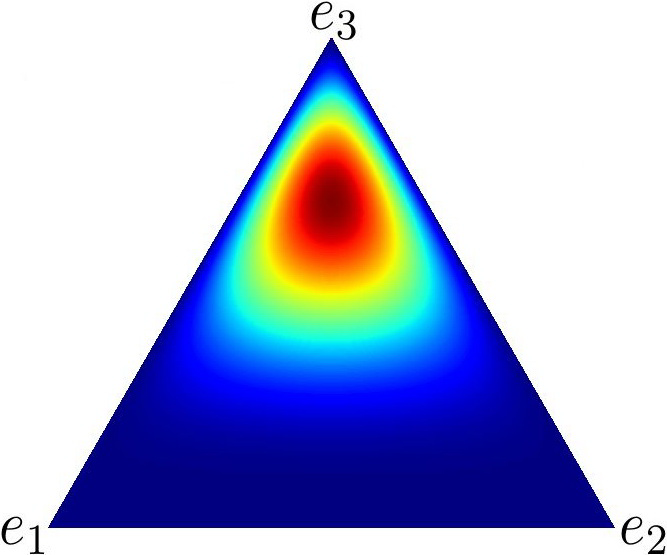}\hfill
\includegraphics[width=0.29\textwidth]{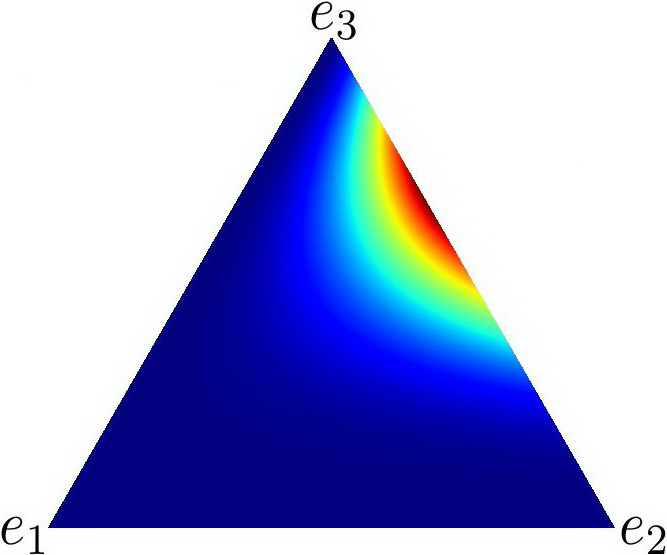}\hfill
\includegraphics[width=0.29\textwidth]{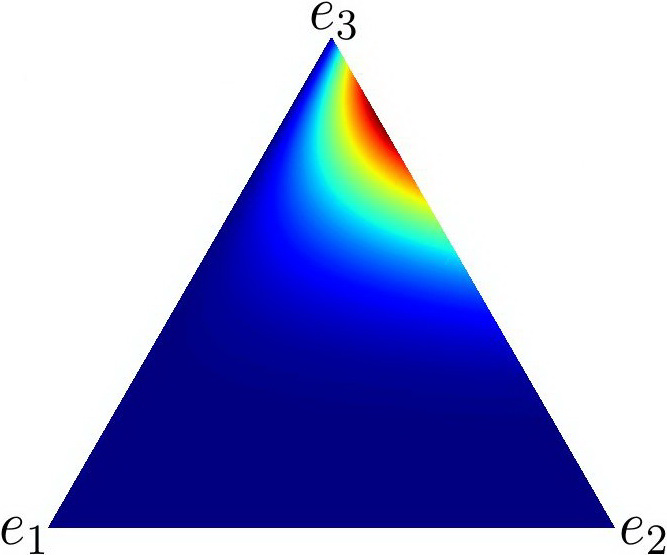}
\hspace*{\fill}
\end{center}
\caption{Probability density functions of $\pi_1=\Dir(2,2,5),\pi_2=\Dir(1,3,5),\pi_3=\Dir(1,2,6)$, for $\theta_1=1,\theta_2=2,\theta_3=5$.}
\label{fig:Dirichlet}\end{figure}

\begin{proposition}[Limit distribution for the complete graph in the non-standard setting]
\label{prop:DirichletEZZ}
Under Assumptions~\ref{assumption:FreezingSpeed} and \ref{assumption:NonStandard}, and if $q$ satisfies \eqref{eq:qDirichlet}, then $\nu_i=\theta_i|\theta|^{-1}$ and
\[\lim_{n\to+\infty}(x_n,i_n)=\sum_{i=1}^D\nu_i\Dir(a\theta+e_i)\otimes\delta_i\text{ in distribution}.\]
In particular,
\[\lim_{n\to+\infty}x_n=\Dir(a\theta)\text{ in distribution,}\quad\lim_{n\to+\infty}i_n=\nu^\top\text{ in distribution.}\]
\end{proposition}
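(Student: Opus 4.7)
The plan is to reduce to identifying the invariant measure of the exponential zig-zag process and then to write it explicitly. By Theorem~\ref{thm:NonStandard}, $(x_n,i_n)$ converges to the unique invariant probability $\pi$ of the process with generator $\mathcal{L}_\text{Z}$, and by Proposition~\ref{prop:Pi_PDE} this measure decomposes as $\pi=\sum_{i=1}^D\nu_i\pi_i\otimes\delta_i$ with densities $\varphi(\cdot,i)$ solving the system \eqref{eq:PDE-proof}. A preliminary computation: under \eqref{eq:qDirichlet}, the equation $\nu^\top q=0$ reads $\theta_j-|\theta|\nu_j=0$ for every $j$, so the claim $\nu_i=\theta_i/|\theta|$ is immediate.

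The heart of the proof is the ansatz $\pi_i=\Dir(a\theta+e_i)$, i.e.
\[\varphi(x,i)=\frac{\Gamma(a|\theta|+1)}{\Gamma(a\theta_i+1)\prod_{k\neq i}\Gamma(a\theta_k)}\prod_{k=1}^D x_k^{a\theta_k+\indic_{k=i}-1},\]
which I would verify by plugging it into \eqref{eq:PDE-proof}. Direct differentiation yields $x_k\partial_k\varphi(x,i)=(a\theta_k+\indic_{k=i}-1)\varphi(x,i)$, whence $\sum_k x_k\partial_k\varphi(x,i)=(a|\theta|+1-D)\varphi(x,i)$ and $\partial_i\varphi(x,i)=(a\theta_i/x_i)\varphi(x,i)$. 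Comparing the normalizing constants of $\Dir(a\theta+e_i)$ and $\Dir(a\theta+e_j)$ via $\Gamma(z+1)=z\Gamma(z)$ produces the key identity
\[\theta_j\,\varphi(x,j)=\theta_i\,\frac{x_j}{x_i}\,\varphi(x,i),\qquad 1\leq i,j\leq D,\]
which, combined with $q(j,i)=\theta_i-|\theta|\indic_{i=j}$, $\nu_j/\nu_i=\theta_j/\theta_i$ and $\sum_j x_j=1$, gives
\[\sum_{j=1}^D\frac{\nu_j}{\nu_i}\,a\,q(j,i)\,\varphi(x,j)=\frac{a\theta_i}{x_i}\,\varphi(x,i)-a|\theta|\,\varphi(x,i).\]
The four terms of \eqref{eq:PDE-proof} then telescope: the coefficients of $\varphi(x,i)$ and of $\varphi(x,i)/x_i$ both vanish identically. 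Since $\Dir(a\theta+e_i)$ is a probability measure and $\pi$ is unique by Proposition~\ref{proposition:ErgodicityEZZ}, this identifies $\pi_i$.

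The two marginal statements follow immediately: $i_n\to\nu^\top$ is the projection of $\pi$ on the discrete coordinate, while the continuous marginal $\sum_i\nu_i\Dir(a\theta+e_i)$ equals $\Dir(a\theta)$ by the classical identity $x_i\,\Dir(a\theta)(dx)=\nu_i\,\Dir(a\theta+e_i)(dx)$ (the same $\Gamma(z+1)=z\Gamma(z)$ manipulation) summed over $i$. The only non-routine step is guessing the form of $\pi_i$; once one anticipates a mixture of Dirichlet laws (consistent with the non-standard results of \cite{DS07}) and isolates the ratio identity above, the PDE verification is pure bookkeeping, and the special structure of \eqref{eq:qDirichlet} is precisely what makes all the cross terms collapse to a single quantity proportional to $\varphi(x,i)/x_i$.
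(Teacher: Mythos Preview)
Your proof is correct and follows essentially the same route as the paper: invoke Theorem~\ref{thm:NonStandard} and Proposition~\ref{prop:Pi_PDE}, then verify that the Dirichlet ansatz $\pi_i=\Dir(a\theta+e_i)$ solves \eqref{eq:PDE-proof}, and finally extract the marginals. The paper simply asserts that the proposed $\varphi$ ``is the unique (up to a multiplicative constant) solution of \eqref{eq:PDE-proof}'' without showing the computation, whereas you carry it out in detail via the ratio identity $\theta_j\varphi(x,j)=\theta_i(x_j/x_i)\varphi(x,i)$; your appeal to the uniqueness of $\pi$ from Proposition~\ref{proposition:ErgodicityEZZ} (rather than to uniqueness of solutions of the PDE itself) is in fact slightly cleaner than the paper's phrasing.
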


\begin{proof}[Proof of Proposition~\ref{prop:DirichletEZZ}]
If $q$ satisfies \eqref{eq:qDirichlet}, it is straightforward that its invariant distribution $\nu^\top$ is given by $\nu_i = \theta_i|\theta|^{-1}$ for any $1\leq i \leq D$. The convergence of $(i_n)_{n\geq1}$ to $\nu^\top$ and of $(x_n,i_n)_{n\geq1}$ to some distribution $\pi$ are direct corollaries of Theorems~\ref{thm:CVMarkovChain} and \ref{thm:NonStandard}. Moreover, Proposition~\ref{prop:Pi_PDE} holds, hence $\pi$ satisfies \eqref{eq:DecompositionPi} and it is clear that
\[\varphi(x,i)=\frac{\Gamma(|\theta|+1)}{\Gamma(\theta_i+1)\prod_{j\neq i}\Gamma(\theta_j)}x_i^{\theta_i}\prod_{j\neq i}x_j^{a\theta_j-1}=\nu_i\frac{\Gamma(|\theta|)}{\prod_{j=1}^D\Gamma(\theta_j)}x_i^{\theta_i}\prod_{j\neq i}x_j^{a\theta_j-1}\]
is the unique (up to a multiplicative constant) solution of \eqref{eq:PDE-proof}, which entails that
\[\pi=\sum_{i=1}^D\nu_i\Dir(a\theta+e_i)\otimes\delta_i.\]
Finally, if $\mathscr L(X,I)=\pi$, it is clear that $\mathscr L(I)=\nu^\top$ and that
\[\mathscr L(X)(dx)=\sum_{i=1}^D\nu_i\varphi(x,i)dx=\frac{\Gamma(|\theta|)}{\prod_{j=1}^D\Gamma(\theta_j)}\prod_{j\neq i}x_j^{a\theta_j-1}dx=\Dir(\theta)(dx).\]
\end{proof}

In the framework of \eqref{eq:qDirichlet}, it is also possible to obtain explicitly the solution of the Poisson equation related to $q$ as well as the covariance matrix of the limit distribution in the standard setting. This is the purpose of the following result, whose proof is straightforward using Theorem~\ref{thm:Standard} together with the expressions \eqref{eq:DefSigma} and \eqref{eq:DefH}.

\begin{proposition}[Limit distribution for the complete graph in the standard setting]
\label{prop:DirichletOU}
Under Assumptions~\ref{assumption:FreezingSpeed} and \ref{assumption:Standard}, and if $q$ satisfies \eqref{eq:qDirichlet}, then $\nu=|\theta|^{-1}\theta$ and $h_i=|\theta|^{-1}e_i$ and
\[\lim_{n\to+\infty}y_n=\mathscr N\left(0,\Sigma^{(p,\Upsilon)}\right)\text{ in distribution},\quad \text{with }\Sigma^{(p,\Upsilon)}_{k,l}=\left\{\begin{array}{ll}
-\frac{2-p}{1+\Upsilon}\nu_k\nu_l&\text{ if }k\neq l\\
\frac{2-p}{1+\Upsilon}\nu_k(1-\nu_k)&\text{ if }k= l
\end{array}\right..
\]
\end{proposition}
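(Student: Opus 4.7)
The plan is to apply Theorem~\ref{thm:Standard} directly; everything then reduces to evaluating the three ingredients $\nu$, $h$ and $\Sigma^{(p,\Upsilon)}$ for the complete-graph generator \eqref{eq:qDirichlet}. Since this is a straightforward substitution, I would not expect a genuine obstacle, only some careful bookkeeping.

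First, I would check that $\nu_i = \theta_i/|\theta|$ solves $\nu^\top q = 0$: for any $j$,
\[\sum_i \nu_i q(i,j) = \theta_j\sum_i \nu_i - |\theta|\nu_j = \theta_j - \theta_j = 0.\]
Next, I would solve the Poisson equation \eqref{eq:DefH} with the ansatz $h_i = c\, e_i$. Since $q(i,j) = \theta_j$ for $j \neq i$, this gives
\[\sum_{j \neq i}\theta_j(c\, e_j - c\, e_i) = c\bigl(\theta - |\theta|e_i\bigr) = c|\theta|(\nu - e_i),\]
which forces $c = |\theta|^{-1}$, hence $h_{k,i} = |\theta|^{-1}\indic_{k=i}$ as claimed.

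Finally, I would substitute these expressions into \eqref{eq:DefSigma}. The key inner sum becomes
\[\sum_j q(i,j)(h_{l,j}-h_{l,i})(h_{k,j}-h_{k,i}) = |\theta|^{-2}\sum_{j \neq i}\theta_j(\indic_{l=j}-\indic_{l=i})(\indic_{k=j}-\indic_{k=i}).\]
Expanding the product of indicators and averaging over $i$ against $\nu_i$, after a case analysis on $k=l$ vs.\ $k \neq l$ one obtains a scalar multiple of $\nu_k(1-\nu_k)\indic_{k=l} - \nu_k\nu_l\indic_{k\neq l}$. The explicit term $p(\nu_k - \indic_{i=k})(\nu_l - \indic_{i=l})$ averages against $\nu$ to exactly the same structure, so the two contributions combine into the stated form of $\Sigma^{(p,\Upsilon)}$. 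Theorem~\ref{thm:Standard} then delivers the convergence of $(y_n)_{n\geq 1}$ to $\mathscr N(0,\Sigma^{(p,\Upsilon)})$. The only slightly delicate step is the bookkeeping in this final case analysis.
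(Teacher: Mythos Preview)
Your approach is exactly the one the paper indicates: it states that the proof is straightforward from Theorem~\ref{thm:Standard} together with the explicit formulas \eqref{eq:DefSigma} and \eqref{eq:DefH}, and you carry out precisely that substitution (verifying $\nu$, solving the Poisson equation for $h$, and plugging into $\Sigma^{(p,\Upsilon)}$). There is nothing to add.
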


Finally, let us emphasize the fact that Corollary~\ref{coro:PitoNor} provides an interesting convergence of rescaled Dirichlet distributions, when considered in the particular case of the complete graph.

\begin{corollary}[Convergence of the rescaled Dirichlet distribution to a Gaussian law]
\label{coro:DirichletPitoNor}
For any vector $\theta\in(0,+\infty)^D$, if $(X_n)_{n\geq1}$ is a sequence of independent random variables such that $\mathscr L(X_n)=\mathscr D(a_n\theta)$, then
\[\lim_{n\to+\infty}\sqrt{a_n}\left(X_n-\nu\right) =\mathscr N\left(0,\diag(\nu)-\nu\nu^\top\right)\text{ in distribution}.\]
\end{corollary}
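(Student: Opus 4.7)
The plan is to deduce this corollary as a direct consequence of the machinery already established, rather than proving it from scratch via, say, the Gamma representation of Dirichlet vectors and the delta method. The key observation is that the law of $\sqrt{a_n}(X_n-\nu)$ with $X_n\sim\Dir(a_n\theta)$ is precisely (the first margin of) the rescaled invariant measure $\bar\pi^{(n)}$ appearing in Corollary~\ref{coro:PitoNor}, once the exponential zig-zag process is specialised to the complete graph.

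First I would identify the invariant measure of the exponential zig-zag process when $q$ is the complete-graph generator of \eqref{eq:qDirichlet}. Proposition~\ref{prop:DirichletEZZ}, applied with jump rate parameter $a_n$ in place of $a$, yields that the invariant distribution $\pi^{(n)}$ of $(\X_t^{(n)},\I_t^{(n)})$ admits the decomposition
\[\pi^{(n)}=\sum_{i=1}^D\nu_i\,\Dir(a_n\theta+e_i)\otimes\delta_i,\]
whose first margin is exactly $\Dir(a_n\theta)$. Consequently, letting $(X_n,I_n)\sim\pi^{(n)}$, we have $X_n\sim\Dir(a_n\theta)$, and by the definition of $\bar\pi^{(n)}$ in the statement preceding Corollary~\ref{coro:PitoNor}, the law of $\sqrt{a_n}(X_n-\nu)$ coincides with $\bar\pi^{(n)}$.

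Next I would apply Corollary~\ref{coro:PitoNor} directly: since $a_n\to+\infty$, we obtain convergence of $\bar\pi^{(n)}$ to $\mathscr N(0,\Sigma^{(0,1)})$ in distribution, hence the same convergence for $\sqrt{a_n}(X_n-\nu)$. It remains to identify $\Sigma^{(0,1)}$ in the complete-graph case. Proposition~\ref{prop:DirichletOU} gives the explicit form of $\Sigma^{(p,\Upsilon)}$ under \eqref{eq:qDirichlet} for every $(p,\Upsilon)$, since the formula only depends on $(q,h,\nu)$; specialising to $p=0$ and $\Upsilon=1$ yields
\[\Sigma^{(0,1)}_{k,l}=\begin{cases}-\nu_k\nu_l&\text{if }k\neq l,\\ \nu_k(1-\nu_k)&\text{if }k=l,\end{cases}\]
that is, $\Sigma^{(0,1)}=\diag(\nu)-\nu\nu^\top$, as required.

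There is no genuinely hard step here: all the work has been done in Propositions~\ref{prop:DirichletEZZ} and \ref{prop:DirichletOU} and in Corollary~\ref{coro:PitoNor}. The only point that deserves a careful check is the compatibility of the normalisations, namely that the first margin of the invariant distribution of $(\Y^{(n)},\I^{(n)})$, read off from the density $\varphi^{(n)}$, is indeed the image measure of $\Dir(a_n\theta)$ under $x\mapsto\sqrt{a_n}(x-\nu)$; this is immediate from the change-of-variables formula on the simplex. The resulting statement can be viewed as an intrinsic by-product of the probabilistic interpretation of $\Dir(a_n\theta)$ as the invariant measure of the exponential zig-zag process: accelerating the switching ($a_n\to+\infty$) provides a CLT for rescaled Dirichlet vectors, with the Gaussian limit having the covariance of the multinomial-type matrix $\diag(\nu)-\nu\nu^\top$.
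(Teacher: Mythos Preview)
Your proposal is correct and follows exactly the approach the paper intends: the paper does not give a separate proof of this corollary, but introduces it with the sentence ``Corollary~\ref{coro:PitoNor} provides an interesting convergence of rescaled Dirichlet distributions, when considered in the particular case of the complete graph'', which is precisely the argument you spell out. Your write-up in fact supplies more detail than the paper (the identification of the first margin of $\pi^{(n)}$ via Proposition~\ref{prop:DirichletEZZ} and the evaluation of $\Sigma^{(0,1)}$ via Proposition~\ref{prop:DirichletOU}), all of which is correct.
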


\subsection{The turnover algorithm}
\label{sec:Turnover}
In this subsection, we consider the turnover algorithm introduced in \cite{EV16}. This algorithm studies empirical frequency of \emph{heads} when a coin is turned over with a certain probability, instead of being tossed as usual. The authors provide various convergences in distribution for this proportion, depending on the asymptotic behavior of the turnover probability, which corresponds to $(p_n)_{n\geq1}$ in the present paper. However, this turnover algorithm can be seen as a particular case of freezing Markov chain, and can then be written as the stochastic algorithm defined in \eqref{eq:DefXAlgoSto}, in the special case $D=2$. Since $x_n(1)=1-x_n(2)$, there is only one relevant variable in this section, which belongs to $[0,1]$:
\begin{equation}
x_n=x_n(1)=\gamma_n\sum_{k=1}^n\indic_{\{i_k=1\}}.
\label{eq:TurnoverAlgo}
\end{equation}

Note that we are in the framework of Section~\ref{sec:Dirichlet}, with $\theta_1=q(2,1)$ and $\theta_2=q(1,2)$, and that Propositions~\ref{prop:DirichletEZZ} and \ref{prop:DirichletOU} hold. In particular, we have $\nu_i=\theta_i(\theta_1+ \theta_2)^{-1}$. Then, for any $y\in\R$ and $(x,i)\in[0,1]\times\{1,2\}$, the infinitesimal generators defined in \eqref{eq:GeneratorOU} and \eqref{eq:GeneratorEZZ} write
\begin{equation}
\label{eq:GeneratorOU2}
\mathcal L_\text{O}f(y)=-yf'(y)+\frac {2-p}{1+\Upsilon}\nu_1(1-\nu_1)f''(y)
\end{equation}
and
\begin{equation}
\label{eq:GeneratorEZZ2}
\mathcal L_\text{Z}f(x,i)=(\indic_{\{i=1\}}-x)\partial_xf(x,i)+a\theta_{3-i}[f(x,3-i)-f(x,i)].
\end{equation}

\begin{remark}[Comparison with \cite{EV16}]
In the present paper, we recover \cite[Theorems~1 and 2]{EV16} as direct consequences of Theorems~\ref{thm:NonStandard} and \ref{thm:Standard}. The aforementioned results are extended by allowing $q(1,2)\neq q(2,1)$, but mostly by obtaining results for general sequences $(p_n)_{n\geq 1}$ while \cite{EV16} deals only with $p_n=an^{-\theta}$ for positive constants $a$ and $\theta$. It should be noted that, in order to perfectly mimic the algorithm of the aforementioned article, one should consider the chain $x^\star_n=\gamma_n\sum_{k=1}^n\left(\indic_{\{i_k=1\}}-\indic_{\{i_k=2\}}\right)$, which evolves in $[-1,1]$. The behavior of this sequence being completely similar to the one we are studying, we chose to work with \eqref{eq:TurnoverAlgo} for the sake of consistence.

However, the reader should notice that the invariant measure of the process generated by \eqref{eq:GeneratorOU2} is a Gaussian distribution with variance $\Sigma^{(p,\Upsilon)}_{1,1}$. In the particular case where $p=0$ and $\theta_1=\theta_2$, this variance writes
\[\Sigma^{(0,\Upsilon)}_{1,1}=\frac1{2(1+\Upsilon)},\]
which is, at first glance, different from the variance provided in \cite{EV16}, which is (under our notation)
\[\sigma^2=\frac1{a^2(1+\Upsilon)}.\]
The factor $a^2$ comes from the fact that \cite{EV16} studies the behavior of $a^{-1}y_n$. The factor 2 comes from the choice of normalization mentioned earlier, since $x_n\in[0,1]$ and $x_n^\star\in[-1,1]$.
\end{remark}

Whenever $D=2$, it is easier to visualize the dynamics of $(\X,\I)$ (see Figure~\ref{fig:EZZTurnover}), and we can improve the results of Proposition~\ref{proposition:ErgodicityEZZ} concerning the speed of convergence of the exponential zig-zag process to its stationary measure $\pi$.

\begin{figure}[htbp!]
\centering
\begin{tikzpicture}[xscale=2,yscale=3.2]
\draw[->] (0,0) node[left]{0} -- (7.2,0) node[below]{$t$};
\draw[->] (0,0) -- (0,1.2);
\draw (0,1) node[left]{1} -- (7.2,1);
\draw[blue] (0,0.5) node[left]{$\X_0$};
\draw[domain=0:1,smooth,blue] plot (\x,{0.5*exp(-\x)});
\draw [decorate,decoration={brace,amplitude=10pt,mirror},yshift=-0.2cm] (0,0) -- (1,0) node[midway,yshift=-0.5cm]{\footnotesize{$I_t=2$}};
\draw[dashed] (1,0) node[below]{$T_1$} -- (1,.1839);
\draw[domain=1:4,smooth,red] plot (\x,{1-(1-.1839)*exp(-(\x-1))});
\draw [decorate,decoration={brace,amplitude=10pt,mirror},yshift=-0.2cm] (1,0) -- (4,0) node[midway,yshift=-0.5cm]{\footnotesize{$I_t=1$}};
\draw[dashed] (4,0) node[below]{$T_2$} -- (4,.9594);
\draw[domain=4:5.5,smooth,blue] plot (\x,{.9594*exp(-(\x-4))});
\draw [decorate,decoration={brace,amplitude=10pt,mirror},yshift=-0.2cm] (4,0) -- (5.5,0) node[midway,yshift=-0.5cm]{\footnotesize{$I_t=2$}};
\draw[dashed] (5.5,0) node[below]{$T_3$} -- (5.5,.2141);
\draw[domain=5.5:7.2,smooth,red] plot (\x,{1-(1-.2141)*exp(-(\x-5.5))}) node[right]{$\X_t$};
\end{tikzpicture}
\caption{Typical path of the exponential zig-zag process when $D=2$.}
\label{fig:EZZTurnover}
\end{figure}
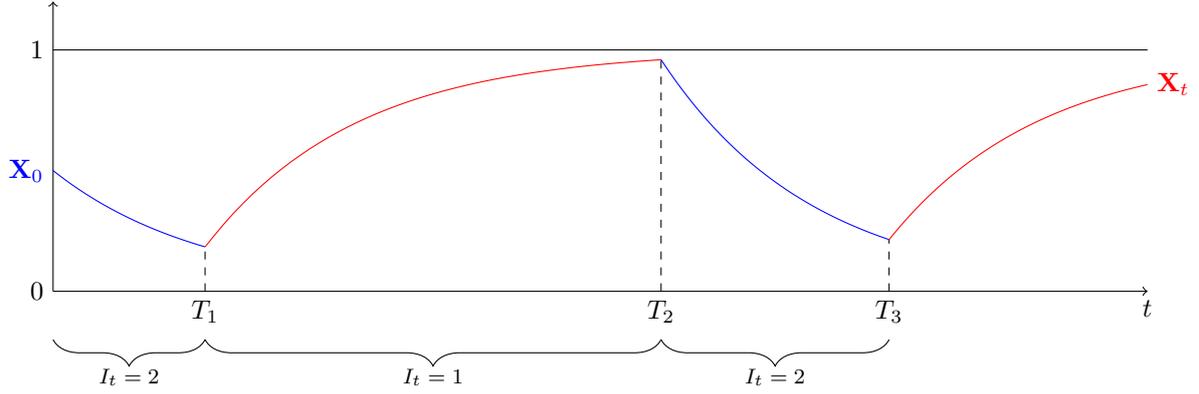

\begin{proposition}[Ergodicity when $D=2$]\label{proposition:ErgodicityEZZ2}
The Markov process $(\X_t,\I_t)_{t\geq0}$ generated by $\mathcal L_\text{Z}$ in \eqref{eq:GeneratorEZZ2}, with values in $[0,1]\times\{1,2\}$, admits a unique stationary distribution
\[\pi=\frac{\theta_1}{\theta_1+\theta_2}\beta(a\theta_1+1,a\theta_2)\otimes\delta_1+\frac{\theta_2}{\theta_1+\theta_2}\beta(a\theta_1,a\theta_2+1)\otimes\delta_2.\]

Moreover, let $v=a(\theta_1\vee\theta_2)$, then
\[W((\X_t,\I_t),\pi)\leq
\left\{\begin{array}{ll}
\left(2+\frac{2v}{|1-v|}\right)\e^{-(1\wedge v) t}&\text{if }v\neq1\\
(2+t)\e^{-t}&\text{if } v=1\\
W((\X_0,\I_0),\pi)\e^{-t}&\text{if }\mathscr L(\I_0)=\frac{\theta_1}{\theta_1+\theta_2}\delta_1+\frac{\theta_2}{\theta_1+\theta_2}\delta_2
\end{array}\right.
.\]
\end{proposition}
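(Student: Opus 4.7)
The proof splits naturally into identifying the stationary measure and quantifying the rate of convergence.

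For the first part, I would specialize Proposition~\ref{prop:DirichletEZZ} to $D=2$. Any two-state irreducible rate matrix $q$ automatically satisfies the complete-graph condition \eqref{eq:qDirichlet} with $\theta_1=q(2,1)$ and $\theta_2=q(1,2)$, so the formula $\sum_i \nu_i \Dir(a\theta+e_i)\otimes\delta_i$ applies. Under the bijection $x\leftrightarrow(x,1-x)$ between $[0,1]$ and the $2$-simplex, the two-dimensional Dirichlet distribution with parameters $(\alpha_1,\alpha_2)$ becomes the Beta distribution $\beta(\alpha_1,\alpha_2)$, which yields the claimed mixture form of $\pi$.

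For the quantitative part, the plan is to build an explicit coupling of the process with a stationary copy $(\tilde\X,\tilde\I)\sim\pi$ and estimate the Wasserstein distance through it. When $\mathscr L(\I_0)=\nu^\top$, the marginal laws of $\I_t$ and $\tilde\I_t$ coincide for every $t\geq 0$, so one may couple $\I_0=\tilde\I_0$ optimally and drive both discrete components by the same Poisson clocks: then $\I_t=\tilde\I_t$ for all $t$, and between jumps the continuous components satisfy $(\X_t-\tilde\X_t)'=-(\X_t-\tilde\X_t)$ while jumps leave the continuous components untouched, giving $|\X_t-\tilde\X_t|=|\X_0-\tilde\X_0|\e^{-t}$. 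Taking infimum over the initial coupling delivers the third bound.

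For the general case I plan to use a sticky coupling: introduce two independent Poisson processes triggering ``jump to state $2$'' at rate $a\theta_2$ and ``jump to state $1$'' at rate $a\theta_1$, applied simultaneously to both $\I$ and $\tilde\I$. This preserves the marginal dynamics of each discrete component and guarantees that once $\I_t=\tilde\I_t$ they stay equal forever. Let $T$ be the coupling time; when $\I_0\neq\tilde\I_0$ it is exponentially distributed. Before $T$ one has the trivial bound $|\X_t-\tilde\X_t|\leq 1$ since $\X_t,\tilde\X_t\in[0,1]$, and after $T$ the argument from the first case yields contraction $|\X_t-\tilde\X_t|\leq\e^{-(t-T)}$. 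Taking expectations,
\[\E[|\X_t-\tilde\X_t|]+\P(\I_t\neq\tilde\I_t)\leq 2\,\P(T>t)+\E\bigl[\e^{-(t-T)}\indic_{T\leq t}\bigr],\]
and an explicit evaluation of the convolution of two exponentials produces a bound of the announced form $(2+2v/|1-v|)\e^{-(1\wedge v)t}$.

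The main obstacle I foresee is matching the exact constants in the statement. The natural coalescence rate produced by the sticky coupling is $a(\theta_1+\theta_2)\geq v=a(\theta_1\vee\theta_2)$, so the resulting bound is at least as strong as the announced one; nevertheless some care is needed to recast the constants into the exact form $(2+2v/|1-v|)$. The borderline case $v=1$ must be treated separately, since the convolution $\int_0^t\e^{-(t-s)}v\e^{-vs}\,ds$ degenerates into $t\e^{-t}$ when both exponential rates coincide, which is precisely what produces the polynomial correction $(2+t)\e^{-t}$ in that regime.
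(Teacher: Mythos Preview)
Your proposal is correct and follows essentially the same route as the paper: specialize the Dirichlet result to $D=2$ for the stationary law, then couple the discrete components so that they coalesce at the first jump and exploit the exponential contraction of the flow thereafter. The only cosmetic difference is that the paper lets the two copies evolve \emph{independently} until $\I=\tilde\I$ (and observes that for $D=2$ the very first jump forces coalescence), whereas you drive both by common ``go to $j$'' clocks; either construction yields the same coalescence time, and your remark that its rate is actually $a(\theta_1+\theta_2)\geq v$ is spot on---the paper simply records $\mathscr E(v)$, which still gives the stated (slightly weaker) bound.
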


Since the inter-jump times of the exponential zig-zag process are spread-out, it is also possible to show convergence in total variation with a method similar to \cite[Proposition~2.5]{BMPPR15}. Note that, following Proposition~\ref{prop:DirichletEZZ}, the limit distribution of $(X_t)_{t\geq0}$ is the first margin of $\pi$, namely $\beta(a\theta_1,a\theta_2)$.

\begin{proof}[Proof of Proposition~\ref{proposition:ErgodicityEZZ2}]
Without loss of generality, let us assume that $\theta_1\geq\theta_2$, that is $v=a\theta_1$. Using Proposition~\ref{prop:DirichletEZZ}, it is clear that $\pi$ is the limit distribution of $(\X,\I)$. Let us turn to the quantification of the ergodicity of the process. Since the flow is exponentially contracting at rate 1, one can expect the Wasserstein distance of the spatial component $\X$ to decrease exponentially. The only issue is to bring $\I$ to its stationary measure first. So, consider the Markov coupling $\left((\X,\I),(\widetilde \X,\widetilde \I)\right)$ of $\mathcal L_\text{Z}$ on $E\times E$, which evolves independently if $\I\neq\tilde\I$, and else follows the same flow with common jumps. We set $T_0=0$ and denote by $T_n$ the epoch of its $n^\text{th}$ jump. If $\I_0\neq\widetilde \I_0$, the first jump is not common a.s., but in any case, since $D=2,$ $\I_{T_1}=\widetilde \I_{T_1}$ a.s. and $\mathscr L(T_1)=\mathscr E(v)$. Consequently,
\begin{align*}
\E\left[|(\X_t,\I_t)-(\tilde \X_t,\tilde \I_t)|\right]&=\E\left[|\X_t-\widetilde \X_t|\right]+\P(\I_t\neq\widetilde \I_t)\\
&\leq\int_0^t\E\left[\left.|\X_t-\widetilde \X_t|\right|T_1=s\right]v\e^{-vs}ds+\left(\E\left[\left.|\X_t-\widetilde \X_t|\right|T_1>t\right]+1\right)\P(T_1>t)\\
&\leq2\e^{-vt}+\int_0^t\E\left[|\X_s-\widetilde \X_s|\right]\e^{-(t-s)} v\e^{- vs}ds\\
&\leq2\e^{-v t}+v\e^{-t}\int_0^t\e^{(1-v)s}ds\\
&\leq \left[\left(2+\frac{v}{1-v}\right)\e^{-v t}-\frac{v}{1-v}\e^{-t}\right]\indic_{\{v\neq1\}}+(2+v t)\e^{-v t}\indic_{\{v=1\}}\\
&\leq \left(2+\frac{2v}{|1-v|}\right)\e^{-(1\wedge v) t}\indic_{\{v\neq1\}}+(2+t)\e^{-t}\indic_{\{\lambda=1\}}.
\end{align*}
Note that if $\mathscr L(\I_0)=\mathscr L(\tilde \I_0)$, let $\I_0=\tilde\I_0$, so that the coupling $\left((\X,\I),(\widetilde \X,\widetilde \I)\right)$ always has common jumps and
\[|\X_t-\widetilde \X_t|=|\X_0-\widetilde \X_0|\e^{-t}\text{ a.s.}\]
Letting $(\X_0,\widetilde \X_0)$ be the optimal Wasserstein coupling entails Wasserstein contraction. The results above hold for any initial conditions $(\tilde \X_0,\tilde \I_0)$. Then, let $\mathscr L(\tilde \X_0,\tilde \I_0)=\pi$ to achieve the proof; in particular, $\mathscr L(\tilde\I_0)=\nu^\top=\frac{\theta_1}{\theta_1+\theta_2}\delta_1+\frac{\theta_2}{\theta_1+\theta_2}\delta_2$.
\end{proof}

\section{Proofs}
\label{sec:proofs}
In this section, we provide the proofs of the main results of this paper that were stated throughout Section~\ref{sec:MainResults}.

\begin{proof}[Proof of Theorem~\ref{thm:CVMarkovChain}]
Under Assumption~\ref{assumption:FreezingSpeed}, let us first assume that $p>0$. The matrix $(\Id+q)$ is irreducible, and so is $(\Id+pq)$. Moreover, $\nu^\top$ is also the invariant measure of $pq$, and Perron-Frobenius Theorem entails that there exist $C>0$ and $\rho \in (0,1)$ such that for every $n\geq 1$ and $i\in\{1,\dots, D\}$,
$$
d_{\text{TV}}\left( \delta_i(\Id+pq)^n , \nu^\top \right) \leq C \rho^n.
$$
Now, let us prove that $(i_n)_{n\geq1}$ is an asymptotic pseudotrajectory of the dynamical system induced by $\Id+pq$. The limit set of such a system being contained in every global attractor (see \cite[Theorems~6.9 and 6.10]{Ben99}), we have
\begin{align}
d_\text{TV}\left(\delta_{i_n}(\Id+pq),i_{n+1}\right)&=d_\text{TV}\left(\delta_{i_n}(\Id+pq),\delta_{i_n}(\Id+q_n)\right)\notag\\
&\leq|p_n-p|+\sum_{j\neq i_n}|r_n(i_n,j)|\leq|p_n-p|+\sum_{i,j=1}^D|r_n(i,j)|,
\label{eq:proofCVMarkovChain1}
\end{align}
and the right-hand side of \eqref{eq:proofCVMarkovChain1} converges to 0, which ends the proof.

The case $p=0$ is a mere application of \cite[Proposition~3.13]{BBC17}.
\end{proof}

\subsection{Asymptotic pseudotrajectories in the non-standard setting}
\label{sec:NonStandard}

In this section, we prove Theorem~\ref{thm:NonStandard} using results from \cite{BBC17}, based on the theory of asymptotic pseudotrajectories for inhomogeneous-time Markov chains. Indeed, with the convention $\sum_{k=1}^0=0$, let
\begin{equation}
\tau_n=\sum_{k=1}^n\gamma_k,\quad m(t)=\sup\{k\geq0:\tau_k\leq t\},
\label{eq:DefTauM}
\end{equation}
and define the piecewise-constant processes
\begin{equation}
X_t=\sum_{n=1}^\infty x_n\indic_{\tau_n\leq t<\tau_{n+1}},\quad I_t=\sum_{n=1}^\infty i_n\indic_{\tau_n\leq t<\tau_{n+1}}.
\label{eq:interpolatedprocess}
\end{equation}

We shall show that, as $t\to+\infty$, the process $(X_t,I_t)_{t\geq0}$ converges in a way (see Figure~\ref{fig:interpolation}) to the exponential zig-zag process $(\X_t,\I_t)_{t\geq0}$ solution of \eqref{eq:SdeEZZ}, that we already studied in Section~\ref{sec:EZZprocess}. To that end, let $(P_t)_{t\geq0}$ be the Markov semigroup of $(\X,\I)$, $N_1=(2,\dots,2,0)$ and
\begin{equation}
\mathscr F=\left\{f\in\mathcal D(\mathcal L_\text{Z})\cap\mathscr C^{N_1}_b:\mathcal L_\text{Z}f\in\mathcal D(\mathcal L_\text{Z}),\|\mathcal L_\text{Z}f\|_\infty+\|\mathcal L_\text{Z}\mathcal L_\text{Z}f\|_\infty+\sum_{j=0}^{N_1}\|  f^{(j)} \|_\infty\leq1\right\}.
\label{eq:defF}
\end{equation}
Note that convergence with respect to $d_{\mathscr F}$ implies convergence in distribution (see \cite[Lemma~5.1]{BBC17}).

\begin{figure}[htbp!]
\begin{center}
\includegraphics[width=\textwidth]{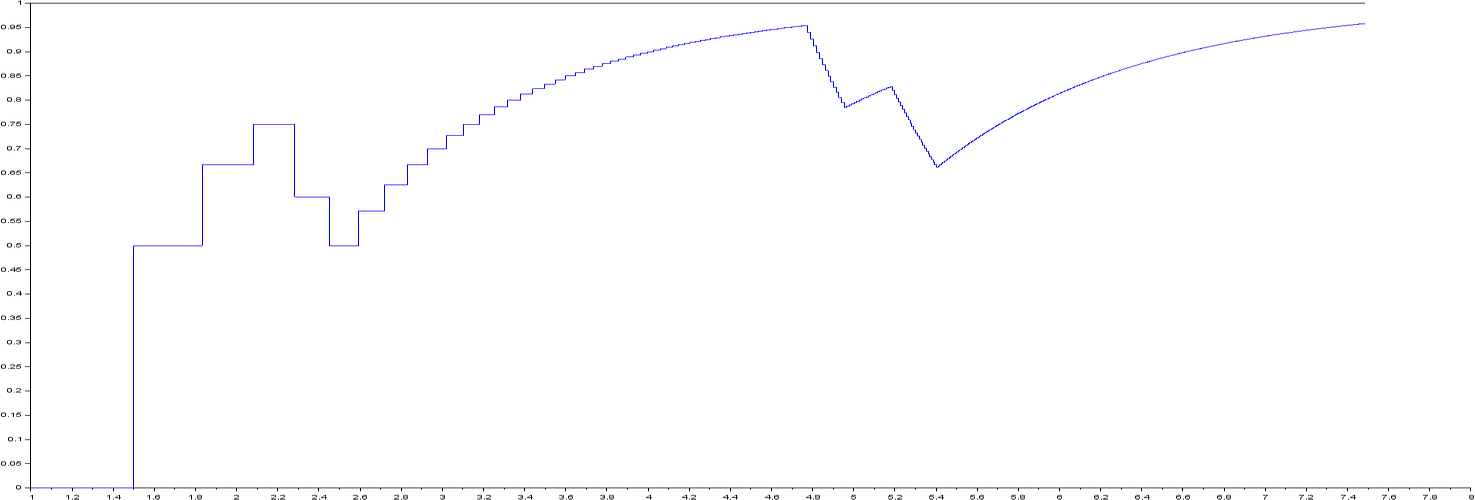}
\end{center}
\caption{Sample path of the process $(X_t)_{t\geq0}$ in the setting of Section~\ref{sec:Turnover} for $a=1$, $q(1,2)=\frac13$ and $q(2,1)=\frac23$.}
\label{fig:interpolation}\end{figure}

\begin{lemma}[Asymptotic pseudotrajectory for non-standard fluctuations]\label{lem:APTNonStandard}
Under the assumptions of Theorem~\ref{thm:NonStandard}, the sequence of probability distributions $(\mu_t)_{t\geq0}$ is an asymptotic pseudotrajectory of $(P_tf)_{t\geq0}$ with respect to $d_\mathscr F$.

Moreover, if there exist positive constants $A\geq1,\theta\leq1$ such that
\[\max_{j\neq i}(|r_n(i,j)|)\leq \frac A{n^{\theta}},\]
then, for any $v<a\rho(1+a\rho)^{-1}$, there exists a positive constant $C$ such that
\begin{equation}
d_{\mathscr F\cap\mathscr G}(\mathscr L(X_t,I_t),\pi)\leq C\e^{-vt}.
\label{eq:ProofThmNonStandard1}
\end{equation}

Moreover, the sequence of processes $\left((X_{n+t},I_{n+t})_{t\geq0}\right)_{n\geq1}$ converges in distribution, as $t\to+\infty$, toward $(\X^\pi_t,\I^\pi_t)_{t\geq0}$ in the Skorokhod space, where $(\X^\pi,\I^\pi)$ is a process generated by $\mathcal L_\text{Z}$ with initial condition $\pi$.
\end{lemma}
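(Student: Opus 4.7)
The plan is to apply \cite[Theorem~2.6]{BBC17}, which packages the asymptotic pseudotrajectory property of an inhomogeneous Markov chain with respect to a continuous semigroup into a purely local one-step comparison, combined with mild regularity assumptions on the test-function class. The long-time ingredient, namely exponential ergodicity of $(P_t)_{t\geq 0}$ toward $\pi$, is already supplied by Proposition~\ref{proposition:ErgodicityEZZ}, so the bulk of the work lies in the local comparison.

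The central computation is a one-step Taylor expansion of $f\in\mathscr F$ in the continuous variable. Because $\gamma_n=1/n$, one has exactly $x_{n+1}-x_n=\gamma_{n+1}(e_{i_{n+1}}-x_n)$, so
\[f(x_{n+1},i_{n+1})=f(x_n,i_{n+1})+\gamma_{n+1}(e_{i_{n+1}}-x_n)\cdot\nabla_xf(x_n,i_{n+1})+O\!\left(\gamma_{n+1}^2\|f^{(N_1)}\|_\infty\right).\]
Taking conditional expectation and using $q_n(i,j)=p_n(q(i,j)+r_n(i,j))$ together with the asymptotic $p_n/\gamma_{n+1}=a+o(1)$ from Assumption~\ref{assumption:NonStandard}, the discrete part reconstructs the switching term of $\mathcal L_\text{Z}$, while the deterministic Taylor term reconstructs the transport term. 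The residual is
\[\E[f(x_{n+1},i_{n+1})\mid\mathcal F_n]-f(x_n,i_n)-\gamma_{n+1}\mathcal L_\text{Z}f(x_n,i_n)=O\!\left(\gamma_{n+1}^2+p_nR_n+|p_n-a\gamma_{n+1}|\right)=O\!\left(\gamma_{n+1}^2+\gamma_nR_n\right),\]
uniformly over $f\in\mathscr F$. This is precisely the input required by \cite[Theorem~2.6]{BBC17}, the extra boundedness of $\mathcal L_\text{Z}f$ and $\mathcal L_\text{Z}\mathcal L_\text{Z}f$ in the definition of $\mathscr F$ being exactly what allows the local estimate to be iterated along the semigroup. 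The asymptotic pseudotrajectory property follows at once.

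For the quantitative bound \eqref{eq:ProofThmNonStandard1}, the local error sharpens to $O(n^{-2}+n^{-1-\theta})$ under $R_n\leq An^{-\theta}$; one then splits
\[d_{\mathscr F}\!\left(\mathscr L(X_t,I_t),\pi\right)\leq d_{\mathscr F}\!\left(\mathscr L(X_t,I_t),\mathscr L(X_{t-s},I_{t-s})P_s\right)+d_{\mathscr F}\!\left(\mathscr L(X_{t-s},I_{t-s})P_s,\pi\right),\]
bounds the first term by the accumulated local error over the window of length $s$, the second by $Ce^{-vs}$ via Proposition~\ref{proposition:ErgodicityEZZ}, and optimizes over $s$. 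Finally, functional convergence in the Skorokhod topology follows from the APT property combined with the convergence of initial distributions $\mathscr L(x_n,i_n)\to\pi$ and tightness, which is automatic since $E=\triangle\times\{1,\dots,D\}$ is compact; alternatively one invokes \cite[Th\'eor\`eme~4.II.4]{Duf96} directly.

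The main obstacle is the uniformity of the one-step estimate over the class $\mathscr F$: one must check that every remainder appearing in the Taylor expansion and the iteration through $P_s$ is controlled by the three quantities $\|f^{(N_1)}\|_\infty$, $\|\mathcal L_\text{Z}f\|_\infty$ and $\|\mathcal L_\text{Z}\mathcal L_\text{Z}f\|_\infty$ that define $\mathscr F$. This bookkeeping is exactly what \cite[Theorem~2.6]{BBC17} has been tailored to handle, and the assumption $p_n\sim a/n$ in Assumption~\ref{assumption:NonStandard} is what ensures the scaling matches; any different behavior of $p_n$ would produce a different limit generator, as discussed in Remark~\ref{rk:ConditionNonStandard}.
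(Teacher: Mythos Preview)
Your overall strategy—verify the one-step generator comparison and feed it into \cite[Theorem~2.6]{BBC17}—is the same as the paper's, and your Taylor computation is correct. However, you have skipped one of the three hypotheses that \cite[Theorem~2.6]{BBC17} requires, and it is not automatic.

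The missing ingredient is the \emph{regularity of the limit semigroup}: you must show that $P_t$ maps the class $\mathscr F$ (or at least $\mathscr C^{N_1}_b$) into itself with uniform control on the derivatives, i.e.\ $\|(P_tf)^{(j)}\|_\infty\leq C_T\|f^{(j)}\|_\infty$ for $j\leq N_1$. This is what allows the one-step estimate to be iterated along the semigroup; the bounds on $\|\mathcal L_\text{Z}f\|_\infty$ and $\|\mathcal L_\text{Z}\mathcal L_\text{Z}f\|_\infty$ in the definition of $\mathscr F$ do \emph{not} give this by themselves. The paper establishes it by differentiating the SDE \eqref{eq:SdeEZZ} with respect to the initial condition: since the drift $x\mapsto e_i-x$ is linear and the jump mechanism does not depend on $x$, one obtains $\partial_k(\X^{x,i}_t,\I^{x,i}_t)=(\e^{-t}e_k,0)$ and vanishing second derivatives, whence $\|(P_tf)^{(j)}\|_\infty\leq\|f^{(j)}\|_\infty$. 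Without this step your argument has a genuine gap, because for a generic PDMP the semigroup need not preserve $\mathscr C^2$.

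A minor point: for the functional convergence in Skorokhod space, \cite[Th\'eor\`eme~4.II.4]{Duf96} is the wrong reference—that is a diffusion-approximation result used in the \emph{standard} setting (Theorem~\ref{thm:Standard}). Here one should invoke \cite[Theorem~2.12]{BBC17}, which is designed for this situation; your compactness-and-tightness heuristic is in the right spirit but is precisely what that theorem packages.
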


The proof of Lemma~\ref{lem:APTNonStandard} consists in checking \cite[Assumptions~2.1, 2.2, 2.3 and 2.7.ii)]{BBC17} and relies on three ingredients:
\begin{itemize}
	\item Convergence of a kind of discrete infinitesimal generator $\mathcal L_n$ , which characterizes the dynamics of $(X,I)$, to $\mathcal L_\text{Z}$ defined in \eqref{eq:GeneratorEZZ}.
	\item Smoothness of the limit semigroup $(P_t)_{t\geq0}$ and control of its derivatives with respect to the initial condition of the process.
	\item Uniform boundedness of the moments of $(x_n,i_n)_{n\geq1}$ up to some order, which is trivially satisfied here since $E$ is compact.
\end{itemize}

\begin{proof}[Proof of Lemma~\ref{lem:APTNonStandard}]
In what follows, the notation $O$ (as $n\to+\infty$) is uniform over $x,i,f$. We define $\mathcal L_nf(x,i)=\gamma_{n+1}^{-1}\E[f(x_{n+1},i_{n+1}|x_n=x,i_n=i]$, and we study the convergence of $\mathcal L_n$ to $\mathcal L_\text{Z}$ in the sense of \cite{BBC17}. Let $(x,i)\in E$ and $\chi_{N_1}(x,i)=\prod_{k=1}^Dx_k^2$. We recall that $q_n(i,i)=1+O(p_n)$ as $n\to+\infty$. With
\[\epsilon_n=\frac1n\vee\max_{j\neq i}(|r_n(i,j)|),\]
we have
\begin{align*}
\mathcal L_nf(x,i)&=
\sum_{j\neq i} \frac{q_{n+1}(i,j)}{\gamma_{n+1}}\left[f\left(\frac{\gamma_{n+1}}{\gamma_{n}}x+\gamma_{n+1}e_j,j\right)-f(x,i)\right]\\
&\quad+ \frac{1-\sum_{j\neq i}q_{n+1}(i,j)}{\gamma_{n+1}}\left[f\left(\frac{\gamma_{n+1}}{\gamma_{n}}x+\gamma_{n+1}e_i,i\right)-f(x,i)\right]\\
&=
\sum_{j\neq i} \frac{p_{n+1}}{\gamma_{n+1} }(q(i,j) + r_{n+1}(i,j)) \left[f(x,j)-f(x,i)+\chi_{(1,\dots,1,0)}(x,i)\|f^{(1,\dots,1,0)}\|_\infty O\left(\gamma_{n+1}\right)\right]\\
&\quad+  \frac{ 1+ O(p_{n+1})}{\gamma_{n+1}}\left[\left(\left(\frac{\gamma_{n+1}}{\gamma_n}-1\right)+\gamma_{n+1}e_i\right)\cdot\nabla_xf(x)+\chi_{N_1}(x,i)\|f^{(N_1)}\|_\infty O\left(\gamma_{n+1}^2\right)\right]\\
&=\mathcal L_{\text Z}f(x,i)+\chi_{N_1}(x,i)\|f^{(N_1)}\|_\infty O\left(\epsilon_{n+1}\right).
\end{align*}
We turn to the study of the regularity of the limit semigroup, following \cite{Kun84}. Let $t>0$ and note that $\|P_tf\|_\infty\leq\|f\|_\infty$. Moreover, the process $(\X,\I)$ is solution of the following SDE (we emphasize below the dependence on the initial condition):
\begin{equation}
(\X^{x,i}_t,\I^{x,i}_t)=(x,i)+\int_0^t \left(A(\X^{x,i}_{s^-},\I^{x,i}_{s^-})+e_{\I^{x,i}_{s^-}}\right)ds+\sum_{j=1}^D\int_0^t B_{\I^{x,i}_{s^-},j}(\X^{x,i}_{s^-},\I^{x,i}_{s^-})N_{\I^{x,i}_{s^-},j}(ds),
\label{eq:ProofThmNonStandard}
\end{equation}
where $N_{i,j}$ is a Poisson process of intensity $aq(i,j)\indic_{\{i\neq j\}}$ and the matrices $A$ and $B_{i,j}$ are defined in \eqref{eq:SdeEzz2}. Then, if we denote by $\eta_t^{x,i,k,h}=h^{-1}\left[(\X^{x+he_k,i}_t,\I^{x+he_k,i}_t)-(\X^{x,i}_t,\I^{x,i}_t)\right]$, we recover from \eqref{eq:ProofThmNonStandard} that the process $\eta^{x,i,k,h}$ satisfies the ODE
\[\eta_t^{x,i,k,h}=(e_k,0)+\int_0^t A\eta_{s^-}^{x,i,k,h}ds,\]
so that $\eta_t^{x,i,k,h}=(\e^{-t}e_k,0)$. Thus, $\eta^{x,i,k,h}$ admits a continuous modification (notably at $h=0$) and $\partial_{k} (\X^{x,i},\I^{x,i})=(\e^{-t}e_k,0)$ is continuous. Using similar arguments, $\partial_{k}\partial_{l} (\X^{x,i},\I^{x,i})=0$. Gathering those expressions, and since $f^N$ is bounded for every multi-index $N\leq N_1$, it is clear that $P_tf\in\mathscr C^{N_1}$, with, for any $j,k\leq D$,
\begin{align*}
\partial_{j}(P_tf)(x,i)&=\E\left[\partial_{j}f(\X^{x,i}_t,\I^{x,i}_t)\right]\e^{-t},\\
\partial_{j}\partial_{k}(P_tf)(x,i)&=\E\left[\partial_{j}\partial_{k}f(\X^{x,i}_t,\I^{x,i}_t)\right]\e^{-2t}.
\end{align*}
Hence, for any $j\leq N_1,\|(P_tf)^{(j)}\|_\infty\leq\|f^{(j)}\|_\infty$. Finally, for any $n\geq1,|x_n|\leq1$, so that
\[\sup_{n\geq1}\chi_{N_1}(x_n,i_n)=\sup_{n\geq1}\sum_{k=0}^2|x_n|^k\leq 3.\]
Hence, we can apply \cite[Theorems~2.6 and 2.8.ii)]{BBC17} with $N_1=N_2=d_1=d=(2,\dots,2,0),C_T=1,M_2=3,$ to obtain the existence and the announced properties of $\pi$ as well as
\[\lim_{n\to+\infty}d_{\mathscr F}\left(\mathscr L(x_n,i_n),\pi\right)=0.\]

Moreover, following \cite[Remark~2.5]{BBC17},
\[\lambda(\gamma,\epsilon)=-\limsup_{n\to+\infty}\frac{\log(\gamma\vee\epsilon_n)}{\sum_{k=1}^n\gamma_k}\leq \frac\theta A.\]
Finally, using Proposition~\ref{proposition:ErgodicityEZZ} together with \cite[Theorem~2.8.ii)]{BBC17} entails \eqref{eq:ProofThmNonStandard1}. Recall the compactness of $E$, then we can apply \cite[Theorem~2.12]{BBC17} and achieve the proof.
\end{proof}

\subsection{ODE and SDE methods in the standard setting}
\label{sec:StandardOccupation}
In the present section, we successively provide proofs for Theorems~\ref{thm:StandardAS} and \ref{thm:Standard}. We shall prove the former with a method involving an asymptotic pseudotrajectory for some interpolated process, similarly to Section~\ref{sec:NonStandard} and \cite{BC15}. On the contrary, the fluctuations obtained for $(x_n)_{n\geq1}$ in Theorem~\ref{thm:Standard} are obtained through a more classic result for stochastic algorithms, namely the SDE method developed in \cite{Duf96} (see also \cite{KY03}).

\begin{proof}[Proof of Theorem~\ref{thm:StandardAS}]
In the following, we mimic the proof of \cite[Lemma~2.4]{BC15} (see also \cite{MP87,Ben97}). Indeed, for any $n\geq1$, \eqref{eq:DefXAlgoSto} writes
\[x_{n+1}=x_n+\gamma_{n+1}(\nu-x_n)+\gamma_{n+1}(e_{i_{n+1}}-\nu).\]
Let the sequence $(\tau_n)_{n\geq0}$ and the function $m$ be as in \eqref{eq:DefTauM}, and define the interpolated process
\[\widehat{X}_{\tau_n +s}0 = x_n + s \frac{x_{n+1}-x_n}{\tau_{n+1} - \tau_n},\]
for all $s\in [0, \gamma_{n+1})$ and $n\geq0$. We will show that $\widehat{X}$ is an asymptotic pseudotrajectory (and a $\ell-$pseudotrajectory) for the flow $\Phi(x,t) = \nu+e^{-t}(x- \nu)$ associated to the ODE $\partial_t\Phi(x,t)=\nu-\Phi(x,t)$. From \cite[Proposition~4.1]{Ben99} it suffices to show that, for all $T>0,$
\begin{equation}
\label{eq:cvdelta1}
\lim_{t \to+\infty} \Delta(t,T)=0, \quad \text{with }\Delta(t,T) = \sup_{0 \leq h \leq T} \left| \sum_{k=m(t)}^{m(t+h)} \gamma_k(e_{i_{k+1}}-\nu) \right|, 
\end{equation}
and
\begin{equation}
\label{eq:cvdelta2}
\lim_{t \to+\infty} \frac{\log(\Delta(t,T))}{t}\leq - \ell.
\end{equation}
Consider $h$ defined in \eqref{eq:DefH}. Then,
\begin{align}
\gamma_{n+1}\left(e_{i_{n+1}}-\nu\right)
&=\gamma_{n+1}\sum_{j=1}^D q(i_{n+1},j) \left(h_{i_{n+1}}- h_j\right)\notag\\
&=\frac{\gamma_{n+1}}{p_n}  \sum_{j=1}^D \left(p_nq(i_n,j) -q_n(i_n,j)\right)h_{i_{n+1}} + \frac{\gamma_{n+1}}{p_n}\left(h_{i_{n+1}}-\E\left[h_{i_{n+1}}|i_{n}\right]\right) \notag\\
&\quad+\frac{\gamma_{n+1}}{p_n}\sum_{j=1}^D\left(q_n(i_n,j)-p_nq(i_n,j)\right)h_j \notag \\
&\quad+\left(\gamma_{n+1}\sum_{j=1}^Dq(i_n,j)h_j-\gamma_n\sum_{j=1}^Dq(i_n,j)h_j\right) \notag\\
&\quad+\left(\gamma_n\sum_{j=1}^Dq(i_n,j)h_j-\gamma_{n+1}\sum_{j=1}^Dq(i_{n+1},j)h_j\right).
\label{eq:proofOccupation1}
\end{align}
We shall bound each term of the sum \eqref{eq:proofOccupation1} separately. We easily have
\[\left|\gamma_{n+1}\sum_{j=1}^D\left(q_n(i_n,j)-p_nq(i_n,j)\right)\frac{h_j}{p_n}\right|=\left|\gamma_{n+1}\sum_{j=1}^D r_n (i_n,j)h_j\right| \leq \Vert h \Vert_1 R_n \gamma_{n+1}\]
and
$$
\left|\gamma_{n+1}\sum_{j=1}^D  \left(p_nq(i_n,j) -q_n(i_n,j)\right)h_{i_{n+1}} \right|=\left|\gamma_{n+1} h_{i_{n+1}} \sum_{j=1}^D r_n(i_{n+1},j) \right| \leq \Vert h \Vert_1 R_n\gamma_{n+1},  
$$
where $\Vert h \Vert_1 = \sup_j \sum_i |h_{i,j}|$ and $ R_n= \sup_i \sum_j |r_n(i,j)|.$
Also, for some constant $C>0$,
\[\left|\gamma_{n+1}\sum_{j=1}^Dq(i_n,j)h_j-\gamma_n\sum_{j=1}^Dq(i_n,j)h_j\right|\leq C(\gamma_n-\gamma_{n+1}).\]
Note that $(\gamma_n\sum_{j=1}^Dq(i_n,j)h_j-\gamma_{n+1}\sum_{j=1}^Dq(i_{n+1},j)h_j)$ is the main term of a telescoping series. It remains to bound the norm of the sum of $\gamma_{n+1}p_n^{-1} \left(h_{i_{n+1}}-\E[h_{i_{n+1}}|i_{n+1}]\right)$. For all $m,n\geq 1$ and $l=1,...,D$, set 
$$
M_{m,n}(l)=\sum_{k=m}^n\frac{\gamma_{k+1}}{p_k} \left(h_{l,i_{k+1}}-\E\left[h_{l,i_{k+1}}|i_{k+1}\right]\right) .
$$
The sequence $(M_{m,n}(l))_{m\geq n}$ is a martingale and 
\begin{align*}
\E\left[M_{m,n}(l)M_{m,n}(c) \right]
=\sum_{k=m}^n\frac{\gamma_{k+1}^2}{p_k^2}\E\left[\sum_{j=1}^D q_k(i_k,j) (h_{l,j} - \mathbb{E}[h_{l,i_{k+1}} | i_k])(h_{c,j} - \mathbb{E}[h_{c,i_{k+1}} | i_k]) \right].
\end{align*}
Moreover, as 
\begin{align*}
&\E\left[\sum_{j=1}^D q_k(i_k,j) (h_{l,j} - \mathbb{E}[h_{l,i_{k+1}} | i_k])(h_{c,j} - \mathbb{E}[h_{c,i_{k+1}} | i_k]) \right]\\
=&\E\left[\sum_{j=1}^D q_k(i_k,j) (h_{l,j} h_{c,j} - \mathbb{E}[h_{l,i_{k+1}} | i_k] \mathbb{E}[h_{c,i_{k+1}} | i_k]) \right]\\
=&p_k \E\left[\sum_{j\neq i_k} q(i_k,j) h_{l,j} h_{c,j} \right] + \E\left[ \left( 1 - p_k \sum_{j\neq i_k} q(i_k,j) \right) h_{l,i_k} h_{c,i_k}  \right]\\
&- \E\left[ \left(\sum_{j=1}^D q_k(i_k,j) h_{l,j} \right) \left(\sum_{j=1}^D q_k(i_k,j) h_{c,j} \right) \right] +o(p_k) \\
=&p_k \E\left[\sum_{j\neq i_k} q(i_k,j) (h_{l,j} - h_{l,i_k})(h_{c,j} - h_{c,i_k)} ) \right] \\
&+ \E\left[ p_k^2  \left(\sum_{j\neq i_k} q(i_k,j) (h_{l,i_k} - h_{l,j})\right) \left( \sum_{j\neq i_k} q(i_k,j) (h_{c,j} - h_{c,i_k}) \right) \right] +o(p_k), 
\end{align*}
by Theorem~\ref{thm:StandardAS}, we obtain
\begin{align}
&\E\left[\sum_{j=1}^D q_k(i_k,j) (h_{l,j} - \mathbb{E}[h_{l,i_{k+1}} | i_k])(h_{c,j} - \mathbb{E}[h_{c,i_{k+1}} | i_k]) \right]\notag\\
&=p_k \sum_{i=1}^D \nu_i \left[\sum_{j=1}^D q(i,j) (h_{l,j} - h_{l,i})(h_{c,j} - h_{c,i)} ) \right]\notag\\
&\quad- p_k^2 \sum_{i=1}^D \nu_i \left(\nu_l-\indic_{i=l}\right) \left( \nu_c-\indic_{i=c} \right)  +o(p_k).\label{eq:thebrackets}
\end{align}
As a consequence of \eqref{eq:thebrackets}, there exists some constant $C>0$ such that
\[\sum_{l=1}^D \E\left[M_{m,n}(l)^2\right]\leq C \sum_{k=m}^n \frac{\gamma_{k+1}^2}{p_k}.\]
By Doob's inequality and Assumption~\ref{assumption:Standard}, it follows that, for every $k\geq 0$,
\begin{align*}
\mathbb{E}\left[ \sup_{0\leq h \leq T} |M_{m(kT), m(kT +h)}|\right] 
&\leq  2 \sum_{l=1}^D \mathbb{E}\left[|M_{m(kT), m((k+1)T)}(l)|^2\right]\leq 2C \sum_{j = m(kT)}^{m((k+1)T)} \gamma_{j+1}\frac{\gamma_{j+1}}{p_j}\\
&\leq 2CT \sup_{j\geq m(kT)} \frac{\gamma_{j+1}}{p_{j}},
\end{align*}
which implies that $\lim_{k \to+\infty}  \sup_{0\leq h \leq T} |M_{m(kT), m(kT +h)}|= 0$ and then $\lim_{k \to+\infty} \Delta(kT,T) =0$ in probability. By the triangle inequality and \cite[Proposition~4.1]{Ben99}, \eqref{eq:cvdelta1} holds.

Under the assumption that $\sum_{n=1}^\infty\gamma_{n+1}^2p_n^{-1} <+ \infty,$
\begin{align*}
\mathbb{E}\left[ \sum_{k\geq 0} \sup_{0\leq h \leq T} |M_{m(kT), m(kT +h)}|\right] 
&\leq \sum_{k\geq 0} 2 \sum_{l=1}^D \mathbb{E}\left[|M_{m(kT), m((k+1)T)}(l)|^2\right]
\leq 2C \sum_{k \geq m(T)} \frac{\gamma_{k+1}^2}{p_k} < + \infty,
\end{align*}
which implies $\lim_{k \to+\infty}  \sup_{0\leq h \leq T} |M_{m(kT), m(kT +h)}|= 0$ a.s. Then, $\lim_{k \to+\infty} \Delta(kT,T) =0$ a.s. and $\lim_{t \to+\infty} \Delta(t,T) =0$ since
$$
\Delta(t,T) \leq 2\Delta( \lfloor t/T \rfloor T, T) + \Delta( (\lfloor t/T \rfloor +1) T, T).
$$

In order to obtain a $\ell$-pseudotrajectory, use Markov's and Doob's inequalities so that
$$
\mathbb{P}\left( \sup_{0\leq h \leq T} |M_{m(kT), m(kT +h)}| \geq e^{-kT \alpha} \right) \leq e^{k T \alpha } 2C \sum_{j=m(kT)}^{m\left((k+1)T\right)} \gamma_{j+1}\frac{\gamma_{j+1}}{p_j} \leq 2C(T+1)  e^{k T \alpha } \sup_{j\geq m(kT)} \frac{\gamma_{j+1}}{p_j} .
$$
Now, for all $\varepsilon>0$ and $k$ large enough,
\[\sup_{j\geq m(kT)} \frac{\gamma_{j+1}}{p_j} \leq \exp\left(- (\lambda(\gamma, \gamma/p))- \varepsilon) \tau_{m(kT)}\right) \leq \exp\left(- (\lambda(\gamma, \gamma/p))- \varepsilon) kT\right),\]
where $\lambda(\gamma,\gamma/p)$ is defined in \eqref{eq:DefLambdaGammaEpsilon}. Hence,
$$
\mathbb{P}\left( \sup_{0\leq h \leq T} |M_{m(kT), m(kT +h)}| \geq e^{-kT \alpha} \right) \leq 2C(T+1)  \exp\left(k T (\alpha - \lambda(\gamma, \gamma/p) + \varepsilon) \right),
$$
and by the Borel-Cantelli lemma, we have
$$
\limsup_{k \to+\infty}\frac{1}{k} \sup_{0\leq h \leq T} |M_{m(kT), m(kT +h)}| \leq - \lambda(\gamma, \gamma/p)\text{ a.s.}
$$
Then, bounding all the other terms of \eqref{eq:proofOccupation1}, we find
$$
\lim_{t \to+\infty} \frac{\log(\Delta(t,T)}{t}\leq - \ell
$$
with
$$
\ell = \min\Big(\lambda(\gamma, \gamma/p), \lambda(\gamma,\gamma), \lambda(\gamma, R )\Big)= \lambda(\gamma, \gamma/p)\wedge \lambda(\gamma, R).
$$
Since the flow $\Phi$ converges to $\nu$ exponentially fast at rate $1$, use \cite[Theorem~6.9 and Lemma~8.7]{Ben99} to achieve the proof.
\end{proof}

\begin{proof}[Proof of Theorem~\ref{thm:Standard}]
We have
$$
y_{n+1} = y_n + y_n \left( \frac{\alpha_{n+1}}{\alpha_n}(1-\gamma_{n+1}) - 1 \right) + \gamma_{n+1} \alpha_{n+1} (e_{i_{n+1}} - \nu).
$$
Recall \eqref{eq:proofOccupation1}, so that 
$$
\gamma_{n+1} (e_{i_{n+1}} - \nu) = \frac{\gamma_{n+1}}{p_n} (h_{i_{n+1}} - \mathbb{E}[h_{i_{n+1}}|i_n]) + b_n,
$$
with a remainder term $b_n$ converging to 0. Now, we want to use \cite[Th\'{e}or\`{e}me~4.II.4]{Duf96}. In our setting, its notation reads
$$
y_{n+1} = y_n + \widehat{\gamma}_n \left( h(y_n) + \widehat{r}_{n+1} \right) + \sqrt{\widehat{\gamma}_n} \epsilon_{n+1},
$$
with
$$
\epsilon_{n+1}= \left(\frac{1+\Upsilon}{2} \right)^{-1/2}  \frac{1}{\sqrt{p_n}} (h_{i_{n+1}} - \mathbb{E}[h_{i_{n+1}}|i_n]) , \quad  \widehat{\gamma}_{n+1}= \left(\frac{1+\Upsilon}{2} \right) \frac{\gamma_{n+1}^2 \alpha_{n+1}^2}{p_n},\quad  h:z\mapsto -  z,
$$
and
\begin{align*}
\widehat{r}_{n+1}
&= y_n \frac{1}{\widehat{\gamma}_{n+1}} \left( \frac{\alpha_{n+1}}{\alpha_n}(1-\gamma_{n+1}) - 1  +  \widehat{\gamma}_{n+1} \left(\frac{1+\Upsilon}{2} \right)  \right)\\
&\quad + \frac{\alpha_{n+1}}{ \widehat{\gamma}_{n+1}} \frac{\gamma_{n+1}}{p_n}  \sum_{j=1}^D r_n(i_n,j)(h_{i_{n+1}} -h_j) \notag \\
&\quad+\frac{\alpha_{n+1}}{ \widehat{\gamma}_{n+1}} \gamma_{n+1} \left(\sum_{j=1}^Dq(i_n,j)h_j-\sum_{j=1}^Dq(i_{n+1},j)h_j\right).
\end{align*}
Then, by \eqref{eq:thebrackets} and similar computations,
$$
\mathbb{E}[\epsilon_n | \mathcal{F}_n] = 0 \quad \mathbb{E}[\epsilon_n \epsilon_n^\top | \mathcal{F}_n] =  \Sigma^{(p,\Upsilon)} +o(p_n), \quad \sup_{n \geq 1} \mathbb{E} \left[ |\epsilon_n |^q \right] < + \infty, \ q>2,
$$
where $\Sigma^{(p,\Upsilon)}$ is defined in \eqref{eq:DefSigma}. Classically, we should prove that $\lim_{n\to+\infty}\| \widehat{r}_n \|=0$, in order to work in the framework of \cite[Hypoth\`ese~H4-4]{Duf96}, which is quite difficult. Nevertheless, rather than checking that $\lim_{n\to+\infty}\| \widehat{r}_n \| = 0$ it is sufficient\footnote{This assertion can be easily checked at the end of \cite[p.156]{Duf96}, whose proof is based on usual arguments on diffusion approximation, such as \cite{EK86}. The decomposition \eqref{eq:proofThmStandard1} is often assumed in more recent generalizations, see for instance \cite{For15}. Note that we cannot use directly \cite{For15}, which besides does not provide functional convergence.} to prove that
\begin{equation}
\widehat{r}_n= \widehat{r}^{(1)}_n + \widehat{r}^2_n,\quad\lim_{n\to+\infty}\widehat{r}^{(1)}_n=0,\quad \lim_{n\to+\infty}\mathbb{E} \left[\sup_{0 \leq s \leq T} \left|\sum_{n=m(t)}^{m(t+s)} \widehat{\gamma}_{n+1} \widehat{r}^{(2)}_{n+1} \right| \right] = 0,\label{eq:proofThmStandard1}
\end{equation}
for any $T>0$, where $m(t)$ is defined in \eqref{eq:DefTauM}. Then, let
\begin{align*}
\widehat{r}^{(1)}_{n+1}
&= y_n \frac{1}{\widehat{\gamma}_{n+1}} \left( \frac{\alpha_{n+1}}{\alpha_n}(1-\gamma_{n+1}) - 1  +  \widehat{\gamma}_{n+1} \left(\frac{1+\Upsilon}{2} \right) \right)
+ \frac{\alpha_{n+1}}{ \widehat{\gamma}_{n+1}} \frac{\gamma_{n+1}}{p_n}  \sum_{j=1}^D r_n(i_n,j)(h_{i_{n+1}} -h_j),\\
\widehat{r}^{(2)}_{n+1}
&= \frac{\alpha_{n+1}}{ \widehat{\gamma}_{n+1}} \gamma_{n+1} \left(\sum_{j=1}^Dq(i_n,j)h_j-\sum_{j=1}^Dq(i_{n+1},j)h_j\right).
\end{align*}
The sequence $(\widehat{r}^{(1)}_n)_{n\geq1}$ goes to $0$ a.s. and in $L^1$ straightforwardly under our assumptions. Furthermore
\begin{align}
\widehat{\gamma}_{n+1} \widehat{r}^{(2)}_{n+1}
&= \alpha_{n+1} \gamma_{n+1} \sum_{j=1}^Dq(i_n,j)h_j-\alpha_{n+2} \gamma_{n+2} \sum_{j=1}^Dq(i_{n+1},j)h_j\notag\\
&\quad + (\alpha_{n+2} \gamma_{n+2} - \alpha_{n+1} \gamma_{n+1}) \sum_{j=1}^Dq(i_{n+1},j)h_j.
\label{eq:proofThmStandard2}
\end{align}
The first line of \eqref{eq:proofThmStandard2} is a telescoping series and is bounded by $\alpha_n \gamma_{n+1}$ which goes to $0$. The second line of \eqref{eq:proofThmStandard2} is bounded by,
\begin{equation}
C \sum_{n= m(t)}^{m(t+T)} \left| \alpha_{n+2} \gamma_{n+1} - \alpha_{n+1} \gamma_{n} \right|,
\label{eq:proofThmStandard3}
\end{equation}
for some $C>0$. Since \eqref{eq:proofThmStandard2} is a telescoping series as well, and goes to $0$, we established the announced decomposition \eqref{eq:proofThmStandard1}. As a conclusion, the diffusive limit $(\Y_t)_{t\geq0}$ is the solution of \eqref{eq:SdeOU}, which trivially  admits $V:z\mapsto z$ as a Lyapunov function, as required in  \cite[Hypoth\`ese~H4-3]{Duf96}. The only use of an assumption on the eigenelements of $\Sigma^{(p,\Upsilon)}$ would be to guaranty the existence, uniqueness of and convergence to an invariant distribution for $\Y$, which was already proved in Proposition~\ref{proposition:ErgodicityOU}.
\end{proof}

\begin{acknowledgements}
	Both authors acknowledge financial support from the ANR PIECE (ANR-12-JS01-0006-01) and the Chaire Mod\'elisation Math\'ematique et Biodiversit\'e.
\end{acknowledgements}

\bibliography{Biblio}

\newcommand{\etalchar}[1]{$^{#1}$}
\begin{thebibliography}{BLBMZ15}

\bibitem[BBC16]{BBC17}
M.~{Bena{\"i}m}, F.~{Bouguet}, and B.~{Cloez}.
\newblock {Ergodicity of inhomogeneous {M}arkov chains through asymptotic
  pseudotrajectories}.
\newblock {\em ArXiv e-prints}, January 2016.

\bibitem[BC15]{BC15}
M.~Bena{\"{\i}}m and B.~Cloez.
\newblock A stochastic approximation approach to quasi-stationary distributions
  on finite spaces.
\newblock {\em Electron. Commun. Probab.}, 20:no. 37, 14, 2015.

\bibitem[BD16]{BD16}
J.~{Bierkens} and A.~{Duncan}.
\newblock {Limit theorems for the Zig-Zag process}.
\newblock {\em ArXiv e-prints}, July 2016.

\bibitem[Ben97]{Ben97}
M.~Bena{\"{\i}}m.
\newblock Vertex-reinforced random walks and a conjecture of {P}emantle.
\newblock {\em Ann. Probab.}, 25(1):361--392, 1997.

\bibitem[Ben99]{Ben99}
M.~Bena{\"{\i}}m.
\newblock Dynamics of stochastic approximation algorithms.
\newblock In {\em S\'eminaire de {P}robabilit\'es, {XXXIII}}, volume 1709 of
  {\em Lecture Notes in Math.}, pages 1--68. Springer, Berlin, 1999.

\bibitem[BH96]{BH96}
M.~Bena{\"{\i}}m and M.~W. Hirsch.
\newblock Asymptotic pseudotrajectories and chain recurrent flows, with
  applications.
\newblock {\em J. Dynam. Differential Equations}, 8(1):141--176, 1996.

\bibitem[BH12]{BH12}
Y.~Bakhtin and T.~Hurth.
\newblock Invariant densities for dynamical systems with random switching.
\newblock {\em Nonlinearity}, 25(10):2937--2952, 2012.

\bibitem[BLBMZ12]{BLBMZ12}
M.~Bena{\"{\i}}m, S.~Le~Borgne, F.~Malrieu, and P.-A. Zitt.
\newblock Quantitative ergodicity for some switched dynamical systems.
\newblock {\em Electron. Commun. Probab.}, 17:no. 56, 14, 2012.

\bibitem[BLBMZ15]{BLBMZ15}
M.~Bena{\"{\i}}m, S.~Le~Borgne, F.~Malrieu, and P.-A. Zitt.
\newblock Qualitative properties of certain piecewise deterministic {M}arkov
  processes.
\newblock {\em Ann. Inst. Henri Poincar\'e Probab. Stat.}, 51(3):1040--1075,
  2015.

\bibitem[BMP{\etalchar{+}}15]{BMPPR15}
F.~Bouguet, F.~Malrieu, F.~Panloup, C.~Poquet, and J.~Reygner.
\newblock {Long time behavior of {M}arkov processes and beyond}.
\newblock {\em ESAIM Proc. Surv.}, 51:193--211, 2015.

\bibitem[CH15]{CH15}
B.~Cloez and M.~Hairer.
\newblock Exponential ergodicity for {M}arkov processes with random switching.
\newblock {\em Bernoulli}, 21(1):505--536, 2015.

\bibitem[CK06]{CK06}
C.~Costantini and T.~G. Kurtz.
\newblock Diffusion approximation for transport processes with general
  reflection boundary conditions.
\newblock {\em Math. Models Methods Appl. Sci.}, 16(5):717--762, 2006.

\bibitem[Dav93]{Dav93}
M.~H.~A. Davis.
\newblock {\em Markov Models \& Optimization}, volume~49.
\newblock CRC Press, 1993.

\bibitem[Dob53]{Dob53}
R.~L. Dobru\v{s}in.
\newblock Limit theorems for a {M}arkov chain of two states.
\newblock {\em Izvestiya Akad. Nauk SSSR. Ser. Mat.}, 17:291--330, 1953.

\bibitem[DS07]{DS07}
Z.~Dietz and S.~Sethuraman.
\newblock Occupation laws for some time-nonhomogeneous {M}arkov chains.
\newblock {\em Electron. J. Probab.}, 12:no. 23, 661--683, 2007.

\bibitem[Duf96]{Duf96}
M.~Duflo.
\newblock {\em Algorithmes stochastiques}, volume~23 of {\em Math\'ematiques \&
  Applications (Berlin) [Mathematics \& Applications]}.
\newblock Springer-Verlag, Berlin, 1996.

\bibitem[EK86]{EK86}
S.~N. Ethier and T.~G. Kurtz.
\newblock {\em Markov processes}.
\newblock Wiley Series in Probability and Mathematical Statistics: Probability
  and Mathematical Statistics. John Wiley \& Sons, Inc., New York, 1986.
\newblock Characterization and convergence.

\bibitem[EV16]{EV16}
J.~{Engländer} and S.~{Volkov}.
\newblock {Turning a coin over instead of tossing it}.
\newblock {\em ArXiv e-prints}, June 2016.

\bibitem[FGM12]{FGM12}
J.~Fontbona, H.~Gu{\'e}rin, and F.~Malrieu.
\newblock Quantitative estimates for the long-time behavior of an ergodic
  variant of the telegraph process.
\newblock {\em Adv. in Appl. Probab.}, 44(4):977--994, 2012.

\bibitem[For15]{For15}
G.~Fort.
\newblock Central limit theorems for stochastic approximation with controlled
  {M}arkov chain dynamics.
\newblock {\em ESAIM Probab. Stat.}, 19:60--80, 2015.

\bibitem[Gan59]{Gan59}
F.~R. Gantmacher.
\newblock {\em The {T}heory of {M}atrices}, volume~2.
\newblock Chelsea, New York, 1959.

\bibitem[Gou97]{Gou97}
R.~Gouet.
\newblock Strong convergence of proportions in a multicolor {P}\'{o}lya urn.
\newblock {\em J. Appl. Probab.}, 34(2):426--435, 1997.

\bibitem[IW89]{IW89}
N.~Ikeda and S.~Watanabe.
\newblock {\em Stochastic differential equations and diffusion processes},
  volume~24 of {\em North-Holland Mathematical Library}.
\newblock North-Holland Publishing Co., Amsterdam; Kodansha, Ltd., Tokyo,
  second edition, 1989.

\bibitem[Jon04]{Jon04}
G.~L. Jones.
\newblock On the {M}arkov chain central limit theorem.
\newblock {\em Probab. Surv.}, 1:299--320, 2004.

\bibitem[Kol11]{Kol11}
V.~N. Kolokoltsov.
\newblock {\em Markov processes, semigroups and generators}, volume~38 of {\em
  De Gruyter Studies in Mathematics}.
\newblock Walter de Gruyter \& Co., Berlin, 2011.

\bibitem[Kun84]{Kun84}
H.~Kunita.
\newblock Stochastic differential equations and stochastic flows of
  diffeomorphisms.
\newblock In {\em \'{E}cole d'\'et\'e de probabilit\'es de {S}aint-{F}lour,
  {XII}---1982}, volume 1097 of {\em Lecture Notes in Math.}, pages 143--303.
  Springer, Berlin, 1984.

\bibitem[KY03]{KY03}
H.~Kushner and G.~Yin.
\newblock {\em Stochastic approximation and recursive algorithms and
  applications}, volume~35.
\newblock Springer, 2003.

\bibitem[MP87]{MP87}
M.~M\'etivier and P.~Priouret.
\newblock Th\'eor\`emes de convergence presque sure pour une classe
  d'algorithmes stochastiques \`a pas d\'ecroissant.
\newblock {\em Probab. Theory Related Fields}, 74(3):403--428, 1987.

\bibitem[Pel12]{Pel12}
M.~Peligrad.
\newblock Central limit theorem for triangular arrays of non-homogeneous
  {M}arkov chains.
\newblock {\em Probab. Theory Related Fields}, 154(3-4):409--428, 2012.

\bibitem[SC97]{Sal97}
L.~Saloff-Coste.
\newblock Lectures on finite {M}arkov chains.
\newblock In {\em Lectures on probability theory and statistics
  ({S}aint-{F}lour, 1996)}, volume 1665 of {\em Lecture Notes in Math.}, pages
  301--413. Springer, Berlin, 1997.

\bibitem[SV05]{SV05}
S.~Sethuraman and S.~R.~S. Varadhan.
\newblock A martingale proof of {D}obrushin's theorem for non-homogeneous
  {M}arkov chains.
\newblock {\em Electron. J. Probab.}, 10:no. 36, 1221--1235, 2005.

\end{thebibliography}

\end{document}